\definecolor{bgcolor}{rgb}{0.76,0.88,0.50}
\definecolor{bgcolor0}{rgb}{0.93,0.99,1}
\definecolor{bgcolor1}{rgb}{0.8,1,1}
\definecolor{bgcolor2}{rgb}{0.8,1,0.8}
\definecolor{bgcolor3}{rgb}{0.50,0.90,0.50}
\definecolor{mydarkgreen}{rgb}{39,130,67}
\definecolor{mydarkred}{rgb}{192,25,25}
\newcommand{\norm}[1]{\left\| #1 \right\|}
\newcommand{\inp}[2]{\left\langle#1,#2\right\rangle} 
\newcommand{\R}{\mathbb{R}} 
\newcommand{\Exp}[1]{{\rm E}\left[#1\right]}
\newcommand{\ExpSub}[2]{{\rm E}_{#1}\left[#2\right]}
\newcommand{\cC}{\mathcal{C}}
\newcommand{\cO}{\mathcal{O}}
\theoremstyle{plain}
\newtheorem{theorem}{Theorem}[section]
\newtheorem{proposition}[theorem]{Proposition}
\newtheorem{lemma}[theorem]{Lemma}
\theoremstyle{definition}
\newtheorem{definition}[theorem]{Definition}
\newtheorem{assumption}[theorem]{Assumption}
\theoremstyle{remark}
\newcommand{\eqdef}{:=} 
\newcommand{\vast}{\bBigg@{4}}
\newcommand{\squeeze}{}
\definecolor{mydarkgreen}{RGB}{39,130,67}
\definecolor{mydarkorange}{RGB}{39,130,67}
\definecolor{mydarkred}{RGB}{192,47,25}
\newcommand{\orange}{\color{mydarkorange}}
\newcommand{\algnamebig}[1]{{\orange \sf #1}}
\newcommand{\algname}[1]{{\orange\small \sf #1}}
\newcommand{\algnamesmall}[1]{{\orange\scriptsize \sf #1}}
\newcommand{\wtos}{\textnormal{w}}
\newcommand{\stow}{\textnormal{s}}
\icmltitlerunning{EF21-P and Friends}
\begin{document}

\twocolumn[
\icmltitle{EF21-P and Friends: Improved Theoretical Communication Complexity \\ for Distributed Optimization with Bidirectional Compression}



\icmlsetsymbol{zzz}{*}

\begin{icmlauthorlist}
\icmlauthor{Kaja Gruntkowska}{zzz,yyy}
\icmlauthor{Alexander Tyurin}{yyy}
\icmlauthor{Peter Richt\'{a}rik}{yyy}
\end{icmlauthorlist}

\icmlaffiliation{yyy}{King Abdullah University of Science and Technology, Thuwal, Saudi Arabia}

\icmlcorrespondingauthor{Alexander Tyurin}{alexandertiurin@gmail.com}


\vskip 0.3in
]


\printAffiliationsAndNotice{\textsuperscript{*}The work of Kaja Gruntkowska was performed during a Summer research internship in the Optimization and Machine Learning Lab at KAUST led by Peter Richt\'{a}rik. Kaja Gruntkowska is an undergraduate student at the University of Warwick, United Kingdom.} 

\begin{abstract}
    
    In this work we focus our attention on distributed optimization problems in the context where the communication time between the server and the workers is non-negligible. We obtain novel methods supporting bidirectional compression (both from the server to the workers and vice versa) that enjoy new state-of-the-art theoretical communication complexity for convex and nonconvex problems. Our bounds are the first that manage to decouple the  variance/error coming from the workers-to-server and server-to-workers compression, transforming a multiplicative dependence to an additive one. Moreover, in the convex regime, we obtain the first bounds that match the theoretical communication complexity of gradient descent. Even in this convex regime, our algorithms work with biased gradient estimators, which is non-standard and requires new proof techniques that may be of  independent interest. Finally, our theoretical results are corroborated through suitable experiments.
\end{abstract}

\section{Distributed Optimization and Bidirectional Compression}
In this paper, we consider distributed optimization problems in strongly convex, convex and nonconvex settings. Such problems arise in federated learning \citep{konevcny2016federated, mcmahan2017communication} and in deep learning \citep{ramesh2021zero}. In federated learning, a large number of workers/devices/nodes contain local data and communicate with a parameter-server that performs optimization of a function in a distributed fashion \citep{ramaswamy2019federated}. Due to privacy concerns and the potentially large number of workers, the communication between the workers and the server is a bottleneck and requires specialized algorithms capable of reducing the communication overhead. Popular algorithms dealing with these kinds of problems are based on communication compression \citep{DIANA,EF21,tang2019doublesqueeze}. 

We consider the distributed optimization problem 
\begin{align}
   \label{eq:main_task}
 \squeeze  \min \limits_{x \in \R^d} \left\{f(x) \eqdef \frac{1}{n} \sum\limits_{i=1}^n f_i(x)\right\},
\end{align}
where $n$ is the number of workers, and $f_i \,:\, \R^d \rightarrow \R$ are smooth (possibly nonconvex) functions for all $i \in [n] \eqdef \{1, \dots, n\}.$ We assume that the functions $f_i$ are stored on $n$ workers. Each of them is directly connected to a server that orchestrates the work of the devices \citep{kairouz2021advances}, i.e., the workers perform some calculations and send the results to the server, after which the server does calculations and sends the results back to the workers and the whole process repeats. 

\begin{figure}
\centering
        \includegraphics[width=0.6\linewidth]{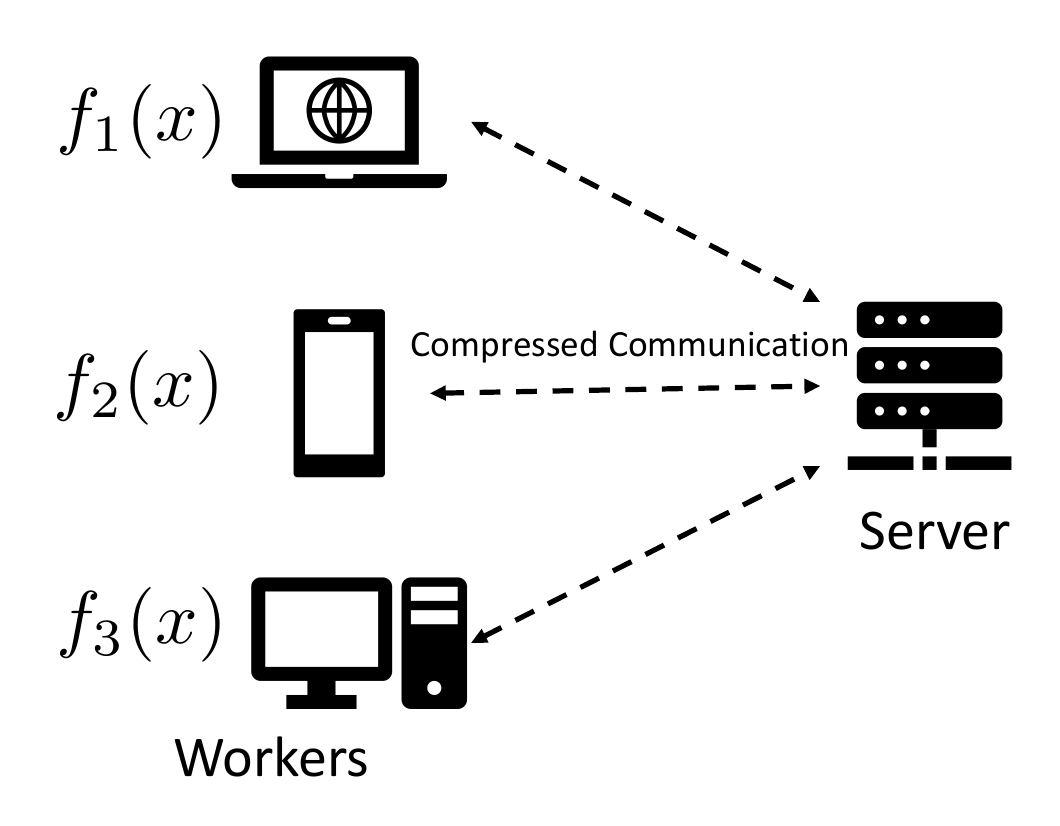}
        \caption{Distributed optimization with bidirectionally compressed communication.}
\end{figure}

\subsection{Assumptions}
Throughput the work we will refer to a subset of these assumptions:
\begin{assumption}
    \label{ass:lipschitz_constant}
    The function $f$ is $L$--smooth, i.e., $$\norm{\nabla f(x) - \nabla f(y)} \leq L \norm{x - y}, \quad \forall x, y \in \R^d.$$
 \end{assumption}
 \begin{assumption}
    \label{ass:workers_lipschitz_constant}
    The functions $f_i$ are $L_i$--smooth for all $i \in [n]$, i.e.,
    $$\norm{\nabla f_i(x) - \nabla f_i(y)} \leq L_i \norm{x - y}, \quad \forall x, y \in \R^d.$$
Let     $L_{\max} \eqdef \max_{i \in [n]} L_i.$    Further,  let $\widehat{L}^2$ be a constant such that
    $$\frac{1}{n} \sum_{i=1}^n \norm{\nabla f_i(x) - \nabla f_i(y)}^2 \leq \widehat{L}^2 \norm{x - y}^2, \forall x, y \in \R^d.$$
    
 \end{assumption}
 Note that if the functions $f_i$ are $L_i$--smooth for all $i \in [n],$ then there exists $\widehat{L}$ such that $\widehat{L} \leq L_{\max}.$
 \begin{assumption}
    \label{ass:convex}
    The functions $f_i$ are convex for all $i \in [n]$. Further, the function $f$ is $\mu$-strongly convex with $\mu \geq 0$, and attains a minimum at some point $x^* \in \R^d.$  
 \end{assumption}
 
     To avoid ambiguity, the constants $L$, $\widehat{L},$ and $L_i$ are the smallest such numbers.

 \begin{restatable}{lemma}{LEMMALIPTCONSTANTS}
    \label{lemma:lipt_constants}
If Assumptions~\ref{ass:lipschitz_constant}, \ref{ass:workers_lipschitz_constant} and \ref{ass:convex} hold, then $\widehat{L} \leq L_{\max} \leq n L$ and $L \leq \widehat{L} \leq \sqrt{L_{\max} L}.$
\end{restatable}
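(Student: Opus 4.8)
The plan is to establish the four inequalities in the chain $\widehat{L} \leq L_{\max} \leq nL$ and $L \leq \widehat{L} \leq \sqrt{n}\,L$ one at a time, reading each as a statement about the \emph{smallest} admissible constant. Two of them are elementary averaging facts, while the two inequalities involving $nL$ rely crucially on convexity (Assumption~\ref{ass:convex}); without it the individual $f_i$ could cancel and $L_{\max}$ would be uncontrollable in terms of $L$.

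The easy pair comes first. For $\widehat{L} \leq L_{\max}$, I would bound, for arbitrary $x,y$,
$\frac1n\sum_i \norm{\nabla f_i(x)-\nabla f_i(y)}^2 \leq \frac1n\sum_i L_i^2\norm{x-y}^2 \leq L_{\max}^2\norm{x-y}^2$,
so $L_{\max}$ is itself an admissible value of $\widehat{L}$, whence the smallest such value is no larger. For $L \leq \widehat{L}$, I would use $\nabla f = \frac1n\sum_i\nabla f_i$ together with the triangle inequality and Cauchy--Schwarz (Jensen):
$\norm{\nabla f(x)-\nabla f(y)} \leq \frac1n\sum_i\norm{\nabla f_i(x)-\nabla f_i(y)} \leq \left(\frac1n\sum_i\norm{\nabla f_i(x)-\nabla f_i(y)}^2\right)^{1/2} \leq \widehat{L}\norm{x-y}$,
showing $\widehat{L}$ is an admissible value of $L$.

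The main step is $L_{\max} \leq nL$, and I expect it to be the only genuine obstacle. Here I would invoke the standard characterization that, for a convex differentiable function $\phi$, Lipschitzness of $\nabla\phi$ with constant $L$ is equivalent to the monotonicity bound $\langle\nabla\phi(x)-\nabla\phi(y),\,x-y\rangle \leq L\norm{x-y}^2$. Applying the forward direction to $f$ (convex and $L$-smooth) and expanding $\nabla f=\frac1n\sum_j\nabla f_j$ gives $\frac1n\sum_j\langle\nabla f_j(x)-\nabla f_j(y),\,x-y\rangle \leq L\norm{x-y}^2$. Since every summand is nonnegative (each $f_j$ is convex), the term with a single fixed index $i$ is dominated by the entire sum, yielding $\langle\nabla f_i(x)-\nabla f_i(y),\,x-y\rangle \leq nL\norm{x-y}^2$; the reverse direction of the characterization (available because $f_i$ is convex) then upgrades this to $L_i\leq nL$, hence $L_{\max}\leq nL$. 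The nonnegativity of the inner products, i.e.\ the absence of cancellation between workers, is precisely what convexity buys, and this is the crux of the lemma.

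Finally, $\widehat{L} \leq \sqrt{n}\,L$. I would apply co-coercivity of each convex $L_i$-smooth $f_i$, namely $\norm{\nabla f_i(x)-\nabla f_i(y)}^2 \leq L_i\langle\nabla f_i(x)-\nabla f_i(y),\,x-y\rangle$, sum over $i$, use $L_i\leq L_{\max}$ (legitimate since the inner products are nonnegative), and collapse $\sum_i\langle\nabla f_i(x)-\nabla f_i(y),\,x-y\rangle = n\langle\nabla f(x)-\nabla f(y),\,x-y\rangle \leq nL\norm{x-y}^2$ via $L$-smoothness of $f$. This produces the clean intermediate bound $\widehat{L}^2 \leq L_{\max}\,L$, into which I substitute the already-proved $L_{\max}\leq nL$ to conclude $\widehat{L}^2 \leq nL^2$. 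The remaining steps are routine bookkeeping with Cauchy--Schwarz and co-coercivity once the convex-smoothness characterization is in hand.
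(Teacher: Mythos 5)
Your proposal is correct and follows essentially the same route as the paper: the two easy bounds via averaging/Jensen, the bound $L_{\max}\leq nL$ by dropping the nonnegative (by convexity) inner products of the other workers and invoking the monotonicity characterization of smoothness for convex functions, and $\widehat{L}\leq\sqrt{n}L$ via co-coercivity summed over $i$, yielding the same intermediate bound $\widehat{L}^2\leq L_{\max}L\leq nL^2$. The only differences are cosmetic (e.g., the paper writes out Cauchy--Schwarz where you cite the characterization directly).
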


\subsection{Communication complexity of vanilla gradient descent}

Solving the aforementioned optimization problem involves two key steps: i) the workers send results to the server (server-to-workers communication), ii) the server sends results to the workers (workers-to-server communication). Let us first consider how this procedure works in the case of  \algname{GD}:
\begin{align*}
  \squeeze  x^{t+1} = x^{t} - \gamma \nabla f(x^{t}) = x^{t} -  \frac{\gamma}{n}\sum\limits_{i=1}^n\nabla f_i(x^{t}).
\end{align*}
It is well known that if the function $f$ is $L$-smooth and $\mu$-strongly convex (see Assumptions~\ref{ass:lipschitz_constant} and \ref{ass:convex}), then \algname{GD} with stepsize $\gamma = \nicefrac{1}{L}$ returns an $\varepsilon$-solution after $\cO\left(\nicefrac{L}{\mu}\log\nicefrac{1}{\varepsilon}\right)$ steps. In distributed setting, \algname{GD} would require i) the workers to send $\nabla f_i(x^{t})$ to the server ii) the server to send $x^{t+1}$ to the workers or, alternatively, ii) the server to send $\frac{1}{n}\sum_{i=1}^n\nabla f_i(x^{t})$ to the workers, depending on whether the iterates $x^t$ are updated on the server or on the workers. Assuming that the communication complexity is proportional to the number of coordinates, the server-to-workers and workers-to-server communication complexities are equal $\cO\left(\nicefrac{dL}{\mu}\log\nicefrac{1}{\varepsilon}\right).$

\subsection{Workers-to-server (=uplink) compression}
We now move on to more advanced algorithms that aim to improve the workers-to-server communication complexity. These algorithms assume that the server-to-workers communication complexity is negligible and focus exclusively on sending the message from devices to the server. Such an approach can be justified by the fact that broadcast operation may in some systems be much faster than gather operation \citep{DIANA, kairouz2021advances}. Moreover, the server can be considered to be just an abstraction representing ``all other nodes'', in which case server-to-worker communication does not exist at all.

The primary tools that help reduce communication cost are compression operators, such as vector sparsification and quantization~\citep{alistarh2017qsgd,beznosikov2020biased}. The literature distinguishes two main classes of such operators: biased and unbiased compressors. In particular, we say that:
\begin{definition}
    \label{def:biased_compression}
    A (possibly) stochastic mapping $\cC\,:\,\R^d \rightarrow \R^d$ is a \textit{biased compressor} if
    there exists $\alpha \in (0,1]$ such that
    \begin{align}
        \label{eq:biased_compressor}
        \qquad \Exp{\norm{\cC(x) - x}^2} \leq (1 - \alpha) \norm{x}^2, \,\, \forall x \in \R^d.
    \end{align}
 \end{definition}
 
 \begin{definition}
    \label{def:unbiased_compression}
    A stochastic mapping $\cC\,:\,\R^d \rightarrow \R^d$ is an \textit{unbiased compressor} if
    there exists $\omega \geq 0$ such that
    \begin{align}
        \label{eq:compressor}
        \Exp{\cC(x)} = x, \,\, \Exp{\norm{\cC(x) - x}^2} \leq \omega \norm{x}^2, \,\,\forall x \in \R^d.
    \end{align}
 \end{definition}

 We denote the collections of mappings satisfying Definition~\ref{def:biased_compression} and \ref{def:unbiased_compression} by $\mathbb{B}(\alpha)$ and $\mathbb{U}(\omega)$ respectively. One can easily show that if $\cC \in \mathbb{U}(\omega),$ then $(\omega + 1)^{-1} \cC \in \mathbb{B}\left((\omega + 1)^{-1}\right)$, meaning that the family of biased compressors is wider. Two canonical examples of compressors belonging to these two classes are  the Top$K \in \mathbb{B}(\nicefrac{K}{d})$ and Rand$K \in \mathbb{U}(\nicefrac{d}{K} - 1)$ sparsifiers. The former retains the $K$ largest values of the input vector, while the latter takes $K$ random values of this vector scaled by $\nicefrac{d}{K}$ \citep{beznosikov2020biased}. Further examples of compressors belonging to $\mathbb{B}(\alpha)$ and $\mathbb{U}(\omega)$ can be found in \citep{beznosikov2020biased}.

The theory of methods supporting workers-to-server compression is reasonably well developed. In the convex and strongly convex setting, the current state-of-the-art methods are \algname{DIANA} \citep{DIANA}, \algname{ADIANA} \citep{ADIANA}, and \algname{CANITA} \citep{li2021canita}.
In the nonconvex setting, the current state-of-the-art methods are \algname{DCGD} \citep{khaled2020better} (in the low accuracy regime) and \algname{MARINA}, \algname{DASHA}, \algname{FRECON}, and \algname{EF21} \citep{MARINA, tyurin2022dasha, tyurin2022computation, zhao2021faster, EF21} (in the high accuracy regime).

To see that these types of algorithms can achieve workers-to-server communication complexity that is no worse than that of \algname{GD}, let us consider the \algname{DIANA} method.
In the strongly convex case, \algname{DIANA} \citep{khaled2020unified} has the convergence rate $$\cO\left(\left(\left(1 + \frac{\omega}{n}\right)\frac{L_{\max}}{\mu} + \omega\right) \log \frac{1}{\varepsilon}\right).$$ Using the Rand$K$ compression operator with $K = \nicefrac{d}{n}$,  the workers-to-server complexity is not greater than 
\begin{align*} \squeeze &\cO\left(\frac{d}{n} \times \left(\left(1 + \frac{\omega}{n}\right)\frac{L_{\max}}{\mu} + \omega\right) \log \frac{1}{\varepsilon}\right) \\
    &= \cO\left( \left(\frac{d L_{\max}}{n \mu} + d\right) \log \frac{1}{\varepsilon}\right), 
\end{align*}
meaning that \algname{DIANA}'s complexity is better than \algname{GD}'s complexity $\cO\left(\nicefrac{d L}{\mu}\log\nicefrac{1}{\varepsilon}\right)$ (recall that $L_{\max} \leq n L$). The same reasoning applies to other algorithms in the convex and nonconvex worlds.

\subsection{Bidirectional compression}
\label{sec:bidirection_compression}
In the previous section, we showed that it is possible to improve workers-to-server communication complexity of \algname{GD}. But what about the server-to-workers compression? Does there exist a method that would also compress the information sent from the server to the workers and obtain the workers-to-server and server-to-workers communication complexities at least as good as with the vanilla \algname{GD} method? As far as we know, the current answer to the question is NO!

Bidirectional compression has been considered in many papers, including \citep{horvath2019natural, tang2019doublesqueeze,  liu2020double, philippenko2020artemis, philippenko2021preserved,fatkhullin2021ef21}. In Table~\ref{table:strongly_convex_case}, we provide a comparison of methods applying this type of compression in the strongly convex setting. 
Let us now take a closer look at the \algname{MCM} method of \citet{philippenko2021preserved}. For simplicity, we assume that the server and the workers use Rand$K$ compressors with parameters $K_{{\stow}}$ and $K_{{\wtos}}$, respectively. The server-to-workers communication complexity of \algname{MCM} is not less than
\begin{align*} 
 \squeeze &\Omega\left(K_{{\stow}} \times \left(1 + \omega_{{\stow}}^{3/2} + \frac{\omega_{\textnormal{\stow}} \omega_{\textnormal{\wtos}}^{1/2}}{\sqrt{n}} + \frac{\omega_{\textnormal{\wtos}}}{n}\right) \frac{L_{\max}}{\mu} \log \frac{1}{\varepsilon}\right)  \\
 &= \Omega\left(\frac{d^{3/2}}{K_{\textnormal{\stow}}^{1/2}} \frac{L_{\max}}{\mu} \log \frac{1}{\varepsilon}\right).
\end{align*}
Thus, for any $K_{\textnormal{\stow}} \in [1, d]$, the server-to-workers communication complexity is worse than the \algname{GD}'s complexity $\cO\left(\frac{d L}{\mu}\log\nicefrac{1}{\varepsilon}\right)$ by a factor of $\nicefrac{d^{1/2}}{K_{\textnormal{\stow}}^{1/2}}$. The same reasoning applies to \algname{Dore} \citep{liu2020double} and \algname{Artemis} \citep{philippenko2020artemis}:
\begin{align*} 
 \squeeze  \Omega\left(K_{{\stow}} \left(\frac{\omega_{\textnormal{\stow}}\omega_{\textnormal{\wtos}}}{n}\right) \frac{L_{\max}}{\mu} \log \frac{1}{\varepsilon}\right)  = \Omega\left(\frac{d^2}{K_{{\wtos}} n} \frac{L_{\max}}{\mu} \log \frac{1}{\varepsilon}\right).
\end{align*}
It turns out that one can find an example of problem \eqref{eq:main_task} with $L_{\max} = n L.$ Therefore, in the worst case scenario, the server-to-workers communication complexity can be up to $\nicefrac{d}{K_{{\wtos}}}$ times worse than the \algname{GD}'s complexity for any $K_{{\wtos}} \in [1, d].$

\begin{table*}
    \caption{\textbf{Strongly Convex Case.} The number of communication rounds to get an $\varepsilon$-solution (${\rm E}[\norm{\widehat{x} - x^*}^2] \leq \varepsilon$) up to logarithmic factors. To make comparison easier, if a method works with a biased compressor, we assume that the biased compressor is formed from the unbiased compressors and the following relations hold: $\omega_{\wtos} + 1 = \nicefrac{1}{\alpha_{\wtos}}$ and $\omega_{\stow} + 1 = \nicefrac{1}{\alpha_{\stow}},$ where $\omega_{\wtos}$ and $\omega_{\stow}$ are parameters of workers-to-server and server-to-workers compressors, accordingly.}
    \label{table:strongly_convex_case}
    \centering 
    \begin{threeparttable}
      \begin{tabular}{cccccc}
\toprule
     \bf  Method & \bf \# Communication Rounds& 
      \bf Limitations \\
        \midrule
        \makecell{\algnamesmall{EF} \\
        \citep{Seide2014} \\ \citep{EC-SGD}} &   $\Omega\left(\left(1 + \omega_{\wtos}\right)\frac{L_{\max}}{\mu}\right)$ & \makecell{No server-to-worker  compression.} \\
       \midrule
       \makecell{\algnamesmall{DIANA} \\
       \citep{DIANA}} &   $\left(1 + \frac{\omega_{\wtos}}{n}\right)\frac{L_{\max}}{\mu} + \omega_{\wtos}$ & \makecell{No server-to-worker  compression.} \\
       \midrule
      \makecell{\algnamesmall{Dore}, \algnamesmall{Artemis}, \algnamesmall{MURANA} \\
      \citep{liu2020double} \\
      \citep{philippenko2020artemis} \\ \citep{condat2022murana}} & $\Omega\left(\frac{\omega_{\textnormal{\stow}}\omega_{\textnormal{\wtos}}}{n}\frac{L_{\max}}{\mu}\right)$ & --- \\
       \midrule
       \makecell{\algnamesmall{MCM} \\ \citep{philippenko2021preserved}} & $\Omega\left(\left(\omega_{{\stow}}^{3/2} + \frac{\omega_{\textnormal{\stow}} \omega_{\textnormal{\wtos}}^{1/2}}{\sqrt{n}} + \frac{\omega_{\textnormal{\wtos}}}{n}\right)\frac{L_{\max}}{\mu}\right)$ & --- \\
      \midrule
    \cellcolor{bgcolor1}   \begin{tabular}{c} \algnamesmall{EF21-P + DIANA} (new) \\ (Theorem~\ref{theorem:diana_strong}) \end{tabular} & \cellcolor{bgcolor1} $(1 + \omega_{\textnormal{\stow}})\frac{L}{\mu} + \frac{\omega_{\wtos}}{n}\frac{L_{\max}}{\mu} + \omega_{\wtos}$ & \cellcolor{bgcolor1}  --- \\
      \midrule
 \cellcolor{bgcolor1}       \begin{tabular}{c}\algnamesmall{EF21-P + DCGD} (new) \\ (Theorem~\ref{theorem:dcgd_strong})\end{tabular} & \cellcolor{bgcolor1} $(1 + \omega_{\textnormal{\stow}})\frac{L}{\mu} + \frac{\omega_{\wtos}}{n}\frac{L_{\max}}{\mu}$ & 
 \cellcolor{bgcolor1}       \begin{tabular}{c}Interpolation regime: \\
      $\nabla f_i(x^*) = 0$\end{tabular} \\
      \bottomrule
      \end{tabular}
  \end{threeparttable}
 \end{table*}

\section{\algnamebig{EF21-P}: A Useful Reparameterization of the Classical \algnamebig{EF} Mechanism}
\label{sec:ef21_p}
Before we continue discussing bidirectional methods and our contributions, let us remark on the key moment which ultimately enabled the main results of this paper.
Consider solving the optimization problem
\begin{equation}\label{eq:min_f}\squeeze \min \limits_{x\in \R^d} f(x),\end{equation}
where $f:\R^d\to \R$ is a smooth but not necessarily convex function.
We now introduce a technique which we call \algname{EF21-P} that performs error-feedback updates in the primal space of the iterates/models \footnote{\algnamesmall{EF21-P} is initially inspired by the recently proposed error-feedback mechanism, \algnamesmall{EF21}, of \citet{EF21}, which compresses the dual vectors, i.e., the gradients. \algnamesmall{EF21} is currently the state-of-the-art error feedback mechanism in terms of its theoretical properties and practical performance~\citep{fatkhullin2021ef21}. If we wish to explicitly highlight its dual nature, we could instead meaningfully call their method \algnamesmall{EF21-D}.}. 
 Given a contractive compression operator $\cC \in \mathbb{B}(\alpha)$ from Definition~\ref{def:biased_compression}, \algname{EF21-P} method aims to solve \eqref{eq:min_f} via the iterative process
\begin{equation}
\begin{aligned}
   \label{eq:ef21_primal}
   x^{t+1} &= x^t - \gamma \nabla f(w^t), \\
  w^{t+1} &= w^t + \cC^t(x^{t+1} - w^t), 
\end{aligned}
\end{equation}
where $\gamma>0$ is a stepsize,  $x^0 \in \R^d$ is the initial iterate, $w^0=x^0\in \R^d$ is the initial iterate {\em shift},  and  $\cC^t$ is an instantiation of a randomized contractive compressor $\cC$ sampled at time $t$. If $f$ is $L$-smooth and $\mu$-strongly convex, we prove that both $x^t$ and $w^t$ converge to $x^* = \arg\min f$ at a linear rate, in $\cO\left(\nicefrac{L}{\alpha \mu} \log \nicefrac{1}{\varepsilon}\right)$ iterations in expectation (see Section~\ref{sec:EF21-P_convex}).

Surprisingly, it turns out that \algname{EF21-P} is equivalent to the classical \algname{EF} mechanism of \citet{Seide2014} under an appropriate reparameterization of the iterates $x^t$ and $w^t.$ In particular, by taking $e^t \eqdef x^t - w^t$ in \eqref{eq:ef21_primal}, we have
\begin{equation}
\begin{aligned}
    \label{eq:ef_classical}
    w^{t+1} &= w^t + \cC^t(e^{t} - \gamma \nabla f(w^t)), \\
    e^{t+1} &= e^t - \cC^t(e^{t} - \gamma \nabla f(w^t)) - \gamma \nabla f(w^t).
\end{aligned}
\end{equation}
The procedure \eqref{eq:ef_classical} is the classical \algname{EF} mechanism (up to $\pm$ signs) studied in many papers \cite{Seide2014, Koloskova2019-DecentralizedEC-2019, gorbunov2020linearly}.

Looking ahead, \algname{EF21-P} \eqref{eq:ef21_primal} turns out to be an extremely useful ``form'' of \algname{EF} \eqref{eq:ef_classical}. Indeed, as we shall see, when combined with suitable methods performing worker-to-server compression, \algname{EF21-P} leads to new state-of-the-art theoretical communication complexities! This reparameterization is an essential component of our proofs since they explicitly use the iterates $x^t$ from \eqref{eq:ef21_primal} which are not defined in \eqref{eq:ef_classical}.

In its vanilla form, \algname{EF21-P} is not the main focus of this paper. Instead, we use it as an important ingredient in the design of more elaborate algorithms. Namely, we exploit \algname{EF21-P} as the mechanism for compressing and subsequently error-correcting the model broadcast by the server to the workers (=downlink compression).

\section{Contributions}
By combining \algname{EF21-P} with suitable methods (``friends'' in the title of the paper) performing worker-to-server compression, in particular, \algname{DIANA} \citep{DIANA,horvath2019stochastic} or \algname{DCGD} \citep{alistarh2017qsgd, DCGD}, we obtain methods, suggestively named  \algname{EF21-P + DIANA} (Algorithm~\ref{algorithm:diana_ef21_p}) and \algname{EF21-P + DCGD} (Algorithm~\ref{algorithm:dcgd_ef21_p}),  both supporting bidirectional compression, and both  enjoying  new state-of-the-art theoretical communication complexity for convex and nonconvex problems. 
\begin{algorithm*}[t]
    \caption{\algname{EF21-P + DIANA}}
    \footnotesize
    \begin{algorithmic}[1]
    \label{algorithm:diana_ef21_p}
    \STATE \textbf{Parameters:} learning rates $\gamma > 0$ {\color{gray}(for learning the model)} and $\beta > 0$ {\color{gray}(for learning the gradient shifts);} initial model $x^0 \in \R^d$ {\color{gray}(stored on the server and the workers);}  initial gradient shifts $h^0_1, \dots , h^0_n\in \R^d$ {\color{gray}(stored on the workers);} average of the initial gradient shifts  $h^0 = \frac{1}{n}\sum_{i = 1}^n h_i^0$  {\color{gray}(stored on the server);} initial model shift $w^0 = x^0\in \R^d$ {\color{gray}(stored on the server and the  workers)} 
    \FOR{$t = 0, 1, \dots, T - 1$} 
    \FOR{$i = 1, \dots, n$ in parallel} 
    \STATE $m_i^t = \cC_i^{D}(\nabla f_i(w^t) - h_i^t)$  \hfill{\scriptsize \color{gray} Worker $i$ compresses the shifted gradient via the dual compressor $\cC_i^{D} \in \mathbb{U}(\omega)$}
    \STATE {\color{blue}Send compressed message $m_i^t$ to the server}
    \STATE $h^{t+1}_i = h^t_i + \beta m^t_i$ \hfill{\scriptsize \color{gray} Worker $i$ updates its local gradient shift with stepsize $\beta$}
    \ENDFOR
    \STATE $m^t = \frac{1}{n}\sum_{i = 1}^n m_i^t$ \hfill{\scriptsize \color{gray}Server averages the $n$ messages received from the workers}
    \STATE $h^{t+1} = h^t + \beta m^t$ \hfill{\scriptsize \color{gray} Server updates the average gradient shift so that $h^t = \frac{1}{n}\sum_{i = 1}^n h_i^t$}
    \STATE $g^{t} = h^t + m^t$ \hfill{\scriptsize \color{gray} Server computes the gradient estimator}
    \STATE $x^{t+1} = x^t - \gamma g^t$ \hfill{\scriptsize \color{gray} Server takes a gradient-type step with stepsize $\gamma$}
    \STATE $p^{t+1} = \cC^{P}\left(x^{t+1} - w^t\right)$  \hfill {\scriptsize \color{gray} Server compresses the shifted model   via the primal compressor $\cC^{P} \in \mathbb{B}\left(\alpha\right)$}
    \STATE $w^{t+1} = w^t + p^{t+1}$\hfill{\scriptsize \color{gray}Server updates the model shift}
    \STATE {\color{blue}Broadcast compressed message $p^{t+1}$ to all $n$ workers}
    \FOR{$i = 1, \dots, n$ in parallel}
    \STATE $w^{t+1} = w^{t} + p^{t+1}$ \hfill{\scriptsize \color{gray} Worker $i$ updates its local copy of the model shift}
    \ENDFOR
    \ENDFOR
    \end{algorithmic}
\end{algorithm*}
    
 $\diamond$ {\bf Convex setting.} \algname{EF21-P + DIANA} provides new state-of-the-art convergence rate for distributed optimization  in the strongly convex (see Table~\ref{table:strongly_convex_case}) and general convex regimes.
 This is the first method supporting bidirectional compression whose server-to-workers and workers-to-server communication complexity is no worse than that of vanilla \algname{GD}. When the workers calculate stochastic gradients (see Section~\ref{sec:StochGrad}), we prove that \algname{EF21-P + DIANA} improves the rates of prior methods. Further,  we prove that \algname{EF21-P + DCGD} has an even better convergence rate than \algname{EF21-P + DIANA} in the interpolation regime (see Section~\ref{sec:interpol}). 
 
 $\diamond$ {\bf Nonconvex setting.} In the nonconvex setting (see Section~\ref{sec:nonconvex}),  \algname{EF21-P + DCGD} is the first method supporting bidirectional compression whose convergence rate  {\em decouples the noise} coming from the workers-to-server and server-to-workers compression, respectively, from a multiplicative to an {\em additive} dependence (see Table~\ref{table:general_case}). Moreover, \algname{EF21-P + DCGD} provides the new state-of-the-art convergence rate in the low accuracy regimes ($\varepsilon$ is small or the \# of workers $n$ is large). Further, we provide examples of optimization problems where \algname{EF21-P + DCGD} outperforms previous state-of-the-art methods even in the high accuracy regime.

 $\diamond$  {\bf Unified \algname{SGD} analysis framework with the \algname{EF21-P} mechanism.} 
 \citet{khaled2020better} provide a unified framework for the analysis of \algname{SGD}-type methods for smooth nonconvex problems. Their framework allows to analyze \algname{SGD} and \algname{DCGD} under various assumptions, including strong and weak growth, and various sampling strategies, including uniform and importance sampling. Unfortunately, the theory relies heavily on the unbiasedness of the stochastic gradients and, as a result, it is not applicable to our methods (in \algname{EF21-P + DCGD}, $\Exp{g^t} = \nabla f(w^t) \neq \nabla f(x^t)$). Therefore, we decided to rebuild the theory from scratch. Our results inherit all previous achievements of \citep{khaled2020better}, and further generalize the unified framework to make it suitable for optimization methods where the iterates are perturbed using the \algname{EF21-P} mechanism.
 We believe that this is a contribution with potential applications beyond the focus of this work (distributed optimization with bidirectional compression). This development is presented in Section~\ref{sec:abc}. Our main results from Section~\ref{sec:nonconvex_dcgd_general}--\ref{sec:nonconvex_dcgd_homogeneous} which cater to the nonconvex setting are simple corollaries of our general theory. 

\section{\algnamebig{EF21-P + DIANA} and \algnamebig{EF21-P + DCGD} Methods}

We are now ready to  present our main method \algname{EF21-P + DIANA} (see Algorithm~\ref{algorithm:diana_ef21_p}), which is a combination of \algname{EF21-P} mechanism described in Section~\ref{sec:ef21_p} and the \algname{DIANA} method of \citet{DIANA, horvath2019stochastic,gorbunov2020unified}. The pseudocode of Algorithm~\ref{algorithm:diana_ef21_p} should be self-explanatory. If the gradient shifts $\{h_i^t\}$ employed by \algname{DIANA} are initialized to zeros, and we choose $\beta=0$, then \algname{DIANA} reduces to \algname{DCGD}, and  \algname{EF21-P + DIANA}  thus reduces to  \algname{EF21-P + DCGD} (see Algorithm~\ref{algorithm:dcgd_ef21_p}). If we further choose the  dual/gradient compressors $\cC_i^D$ to be identity mappings, then \algname{EF21-P + DCGD} further reduces to \algname{EF21-P}.

\section{Analysis in the Convex Setting} \label{sec:convex}

Let us state our first convergence theorem.
\begin{restatable}{theorem}{THEOREMDIANASTRONGLY}
        \label{theorem:diana_strong}
        Suppose that Assumptions~\ref{ass:lipschitz_constant}, \ref{ass:workers_lipschitz_constant} and \ref{ass:convex} hold, $\beta = \frac{1}{\omega + 1},$ set $x^0 = w^0$ and let
$\gamma \leq \min\left\{\frac{n}{160 \omega L_{\max}}, \frac{\alpha}{100 L}, \frac{1}{(\omega + 1)\mu}\right\}.
$
        Then Algorithm~\ref{algorithm:diana_ef21_p} returns $x^{T}$ such that
        \begin{align*}
 \squeeze  &\frac{1}{2\gamma}\Exp{\norm{x^{T} - x^*}^2} + \Exp{f(x^{T}) - f(x^*)} \\&\leq \left(1 - \frac{\gamma \mu}{2}\right)^T V^0,
        \end{align*}
        where $V^0 \eqdef \frac{1}{2\gamma}\Exp{\norm{x^0 - x^*}^2} + \left(f(x^{0}) - f(x^{*})\right) + \frac{8 \gamma \omega (\omega + 1)}{n^2}\sum_{i=1}^n \norm{h^0_i - \nabla f_i(x^*)}^2.$
\end{restatable}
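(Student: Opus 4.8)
The plan is to build one Lyapunov function that simultaneously tracks the iterate distance, the function suboptimality, the \algname{DIANA} gradient-shift discrepancy, and the \algname{EF21-P} primal error, and to show it contracts by the factor $1-\nicefrac{\gamma\mu}{2}$ at every step. I would work with
$$\Psi^t \eqdef \frac{1}{2\gamma}\sqnorm{x^t - x^*} + \left(f(x^t) - f(x^*)\right) + \frac{8\gamma\omega(\omega+1)}{n^2}\sum_{i=1}^n\sqnorm{h^t_i - \nabla f_i(x^*)} + \frac{c}{\gamma}\sqnorm{x^t - w^t},$$
for a suitable positive constant $c$ (possibly carrying an $\alpha$ dependence). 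Since the last two terms are nonnegative and the \algname{EF21-P} error $\sqnorm{x^0 - w^0}$ vanishes at initialization (because $x^0 = w^0$), one has $\Psi^T \geq \frac{1}{2\gamma}\sqnorm{x^T - x^*} + (f(x^T) - f(x^*))$ while $\Psi^0 = V^0$; hence establishing $\Exp{\Psi^{t+1}} \leq (1-\nicefrac{\gamma\mu}{2})\Psi^t$ and unrolling immediately yields the claim.

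First I would record the two probabilistic facts that drive the \algname{DIANA} part: conditioned on the history, $\Exp{g^t} = \nabla f(w^t)$ (the gradient at the \emph{shifted} point $w^t$, not at $x^t$ --- this is the bias), and $\Exp{\sqnorm{g^t - \nabla f(w^t)}} \leq \frac{\omega}{n^2}\sum_{i=1}^n \sqnorm{\nabla f_i(w^t) - h^t_i}$, using unbiasedness and independence of the $\cC_i^D \in \mathbb{U}(\omega)$. Then I would derive four one-step estimates: (i) expand $\sqnorm{x^{t+1}-x^*}$, take conditional expectation, and apply $\mu$-strong convexity after the split $\inp{\nabla f(w^t)}{x^t-x^*} = \inp{\nabla f(w^t)}{w^t-x^*} + \inp{\nabla f(w^t)}{x^t-w^t}$, charging the cross term to $\sqnorm{x^t-w^t}$ via Young; (ii) an $L$-smoothness descent lemma for $\Exp{f(x^{t+1})}$, converted between $f(w^t)-f(x^*)$ and $f(x^t)-f(x^*)$ at the cost of extra $\sqnorm{x^t-w^t}$ terms; (iii) the \algname{DIANA} shift recursion $\Exp{\sqnorm{h^{t+1}_i - \nabla f_i(x^*)}} \leq (1-\beta)\sqnorm{h^t_i - \nabla f_i(x^*)} + \beta\sqnorm{\nabla f_i(w^t) - \nabla f_i(x^*)}$, which with $\beta = \nicefrac{1}{(\omega+1)}$ contracts the shift term at rate $\frac{\omega}{\omega+1}$; and (iv) the \algname{EF21-P} recursion $\Exp{\sqnorm{x^{t+1}-w^{t+1}}} \leq (1-\alpha)\sqnorm{x^{t+1}-w^t}$, which after expanding $x^{t+1}-w^t = (x^t-w^t)-\gamma g^t$ with Young becomes $\Exp{\sqnorm{x^{t+1}-w^{t+1}}} \leq (1-\nicefrac{\alpha}{2})\sqnorm{x^t-w^t} + \nicefrac{2\gamma^2}{\alpha}\Exp{\sqnorm{g^t}}$.

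I would then combine (i)--(iv) with the stated weights, writing $\Exp{\sqnorm{g^t}} = \sqnorm{\nabla f(w^t)} + \text{variance}$ and bounding $\sqnorm{\nabla f(w^t)} \leq 2L(f(w^t)-f(x^*))$ together with $\frac{1}{n}\sum_{i=1}^n\sqnorm{\nabla f_i(w^t)-\nabla f_i(x^*)} \leq \widehat{L}^2\sqnorm{w^t-x^*}$ (and the convex/$L_{\max}$ refinement for the function-value form). The crux --- and the step I expect to be the main obstacle --- is the coefficient bookkeeping forced by the biased gradient: the primal error $\sqnorm{x^t-w^t}$ is both a \emph{source} of bias in (i)--(ii) and a quantity \emph{driven} by $\Exp{\sqnorm{g^t}}$ in (iv), creating a near-circular dependence, so one must verify that the aggregate coefficient of every undesirable quantity --- $f(w^t)-f(x^*)$ (equivalently $\sqnorm{\nabla f(w^t)}$), the variance $\frac{\omega}{n^2}\sum_i\sqnorm{\nabla f_i(w^t)-h^t_i}$, and $\sqnorm{w^t-x^*}$ --- is rendered nonpositive. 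This is precisely what the four stepsize ceilings enforce: $\gamma \leq \nicefrac{1}{(\omega+1)\mu}$ makes the \algname{DIANA} contraction $\frac{\omega}{\omega+1}$ at least as strong as $1-\nicefrac{\gamma\mu}{2}$; $\gamma \leq \nicefrac{\alpha}{(100L)}$ balances the smoothness/gradient-norm contributions against the $\nicefrac{\alpha}{2}$ \algname{EF21-P} contraction; and $\gamma \leq \nicefrac{n}{(160\omega L_{\max})}$ together with $\gamma \leq \nicefrac{\sqrt{n\alpha}}{(20\sqrt{\omega}\widehat{L})}$ absorb the two variance contributions (the $\nicefrac{\omega}{n}$-weighted $L_{\max}$ part and the $\widehat{L}^2$ part entering through (iv)). Once all coefficients check out, one reads off $\Exp{\Psi^{t+1}} \leq (1-\nicefrac{\gamma\mu}{2})\Psi^t$ and concludes.
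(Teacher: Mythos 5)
Your overall architecture is the same as the paper's (an auxiliary one-step contraction, Theorem~\ref{theorem:general_diana}, followed by unrolling): the same four-term Lyapunov function with the same weights, the conditional unbiasedness $\ExpSub{t+1}{g^t}=\nabla f(w^t)$, the variance bound $\ExpSub{t+1}{\sqnorm{g^t-\nabla f(w^t)}}\le\frac{\omega}{n^2}\sum_i\sqnorm{\nabla f_i(w^t)-h_i^t}$, the \algname{DIANA} shift recursion with $\beta=\nicefrac{1}{(\omega+1)}$, and the final recursion argument. However, your step (iv) has a genuine gap that breaks the coefficient bookkeeping under the \emph{stated} stepsizes. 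You apply Young's inequality directly to $\norm{(x^t-w^t)-\gamma g^t}^2$ with the random $g^t$ inside, obtaining
\begin{equation*}
\ExpSub{t+1}{\sqnorm{x^{t+1}-w^{t+1}}}\le\left(1-\tfrac{\alpha}{2}\right)\sqnorm{x^t-w^t}+\tfrac{2\gamma^2}{\alpha}\ExpSub{t+1}{\sqnorm{g^t}},
\end{equation*}
so when you then split $\ExpSub{t+1}{\sqnorm{g^t}}=\sqnorm{\nabla f(w^t)}+\ExpSub{t+1}{\sqnorm{g^t-\nabla f(w^t)}}$, the compression variance inherits the Young factor $\nicefrac{2}{\alpha}$. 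The paper does this in the opposite order: it first applies the variance decomposition \eqref{eq:vardecomp} to $\ExpSub{t+1}{\norm{x^t-\gamma g^t-w^t}^2}$, so that only the \emph{deterministic} drift $\norm{x^t-\gamma\nabla f(w^t)-w^t}^2$ pays the $\nicefrac{2}{\alpha}$ factor, while the variance enters with coefficient $(1-\alpha)\gamma^2\le\gamma^2$.

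This $\nicefrac{1}{\alpha}$ loss on the variance is fatal for small $\alpha$, and no choice of your weight $c$ can repair it, because the damage lands on a self-loop coefficient. Concretely, the variance bound contributes $\frac{4\omega\widehat{L}^2}{n}\sqnorm{w^t-x^t}$, so in your recursion the self-coefficient of $\sqnorm{w^t-x^t}$ becomes $1-\frac{\alpha}{2}+\frac{8\gamma^2\omega\widehat{L}^2}{\alpha n}$ (plus the $\frac{4\gamma^2L^2}{\alpha}$ term, which is harmless); forcing the extra term below, say, $\frac{\alpha}{8}$ requires $\gamma\le\frac{\alpha\sqrt{n}}{8\sqrt{\omega}\widehat{L}}$. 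The theorem only assumes $\gamma\le\frac{\sqrt{n\alpha}}{20\sqrt{\omega}\widehat{L}}$, which is weaker by a factor $\nicefrac{1}{\sqrt{\alpha}}$: for $\alpha<0.16$ there are admissible stepsizes for which your contraction inequality simply does not hold as derived. (With the paper's ordering the requirement is only $\frac{4\gamma^2\omega\widehat{L}^2}{n}\le\frac{\alpha}{8}$, which the stated stepsize satisfies.) The same extra $\nicefrac{1}{\alpha}$ propagates into the $h$-shift and $L_{\max}$ pieces of the variance, where the conditions $\gamma\le\nicefrac{n}{(160\omega L_{\max})}$ and $\gamma\le\nicefrac{\alpha}{(100L)}$ can then no longer absorb them. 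The fix is exactly the paper's trick: condition on the history, split $x^{t+1}-w^t$ into its conditional mean and fluctuation \emph{first}, and apply Young only to the mean part.
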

 
The above result says that \algname{EF21-P + DIANA} guarantees to find an $\varepsilon$-solution after 
$$ \squeeze T_{\textnormal{NEW}} \eqdef \cO\left(\left(\frac{L}{\alpha \mu} + \frac{\omega}{n}\frac{L_{\max}}{\mu} + \omega\right)\log \frac{1}{\varepsilon}\right)$$ steps.
Comparing this rate with rates achieved by prior algorithms (see Table~\ref{table:strongly_convex_case}), our method is the first one to guarantee the decoupling of noises $\alpha$ and $\omega$ coming from the server-to-workers and the workers-to-server compressors. Moreover, it is more general, as the server-to-workers compression can use biased compressors, including Top$K$ and Rank$K$ \citep{safaryan2021fednl}. These can in practice perform better than their unbiassed counterparts \citep{beznosikov2020biased, PowerSGD}.

As promised, let us now show that the communication complexity  of \algname{EF21-P + DIANA} is no worse than that of \algname{GD}. For simplicity, we assume that the server and the workers use the Top$K$ and Rand$K$ compressors, respectively. Since under this assumption, $\omega = \nicefrac{d}{K} - 1$ and $\alpha = \nicefrac{K}{d}$, the server-to-workers and the workers-to-server communication complexities equal
\begin{align*}
 \squeeze &\cO\left(K \times\left(\frac{L}{\alpha \mu} + \frac{\omega}{n}\frac{L_{\max}}{\mu} + \omega\right)\log \frac{1}{\varepsilon}\right)\\
 &=\cO\left(\left(d\frac{L}{\mu} + \frac{d}{n}\frac{L_{\max}}{\mu}\right)\log \frac{1}{\varepsilon}\right).
\end{align*}
Note that $L_{\max} \leq n L,$ so this complexity is no worse than  \algname{GD}'s complexity for any $K \in [1, d].$ In Section~\ref{sec:why_better}, we discuss regimes in which the new complexity can be strictly better. The general convex case is discussed in Section~\ref{sec:comm_diana_general}.

\subsection{Stochastic gradients}\label{sec:StochGrad}
In this section, we assume that the workers in \algname{EF21-P + DIANA} calculate stochastic gradients instead of exact gradients.

\begin{assumption}[Stochastic gradients]
    \label{ass:stochastic_unbiased_and_variance_bounded}
    For all $x \in \R^d,$ stochastic gradients $\widetilde{\nabla} f_i(x)$ are unbiased and have bounded variance, i.e.,
    ${\rm E}[\widetilde{\nabla} f_i(x)] = \nabla f_i(x),$ and ${\rm E}[ \|\widetilde{\nabla} f_i(x) - \nabla f_i(x)\|^2] \leq \sigma^2$ for all $i\in[n]$, where $\sigma^2 \geq 0.$
\end{assumption} 

We now provide a generalization of Theorem~\ref{theorem:diana_strong}:
\begin{restatable}{theorem}{THEOREMDIANASTRONGLYSTOCHASTIC}
    \label{theorem:diana_strong_stochastic}
    Let us consider Algorithm~\ref{algorithm:diana_ef21_p} using stochastic gradients $\widetilde{\nabla} f_i$ instead of exact gradients $\nabla f_i$ for all $i \in [n]$.
Let Assumptions~\ref{ass:lipschitz_constant}, \ref{ass:workers_lipschitz_constant}, \ref{ass:convex} and \ref{ass:stochastic_unbiased_and_variance_bounded} hold, $\beta = \frac{1}{\omega + 1},$ $x^0 = w^0,$ and 
    $\gamma \leq \min\left\{\frac{n}{160 \omega L_{\max}}, \frac{\alpha}{100 L}, \frac{1}{(\omega + 1)\mu}\right\}.$
    Then Algorithm~\ref{algorithm:diana_ef21_p} returns $x^{T}$ such that
    \begin{align*}
 \squeeze &\frac{1}{2\gamma}\Exp{\norm{x^{T} - x^*}^2} + \Exp{f(x^{T}) - f(x^*)} \\ &\leq \left(1 - \frac{\gamma \mu}{2}\right)^T V^0 + \frac{24 (\omega + 1) \sigma^2}{\mu n},
    \end{align*}
    where $V^0 \eqdef \frac{1}{2\gamma}\Exp{\norm{x^0 - x^*}^2} + \left(f(x^{0}) - f(x^{*})\right) + \frac{8 \gamma \omega (\omega + 1)}{n^2}\sum_{i=1}^n \norm{h^0_i - \nabla f_i(x^*)}^2.$
\end{restatable}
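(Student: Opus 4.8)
The plan is to reduce the claim to the deterministic Theorem~\ref{theorem:diana_strong} by isolating the two places where the stochastic gradients $\widetilde{\nabla} f_i$ enter the one-step analysis and re-deriving the relevant second moments with their extra $\sigma^2$ contributions; everywhere else the argument is verbatim. I would work with the same Lyapunov function that proves Theorem~\ref{theorem:diana_strong}, namely $\Psi^t \eqdef \frac{1}{2\gamma}\norm{x^t - x^*}^2 + (f(x^t) - f(x^*)) + A\norm{x^t - w^t}^2 + \frac{8\gamma\omega(\omega + 1)}{n^2}\sum_{i=1}^n\norm{h_i^t - \nabla f_i(x^*)}^2$ for the same constant $A > 0$, and establish a one-step bound $\Exp{\Psi^{t+1} \mid \mathcal{F}^t} \leq \left(1 - \frac{\gamma\mu}{2}\right)\Psi^t + C$ with $C = \frac{12\gamma(\omega + 1)\sigma^2}{n}$, where $\mathcal{F}^t$ collects all randomness up to iteration $t$. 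Since $x^0 = w^0$ annihilates the $\norm{x^0 - w^0}^2$ term we have $\Psi^0 = V^0$, and since $\Psi^T$ dominates the left-hand side of the claim, taking total expectation, unrolling, and summing the geometric series $\sum_{k \geq 0}\left(1 - \frac{\gamma\mu}{2}\right)^k C \leq \frac{2C}{\gamma\mu} = \frac{24(\omega + 1)\sigma^2}{\mu n}$ yields the theorem.

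The crux is the variance bound for the estimator $g^t$. Since $\cC_i^D \in \mathbb{U}(\omega)$ and the stochastic gradients are unbiased, I would first confirm $\Exp{g^t \mid \mathcal{F}^t} = \nabla f(w^t)$ (so $g^t$ is unbiased for $\nabla f(w^t)$, \emph{not} $\nabla f(x^t)$ --- the source of the non-standard bias). Conditioning on the stochastic gradients before taking the compressor expectation makes the cross terms vanish at each level, and independence across workers makes the per-worker variances add; this gives, for each $i$, $\Exp{\norm{m_i^t - (\nabla f_i(w^t) - h_i^t)}^2 \mid \mathcal{F}^t} \leq \omega\,\Exp{\norm{\widetilde{\nabla} f_i(w^t) - h_i^t}^2 \mid \mathcal{F}^t} + \sigma^2 \leq (\omega + 1)\sigma^2 + \omega\norm{\nabla f_i(w^t) - h_i^t}^2$, and hence $\Exp{\norm{g^t - \nabla f(w^t)}^2 \mid \mathcal{F}^t} \leq \frac{(\omega + 1)\sigma^2}{n} + \frac{\omega}{n^2}\sum_{i=1}^n\norm{\nabla f_i(w^t) - h_i^t}^2$ --- exactly the deterministic bound plus the new $\frac{(\omega + 1)\sigma^2}{n}$ term. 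The identical conditioning argument applied to the \algname{DIANA} shift update yields $\Exp{\norm{h_i^{t+1} - \nabla f_i(x^*)}^2 \mid \mathcal{F}^t} \leq (1 - \beta)\norm{h_i^t - \nabla f_i(x^*)}^2 + \beta\norm{\nabla f_i(w^t) - \nabla f_i(x^*)}^2 + \beta\sigma^2$, which is the deterministic shift recursion with the extra $\beta\sigma^2 = \frac{\sigma^2}{\omega + 1}$ term (using $\beta = \frac{1}{\omega + 1}$).

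Everything else is inherited unchanged from the proof of Theorem~\ref{theorem:diana_strong} and assembled the same way: the descent identity $\Exp{\norm{x^{t+1} - x^*}^2 \mid \mathcal{F}^t} = \norm{x^t - x^*}^2 - 2\gamma\inp{\nabla f(w^t)}{x^t - x^*} + \gamma^2\Exp{\norm{g^t}^2 \mid \mathcal{F}^t}$, combined with $\mu$-strong convexity of $f$ and $L$-smoothness via $\norm{\nabla f(w^t)}^2 \leq 2L(f(w^t) - f(x^*))$; the \algname{EF21-P} model-shift contraction $\Exp{\norm{x^{t+1} - w^{t+1}}^2 \mid \mathcal{F}^t} \leq (1 - \alpha)\norm{x^{t+1} - w^t}^2$ from $\cC^P \in \mathbb{B}(\alpha)$, expanded by Young's inequality into a $\left(1 - \frac{\alpha}{2}\right)\norm{x^t - w^t}^2$ contraction plus a $\gamma^2\Exp{\norm{g^t}^2 \mid \mathcal{F}^t}$ term; and the couplings $\frac{1}{n}\sum_i\norm{\nabla f_i(w^t) - \nabla f_i(x^*)}^2 \leq \widehat{L}^2\norm{w^t - x^*}^2 \leq 2\widehat{L}^2\left(\norm{x^t - x^*}^2 + \norm{x^t - w^t}^2\right)$. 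Feeding the two $\sigma^2$-corrected moment bounds into this machinery, every $\sigma^2$-free term recombines precisely as in Theorem~\ref{theorem:diana_strong} (the stepsize restriction on $\gamma$ is exactly what makes the collected negative terms absorb the positive ones), while the $\sigma^2$ contributions --- the dominant $\frac{8\gamma\omega(\omega + 1)}{n^2}\cdot n\beta\sigma^2 = \frac{8\gamma\omega\sigma^2}{n}$ from the shift term plus the lower-order $\gamma^2$-weighted pieces from the $\Exp{\norm{g^t}^2 \mid \mathcal{F}^t}$ terms --- sum to at most $C = \frac{12\gamma(\omega + 1)\sigma^2}{n}$. The main obstacle is therefore pure bookkeeping: propagating the two independent noise sources through the $g^t$-variance bound without double counting, and checking that after weighting by the Lyapunov coefficients and summing over workers the per-step variance stays below $\frac{12\gamma(\omega+1)\sigma^2}{n}$ so the geometric sum gives exactly $\frac{24(\omega + 1)\sigma^2}{\mu n}$. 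The genuinely non-standard difficulty --- the bias $\Exp{g^t \mid \mathcal{F}^t} = \nabla f(w^t) \neq \nabla f(x^t)$ --- is already handled by the \algname{EF21-P} coupling carried over from the deterministic theorem.
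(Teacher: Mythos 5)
Your overall architecture is the paper's: the same Lyapunov function $\frac{1}{2\gamma}\norm{x^t-x^*}^2 + (f(x^t)-f(x^*)) + \nu\norm{x^t-w^t}^2 + \frac{\kappa}{n}\sum_i\norm{h_i^t-\nabla f_i(x^*)}^2$, the same two $\sigma^2$-corrected second moments (your per-worker bound $\omega\norm{\nabla f_i(w^t)-h_i^t}^2+(\omega+1)\sigma^2$ and the shift recursion with extra term $\beta^2(\omega+1)\sigma^2=\beta\sigma^2$ are exactly what the paper derives), the same one-step inequality with additive error $\frac{12\gamma(\omega+1)\sigma^2}{n}$, and the same geometric summation giving $\frac{24(\omega+1)\sigma^2}{\mu n}$.

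However, one step of the machinery you claim to ``inherit unchanged'' is not what the deterministic proof does, and as written it would fail. You propose the coupling
\begin{equation*}
\frac{1}{n}\sum_{i=1}^n\norm{\nabla f_i(w^t)-\nabla f_i(x^*)}^2 \;\leq\; \widehat{L}^2\norm{w^t-x^*}^2 \;\leq\; 2\widehat{L}^2\left(\norm{x^t-x^*}^2+\norm{x^t-w^t}^2\right).
\end{equation*}
The paper never bounds gradient differences by $\norm{x^t-x^*}^2$. In the one-step recursion the only coordinates with slack are the function value (coefficient $\frac{1}{2}$ on the right-hand side, versus $1-\frac{\gamma\mu}{2}$ needed) and $\norm{w^t-x^t}^2$ (coefficient $1-\frac{\alpha}{8}$); the $\norm{x^t-x^*}^2$ coordinate already carries exactly its contraction factor $\frac{1}{2\gamma}\left(1-\frac{\gamma\mu}{2}\right)$ and has no room left. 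Your coupling, fed through $\gamma\Exp{\norm{g^t-\nabla f(w^t)}^2}$, injects an extra positive term of order $\frac{\gamma\omega\widehat{L}^2}{n}\norm{x^t-x^*}^2$, and absorbing it forces $\gamma\lesssim\frac{n\mu}{\omega\widehat{L}^2}$. This restriction is not implied by the theorem's stepsize conditions: fix $n,\omega,\alpha,\widehat{L}$ and let $\mu\to 0$; the stated bound $\min\left\{\frac{n}{160\omega L_{\max}},\frac{\sqrt{n\alpha}}{20\sqrt{\omega}\widehat{L}},\frac{\alpha}{100L},\frac{1}{(\omega+1)\mu}\right\}$ stays bounded away from zero while $\frac{n\mu}{\omega\widehat{L}^2}\to 0$, so the contraction cannot be closed for admissible $\gamma$ (and patching it by shrinking $\gamma$ would give a $\mu$-dependent, strictly worse rate than claimed). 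The correct step, used throughout the paper's proofs, is to split through $x^t$ and apply cocoercivity (Lemma~\ref{lemma:lipt_func}): $\frac{1}{n}\sum_i\norm{\nabla f_i(w^t)-\nabla f_i(x^*)}^2 \leq \frac{2}{n}\sum_i\norm{\nabla f_i(w^t)-\nabla f_i(x^t)}^2 + \frac{2}{n}\sum_i\norm{\nabla f_i(x^t)-\nabla f_i(x^*)}^2 \leq 2\widehat{L}^2\norm{w^t-x^t}^2 + 4L_{\max}\left(f(x^t)-f(x^*)\right)$, whose two pieces are absorbed by the $\norm{w^t-x^t}^2$ slack and by the function-value slack under $\gamma\leq\frac{n}{160\omega L_{\max}}$, respectively. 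With that substitution your plan goes through and coincides with the paper's proof.
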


For general convex case, we refer to Theorem~\ref{theorem:diana_general_convex_stochastic}. Note that Theorem~\ref{theorem:diana_strong_stochastic} has the same convergence rate as Theorem~\ref{theorem:diana_strong}, except for the \textit{statistical} term $\cO\left(\nicefrac{(\omega + 1) \sigma^2}{\mu n}\right)$ that is the same as in \algname{DIANA} \citep{gorbunov2020unified, khaled2020unified} and does not depend on $\alpha$! 
In addition to the conditions on $\gamma$ stated in Theorem~\ref{theorem:diana_strong_stochastic}, achieving an $\varepsilon$-solution requires setting $\gamma \leq \Theta\left(\nicefrac{\varepsilon \mu n}{(\omega + 1) \sigma^2}\right)$.
Under this assumption, the convergence rate equals $T_{\textnormal{NEW}} + \cO\left(\frac{(\omega + 1) \sigma^2}{\mu^2 \varepsilon n}\log \frac{1}{\varepsilon}\right)$ and hence is no longer linear. However, this $\nicefrac{1}{\varepsilon}$ dependence is 
natural for stochastic methods given our assumptions \citep{gower2019sgd}.

\subsection{\algname{EF21-P + DCGD} and interpolation regime} \label{sec:interpol}

We also analyze a second method, \algname{EF21-P + DCGD}, which is based on \algname{DCGD} \citep{khaled2020better, alistarh2017qsgd}. One can think of \algname{DCGD} as \algname{DIANA} with parameter $\beta = 0.$ 
On one hand, the convergence of \algname{EF21-P + DCGD} is faster (see Theorem~\ref{theorem:dcgd_strong}) comparing to \algname{EF21-P + DIANA} (see Theorem~\ref{theorem:diana_strong}). On the other hand, we can guarantee the convergence only to a $\cO(\nicefrac{1}{n}\sum_{i=1}^n \norm{\nabla f_i(x^*)}^2)$ ``neighborhood'' of the solution. 
However, this ``neighborhood'' disappears in the interpolation regime, i.e., when $\nabla f_i(x^*) = 0$ for all $i \in [n]$. The interpolation regime is very common in modern  deep learning tasks \citep{brown2020language, bubeck2021universal}.

\subsection{Why do bidirectional methods work much better than \algname{GD}?}\label{sec:bidir-vs-GD}
\label{sec:why_better}

Our analysis of \algname{EF21-P + DIANA} covers the worst case scenario for the values of $L_{\max}$ and $\alpha$. Although $L_{\max}$ can be equal to $n L,$ in practice it tends to be much smaller. Similarly, the assumed bound on the parameter $\alpha$ equal to $\nicefrac{k}{d}$ for the Top$K$ compressor is also very conservative and the ``effective'' $\alpha$ is much larger \citep{beznosikov2020biased,PowerSGD,xu2021deepreduce}. Our claims are also supported by experiments from Section~\ref{sec:experiments}.

\begin{table*}[!ht]
    \caption{\textbf{General nonconvex Case.} The \# of communication rounds to get an $\varepsilon$-stationary point (${\rm E}[\norm{\nabla f(\widehat{x})}^2] \leq \varepsilon$). For simplicity, we assume that $f^*_i = f^*$ for all $i \in [n]$ and only the terms with respect to $\omega_{\textnormal{\wtos}}$ and $\omega_{\textnormal{\stow}}$ are shown. The parameters $\omega_{\textnormal{\wtos}}$ and $\omega_{\textnormal{\stow}}$ have the same meaning as in Table~\ref{table:strongly_convex_case}.}
    \label{table:general_case}
    \centering 
    \begin{threeparttable}
      \begin{tabular}{cccccc}
        \toprule
\bf      Method & \bf \# Communication Rounds& 
\bf      Limitations \\
       \toprule
       \algnamesmall{DCGD} \citep{khaled2020better} & $\frac{\Delta_0^2 \omega_{\textnormal{\wtos}} L L_{\max}}{n \varepsilon^2}$ & \makecell{No server-to-worker compression.} \\
    \midrule
       \makecell{\algnamesmall{MARINA}, \algnamesmall{DASHA} \\
       \citep{MARINA} \\ \citep{tyurin2022dasha}} &   $\frac{\Delta_0 \omega_{\textnormal{\wtos}} \widehat{L}}{\sqrt{n} \varepsilon}$ & \makecell{No server-to-worker compression.} \\
       \midrule
       \algnamesmall{dist-EF-SGD} \citep{zheng2019communication} & $\Omega\left(\frac{\max\{\omega_{\textnormal{\stow}}, \omega_{\textnormal{\wtos}}\}^2 L_{\max}}{\varepsilon^{3/2}}\right)$ & \makecell{Homogeneous regime only, \\ i.e., $f_i = f$ for all $i \in [n]$. \\ Bounded gradient assumption.} \\
       \midrule
       \algnamesmall{MCM} \citep{philippenko2021preserved} & 
       ${\scriptstyle \Delta_0} \left(\frac{\omega_{{\stow}}^{3/2}}{\varepsilon} {\scriptstyle +} \frac{\omega_{\textnormal{\stow}} \omega_{\textnormal{\wtos}}^{1/2}}{\sqrt{n} \varepsilon} {\scriptstyle +} \frac{\omega_{\textnormal{\wtos}}}{n \varepsilon}\right) {\scriptstyle L_{\max}}$ 
       & \begin{tabular}[x]{@{}c@{}}Homogeneous regime only.\end{tabular} \\
       \midrule
       \algnamesmall{NEOLITHIC} \citep{huang2022lower} & $\Omega\left(\frac{\Delta_0 L_{\max}}{\varepsilon}\right)$ & \makecell{Sends $\Omega(d)$ coordinates in each round\tnote{\color{blue}(a)}. \\Bounded gradient similarity assumption.} \\
       \midrule
       \algnamesmall{CD-Adam} \citep{wang2022communication} & $\Omega\left(\frac{\sqrt{d} \max\{\omega_{\textnormal{\stow}}, \omega_{\textnormal{\wtos}}\}^4}{\varepsilon^2}\right)$ & Bounded gradient assumption. \\
       \midrule
       \algnamesmall{EF21-BC} \citep{fatkhullin2021ef21} & $\frac{\Delta_0 \omega_{\wtos} \omega_{\stow} \widehat{L}}{\varepsilon}$ & --- \\
      \midrule
 \cellcolor{bgcolor1}     \algnamesmall{EF21-P + DCGD} (new) &  \cellcolor{bgcolor1} $\frac{\Delta_0^2 \omega_{\textnormal{\wtos}} L L_{\max}}{n \varepsilon^2} + \frac{\Delta_0 \omega_{\stow}L}{\varepsilon}$ & \cellcolor{bgcolor1}  --- \\
      \midrule
 \cellcolor{bgcolor1}     \algnamesmall{EF21-P + DCGD} (new) &  \cellcolor{bgcolor1} $\frac{\Delta_0 D \omega_{\wtos} L}{n \varepsilon} +  \frac{\Delta_0 \omega_{\stow} L}{\varepsilon}$ & \cellcolor{bgcolor1}  \makecell{Strong-growth assumption with parameter $D.$} \\
      \bottomrule
      \end{tabular}
  \begin{tablenotes}
  \scriptsize
  \item [{\color{blue}(a)}] In each communication round (outer loop for $k = 0, \dots, K - 1$ of Algorithm 2 in \citep{huang2022lower}), \algnamesmall{NEOLITHIC} sends a number of compressed vectors ($R$ in \citep{huang2022lower}) that is proportional to $\Omega\left(\nicefrac{1}{\alpha}\right)$ (this quantity is even worse since we ignore the logarithmic factors), where $\alpha$ is the parameter of a contractive compressor. For Top$K$ or Rand$K$, it means that \algnamesmall{NEOLITHIC} sends $\Omega(\nicefrac{d}{K})$ sparsified vectors with $K$ nonzero elements. This means that, in total, $\Omega(d)$ values are sent in each communication round. The total number of communication rounds $K$ is at least $\Omega\left(\nicefrac{\Delta_0 L_{\max}}{\varepsilon}\right).$ Note that the vanilla \algnamesmall{GD} method requires $\Omega\left(\nicefrac{\Delta_0 L}{\varepsilon}\right)$ rounds and sends $\cO(d)$ coordinates in each round.
  \end{tablenotes}          
  \end{threeparttable}
 \end{table*} 

\section{Analysis in the Nonconvex Setting} 
\label{sec:nonconvex}

In the nonconvex case, existing bidirectional methods suffer from the same problem as those used in the convex case (see Section~\ref{sec:bidirection_compression}): they either do not provide  server-to-workers compression at all, or the compressor errors/noises are coupled in a multiplicative fashion (see $\omega_{\textnormal{\wtos}}$ and $\omega_{\textnormal{\stow}}$ in Table~\ref{table:general_case}).

Instead of the convexity (see Assumption~\ref{ass:convex}), we will need the following assumption:
\begin{assumption}[Lower boundedness]
   \label{ass:lower_bound}
   There exist $f^* \in \R$ and $f^*_1, \dots, f^*_n \in \R$ such that $f(x) \geq f^*$ and $f_i(x) \geq f^*_i$ for all $x \in \R^d$ and for all $i \in [n].$
\end{assumption}

As in the convex setting, the theory of methods that only use workers-to-server compression is well examined. In the high accuracy regimes, the current state-of-the-art methods are \algname{MARINA} and \algname{DASHA} \citep{MARINA, tyurin2022dasha}; both return an $\varepsilon$-stationary point after $$\cO\left(\frac{\Delta_0 L}{\varepsilon} + \frac{\Delta_0\omega\widehat{L}}{\sqrt{n}\varepsilon}\right)$$ iterations, where $\Delta_0 \eqdef f(x^0) - f^*.$ In the low accuracy regimes, the current state-of-the-art method is \algname{DCGD} \citep{khaled2020better}, with iteration complexity $$\cO\left(\frac{\Delta_0 L}{\varepsilon} + \frac{\Delta_0 (\Delta_0 + \Delta^*) (1 + \omega) L L_{\max}}{n\varepsilon^2}\right),$$ where $\Delta^* \eqdef f^* - \frac{1}{n} \sum_{i=1}^n f^*_i.$ Note that \algname{DCGD} has worse dependence on $\varepsilon,$ but it scales much better with the number of workers $n.$

We now investigate how \algname{EF21-P} can help us in the general nonconvex case.
Let us recall that in the convex case, decoupling of the noises coming from two compression schemes can be achieved by combining \algname{EF21-P} with \algname{DIANA}.
In the nonconvex setting, we successfully combine \algname{EF21-P} and \algname{DCGD}. Moreover, we provide analysis of some particular cases where \algname{EF21-P + DCGD} can be the method of choice in the high accuracy regimes.

Whether or not it is possible to achieve the decoupling by combining our method with \algname{MARINA} or \algname{DASHA} is not yet known and we leave it to future work\footnote{We did not try to get the convergence rate of \algname{EF21-P + DIANA} in the nonconvex regime because it is well known that \algname{DIANA} is a suboptimal method in the nonconvex case \citep{MARINA}.}.

\subsection{\algname{EF21-P + DCGD} in the general nonconvex case}
\label{sec:nonconvex_dcgd_general}
Without any restrictive assumptions, we can prove the following convergence result:
\begin{restatable}{theorem}{THEOREMDCGDnonconvex}
        \label{theorem:dcgd_nonconvex}
Consider Algorithm~\ref{algorithm:dcgd_ef21_p} and let Assumptions~\ref{ass:lipschitz_constant}, \ref{ass:workers_lipschitz_constant} and \ref{ass:lower_bound} hold, $x^0 = w^0,$ and
 $\gamma = \min\left\{
                \frac{\alpha}{8 L},
                \frac{\sqrt{n}}{\sqrt{2\omega L L_{\max} T}},
                \frac{n \varepsilon}{32\Delta^* \omega L L_{\max}}
                \right\}.$
        If the number of iterations $$ \squeeze T \geq 
            \frac{48\Delta_{0} L}{\varepsilon}
        \max\left\{
        \frac{8}{\alpha},
        \frac{96\Delta_{0}\omega L_{\max}}{n \varepsilon},
        \frac{32\Delta^* \omega L_{\max}}{n \varepsilon}
        \right\},$$ then $\min\limits_{0\leq t\leq T-1} \Exp{\norm{\nabla f(x^{t})}^2} \leq \varepsilon$
 (The proof follows from Theorem~\ref{theorem:abcrate} and Proposition~\ref{prop:abccases} (Part 1)).
\end{restatable}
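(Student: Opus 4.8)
The plan is to obtain this theorem as a direct specialization of the generalized ABC framework developed in Section~\ref{sec:abc}. Concretely, I would (i) verify that \algname{EF21-P + DCGD} satisfies the ABC-type conditions required by the master rate Theorem~\ref{theorem:abcrate}, producing explicit constants $A,B,C$ (this is the content of Proposition~\ref{prop:abccases}, Part 1), and then (ii) substitute those constants together with the prescribed three-way minimum for $\gamma$ into Theorem~\ref{theorem:abcrate} and read off the iteration threshold. The whole point of having built the generalized framework is that, once the ABC constants are identified, the convergence statement is essentially mechanical.

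\textbf{Step 1: Verifying the ABC inequality.} For \algname{EF21-P + DCGD} we have $\beta = 0$ and $h_i^0 = 0$, so the server estimator collapses to $g^t = \frac{1}{n}\sum_{i=1}^n \cC_i^D(\nabla f_i(w^t))$, which is conditionally unbiased for $\nabla f(w^t)$ because each $\cC_i^D \in \mathbb{U}(\omega)$. I would first bound its second moment by splitting into mean and variance and using \eqref{eq:compressor} independently across workers, giving $\Exp{\sqnorm{g^t}} \leq \sqnorm{\nabla f(w^t)} + \frac{\omega}{n^2}\sum_{i=1}^n \sqnorm{\nabla f_i(w^t)}$. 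Applying the standard smooth/lower-bounded inequality $\sqnorm{\nabla f_i(w^t)} \leq 2L_{\max}(f_i(w^t) - f_i^*)$ (Assumptions~\ref{ass:workers_lipschitz_constant} and \ref{ass:lower_bound}) and the identity $\frac{1}{n}\sum_i(f_i(w^t) - f_i^*) = f(w^t) - f^* + \Delta^*$, I obtain an ABC bound evaluated at $w^t$, namely $\Exp{\sqnorm{g^t}} \leq 2A(f(w^t) - f^*) + B\sqnorm{\nabla f(w^t)} + C$ with $A = \frac{\omega L_{\max}}{n}$, $B = 1$, and $C = \frac{2\omega L_{\max}}{n}\Delta^*$.

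\textbf{Step 2: Invoking the master theorem.} Theorem~\ref{theorem:abcrate} rests on a Lyapunov function coupling the gap $f(x^t) - f^*$ with the \algname{EF21-P} error $\sqnorm{x^t - w^t}$. The descent step uses $L$-smoothness on $x^{t+1} = x^t - \gamma g^t$, but since $\Exp{g^t} = \nabla f(w^t)$ the cross term $\inp{\nabla f(x^t)}{\nabla f(w^t)}$ is handled by polarization together with $\sqnorm{\nabla f(x^t) - \nabla f(w^t)} \leq L^2\sqnorm{x^t - w^t}$, extracting $\tfrac12\sqnorm{\nabla f(x^t)}$ at the cost of a multiple of the error. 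The error itself contracts: $w^{t+1} - x^{t+1} = (\cC^P - \cI)(x^{t+1} - w^t)$ gives, via \eqref{eq:contractive_compressor}, $\Exp{\sqnorm{w^{t+1} - x^{t+1}}} \leq (1-\alpha)\sqnorm{x^{t+1} - w^t}$, and expanding $x^{t+1} - w^t = -\gamma g^t + (x^t - w^t)$ with Young's inequality produces a recursion with contraction factor $\Theta(\alpha)$. Choosing the Lyapunov weight so the two effects telescope, summing over $t = 0,\dots,T-1$ and dividing by $T$ yields $\min_t \Exp{\sqnorm{\nabla f(x^t)}} = \cO\!\left(\frac{\Delta_0}{\gamma T} + L\gamma C\right)$ plus the $A$-contribution. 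Substituting $A,B,C$ and the minimum for $\gamma$ — the $\frac{\alpha}{8L}$ piece controlling the error-contraction coupling, the $\frac{\sqrt{n}}{\sqrt{2\omega L L_{\max} T}}$ piece balancing $\frac{\Delta_0}{\gamma T}$ against the variance $A$-term, and the $\frac{n\varepsilon}{32\Delta^*\omega L L_{\max}}$ piece forcing the constant $C$-floor below $\varepsilon$ — reproduces exactly the three terms inside the stated $\max\{\cdot\}$ and hence the threshold on $T$.

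\textbf{Main obstacle.} The crux is the non-standard bias: $g^t$ is unbiased only with respect to the shifted model $w^t$, so one cannot descend directly in $f(x^t)$. The entire difficulty lies in controlling $\sqnorm{x^t - w^t}$ and coupling its $\alpha$-contraction with the descent in $f$ so that both recursions telescope simultaneously; the Lyapunov weights and the constraint $\gamma \leq \frac{\alpha}{8L}$ are precisely what make this coupling close. Building the general machinery that handles this perturbed-iterate structure (Theorem~\ref{theorem:abcrate}) is the technical heart — once it is available, the present theorem is a one-line corollary, which is exactly why the authors state it as following from Theorem~\ref{theorem:abcrate} and Proposition~\ref{prop:abccases}.
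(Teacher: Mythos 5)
Your proposal is correct and follows exactly the paper's route: Proposition~\ref{prop:abccases} (Part 1) yields the ABC constants $A=\frac{\omega L_{\max}}{n}$, $B=1$, $C=2A\Delta^*$ (your Step 1 reproduces that computation, including the identity $\frac{1}{n}\sum_i (f_i - f_i^*) = f - f^* + \Delta^*$), and substituting them into Theorem~\ref{theorem:abcrate} gives the stated stepsize and iteration threshold, with the $B$-dependent terms $\frac{1}{4BL}$ and $4B$ absorbed by $\frac{\alpha}{8L}$ and $\frac{8}{\alpha}$ since $\alpha\leq 1$. Your description of the inner workings of Theorem~\ref{theorem:abcrate} (Lyapunov coupling of $f(x^t)-f^*$ with $\norm{x^t-w^t}^2$) also matches the paper's Lemma~\ref{lemma:abc}.
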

We get the rate of \algname{DCGD} \citep{khaled2020better} plus an additional $\cO\left(\frac{\Delta_{0} L}{\alpha \varepsilon}\right)$ factor, thus obtaining the first method with bidirectional compression where the noises from the compressors are decoupled. Moreover, as noted before, this method provides the state-of-the-art rates when $\varepsilon$ is small or the number of workers $n$ is large.

\subsection{Strong growth condition}
\label{sec:nonconvex_dcgd_strong_growth}
Here we analyze \algname{EF21-P + DCGD} under the strong-growth condition \citep{schmidt2013fast}.
\begin{assumption}
    \label{ass:strong_growth}
    There exists $D>0$ such that $\frac{1}{n} \sum_{i=1}^n \norm{\nabla f_i(x)}^2 \leq D\norm{\nabla f(x)}^2$
    for all $x\in\R^d$.
\end{assumption}
While this assumption is restrictive and does not even hold for quadratic optimization problems, there exist numerous practical applications when it is reasonable. These include, for example, deep learning, where the number of parameters $d$ is so huge that the model can interpolate the training dataset \citep{schmidt2013fast, vaswani2019fast, meng2020fast}. To train such models, engineers use distributed environments, in which case communication becomes the main a bottleneck \citep{ramesh2021zero}. 
For these problems, our method is suitable and can be successfully applied.

\begin{restatable}{theorem}{THEOREMDCGDnonconvexSTRONGGROWTH}
        \label{theorem:dcgd_nonconvex_strong_growth}
Consider Algorithm~\ref{algorithm:dcgd_ef21_p}, let Assumptions~\ref{ass:lipschitz_constant}, \ref{ass:workers_lipschitz_constant}, \ref{ass:lower_bound} and \ref{ass:strong_growth} hold, and choose $x^0 = w^0$ and $
            \gamma = \min\left\{
                \frac{\alpha}{8 L},
                \frac{n}{4 D \omega L}
                \right\}.$
        If the number of iterations
        $$\squeeze T \geq 
            \frac{48\Delta_{0} L}{\varepsilon}
        \max\left\{
            \frac{8}{\alpha},\frac{4 D \omega}{n}
        \right\},$$ then $\min \limits_{0\leq t\leq T-1} \Exp{\norm{\nabla f(x^{t})}^2} \leq \varepsilon.$
        (The proof follows from Theorem~\ref{theorem:abcrate} and Proposition~\ref{prop:abccases} (Part 2)).
\end{restatable}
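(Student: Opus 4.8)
The plan is to recognize \algname{EF21-P + DCGD} (Algorithm~\ref{algorithm:dcgd_ef21_p}) as the generic \algname{EF21-P} mechanism \eqref{eq:ef21_primal} driven by the stochastic estimator $g^t = \frac{1}{n}\sum_{i=1}^n \cC_i^D(\nabla f_i(w^t))$, and to feed it into a unified perturbed-\algname{SGD} analysis. Since \algname{DCGD} corresponds to $\beta = 0$ with $h_i^0 = 0$, the shifts vanish and $g^t$ is unbiased for the gradient at the \emph{perturbed} point: $\Exp{g^t \mid w^t} = \frac{1}{n}\sum_i \nabla f_i(w^t) = \nabla f(w^t)$. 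The first step is to establish an ABC-type second-moment bound for $g^t$. Using unbiasedness and independence of the workers' compressors $\cC_i^D \in \mathbb{U}(\omega)$ together with \eqref{eq:compressor}, the variance splits across workers and is controlled by $\frac{\omega}{n^2}\sum_{i=1}^n \norm{\nabla f_i(w^t)}^2$; invoking the strong-growth Assumption~\ref{ass:strong_growth} then yields
\begin{align*}
\Exp{\norm{g^t}^2 \mid w^t} \leq \left(1 + \frac{\omega D}{n}\right)\norm{\nabla f(w^t)}^2,
\end{align*}
i.e.\ an ABC inequality with $A = 0$, $B = 1 + \tfrac{\omega D}{n}$, $C = 0$ (this is the content of Proposition~\ref{prop:abccases}, Part 2).

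Next I would run the perturbed descent step. Applying $L$-smoothness (Assumption~\ref{ass:lipschitz_constant}) to $x^{t+1} = x^t - \gamma g^t$ and using $\Exp{g^t \mid w^t} = \nabla f(w^t)$ produces a cross term $-\gamma \inp{\nabla f(x^t)}{\nabla f(w^t)}$. Because the gradient is evaluated at $w^t$ rather than at $x^t$, I split this by polarization and bound $\norm{\nabla f(x^t) - \nabla f(w^t)}^2 \leq L^2 \norm{x^t - w^t}^2$, giving
\begin{align*}
\Exp{f(x^{t+1}) \mid w^t} \leq f(x^t) - \frac{\gamma}{2}\norm{\nabla f(x^t)}^2 - \frac{\gamma}{2}\norm{\nabla f(w^t)}^2 + \frac{\gamma L^2}{2}\norm{x^t - w^t}^2 + \frac{L\gamma^2}{2}\Exp{\norm{g^t}^2 \mid w^t}.
\end{align*}
The $\norm{\nabla f(w^t)}^2$ contributions (the genuine $-\tfrac{\gamma}{2}\norm{\nabla f(w^t)}^2$ and the $B\norm{\nabla f(w^t)}^2$ coming from the ABC bound) are then absorbed by requiring $\gamma$ to be of order $1/(LB)$; since $B = 1 + \tfrac{\omega D}{n}$, this is exactly the source of the $\frac{n}{4D\omega L}$ branch of the stepsize. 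The residual difficulty is the error term $\frac{\gamma L^2}{2}\norm{x^t - w^t}^2$.

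To kill that term I would track the error-feedback quantity $E^t \eqdef \norm{x^t - w^t}^2$ with a second recursion. The model-shift update $w^{t+1} = w^t + \cC^P(x^{t+1} - w^t)$ with $\cC^P \in \mathbb{B}(\alpha)$ gives, via \eqref{eq:contractive_compressor}, $\Exp{E^{t+1} \mid \cdot} \leq (1-\alpha)\norm{x^{t+1} - w^t}^2$; expanding $x^{t+1} - w^t = (x^t - w^t) - \gamma g^t$ and applying Young's inequality with a parameter of order $\alpha$ turns this into a contraction of $E^t$ by a factor $1 - \tfrac{\alpha}{2}$ plus a $\tfrac{\gamma^2}{\alpha}\Exp{\norm{g^t}^2}$ term. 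I then form the Lyapunov function
\begin{align*}
\Phi^t \eqdef \left(f(x^t) - f^*\right) + \frac{M}{\gamma}\,E^t,
\end{align*}
and choose $M$ together with the stepsize so that the $-\tfrac{M\alpha}{2\gamma}E^t$ decrease dominates the $\frac{\gamma L^2}{2}E^t$ increase from the descent step; this is the requirement that yields the $\frac{\alpha}{8L}$ branch of the stepsize. Since the strong-growth case has $A = C = 0$, no statistical floor appears: telescoping $\Phi^t$ over $t = 0,\dots,T-1$ (with $\Phi^0 = \Delta_0$ because $x^0 = w^0$ makes the error term vanish, and $\Phi^T \geq 0$ by Assumption~\ref{ass:lower_bound}) leaves only the surviving $-\tfrac{\gamma}{2}\norm{\nabla f(x^t)}^2$ terms, so that $\min_{t} \Exp{\norm{\nabla f(x^t)}^2} \leq \frac{1}{T}\sum_t \Exp{\norm{\nabla f(x^t)}^2} = \cO\!\left(\frac{\Delta_0}{\gamma T}\right)$. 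Substituting $\gamma = \min\{\tfrac{\alpha}{8L},\, \tfrac{n}{4D\omega L}\}$, i.e.\ $\tfrac{1}{\gamma} = \max\{\tfrac{8L}{\alpha},\, \tfrac{4D\omega L}{n}\}$, and demanding the right-hand side be $\leq \varepsilon$ reproduces the threshold $T \geq \frac{48\Delta_0 L}{\varepsilon}\max\{\tfrac{8}{\alpha},\, \tfrac{4D\omega}{n}\}$, the constant $48$ being whatever the careful bookkeeping produces.

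I expect the main obstacle to be precisely the coupling between $x^t$ and the shift $w^t$: as the excerpt emphasizes, the perturbation $\zeta^t = w^t - x^t$ is nonzero-mean, depends on $x^t$, and evolves in a Markovian fashion, so the standard unbiased-\algname{SGD} analysis of \cite{khaled2020better} does not apply. The entire art lies in the two-term Lyapunov function and in balancing the error-feedback contraction rate $\alpha$ against the smoothness-induced term $L^2\norm{x^t - w^t}^2$ without spoiling the absorption of the ABC term. This is exactly why two distinct stepsize constraints appear in the hypothesis — one $\propto \alpha/L$, coming from the perturbation balance, and one $\propto n/(D\omega L)$, coming from the ABC constant $B$ — and it is the content abstracted into the general Theorem~\ref{theorem:abcrate}, from which the present statement then follows by plugging in the constants $A=0$, $B = 1+\tfrac{\omega D}{n}$, $C=0$.
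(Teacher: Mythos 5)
Your overall architecture is the same as the paper's: establish the ABC constants $A=0$, $B = 1 + \frac{D\omega}{n}$, $C=0$ (Proposition~\ref{prop:abccases}, Part 2), then feed them into a general perturbed-\algname{SGD} theorem (Theorem~\ref{theorem:abcrate}, whose core is Lemma~\ref{lemma:abc}) built on the Lyapunov function $f(x^t)-f^* + \mathrm{const}\cdot\norm{w^t-x^t}^2$; your descent step (polarization plus $\norm{\nabla f(x^t)-\nabla f(w^t)}^2 \le L^2\norm{x^t-w^t}^2$) also matches the paper. However, there is a genuine gap in your treatment of the error-feedback recursion. You apply Young's inequality directly to $\norm{(x^t-w^t)-\gamma g^t}^2$, which produces the term $\frac{2\gamma^2}{\alpha}\Exp{\norm{g^t}^2} \le \frac{2\gamma^2 B}{\alpha}\norm{\nabla f(w^t)}^2$: the \emph{whole} second moment of $g^t$, including its $B$-part, gets multiplied by $\nicefrac{2}{\alpha}$. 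The paper instead first applies the variance decomposition \eqref{eq:vardecomp}, $\Exp{\norm{x^t-w^t-\gamma g^t}^2} = \norm{x^t-w^t-\gamma\nabla f(w^t)}^2 + \gamma^2\Exp{\norm{g^t-\nabla f(w^t)}^2}$, and applies Young only to the deterministic drift; the variance, which equals $(B-1)\norm{\nabla f(w^t)}^2$ when $A=C=0$, then enters with coefficient $(1-\alpha)\gamma^2$ rather than $\frac{2\gamma^2}{\alpha}$, giving the coefficient $\frac{2}{\alpha} + (B-1)(1-\alpha)$ in place of your $\frac{2B}{\alpha}$.

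This is not a constant-factor issue. With your looser bound, absorbing the resulting $\norm{\nabla f(w^t)}^2$ term after weighting the error recursion by the Lyapunov coefficient $\approx \frac{\gamma L^2}{\alpha}$ forces $\frac{\gamma^2 L^2 B}{\alpha^2} \lesssim 1$, i.e.\ $\gamma \lesssim \frac{\alpha}{L\sqrt{B}}$, and this is \emph{not} implied by the theorem's stepsize $\gamma = \min\bigl\{\frac{\alpha}{8L}, \frac{n}{4D\omega L}\bigr\}$: for instance, with $\alpha = 10^{-2}$ and $\frac{D\omega}{n} = 10^{2}$ the prescribed stepsize is $\frac{1}{800L}$ while your argument needs roughly $\gamma \le \frac{1}{2300L}$, so the telescoping does not close. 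Moreover, repairing your version by shrinking $\gamma$ to $\sim\frac{\alpha}{L\sqrt{B}}$ would yield a complexity containing the term $\frac{\sqrt{1+D\omega/n}}{\alpha}$, i.e.\ a multiplicative coupling between the server-to-workers and workers-to-server compression noises --- precisely what this theorem is designed to avoid. The fix is one line: condition on the randomness of $\cC_1^D,\dots,\cC_n^D$ and use \eqref{eq:vardecomp} \emph{before} Young's inequality, as in the paper's Lemma~\ref{lemma:abc} (see the derivation of \eqref{eq:abcwxdiff}); the rest of your argument then goes through and the stated stepsize and constants follow.
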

Comparing to Section~\ref{sec:nonconvex_dcgd_general}, the above result shows an improved dependence on $\varepsilon$ under the strong growth assumption.

\subsection{Homogeneous regime}
\label{sec:nonconvex_dcgd_homogeneous}
Another important problem where our method can be useful is distributed optimization in the data homogeneous regime. In particular, we consider the case when $f_i = f$ for all $i \in [n]$ and when instead of exact gradients, stochastic gradients are used. This assumption holds, for instance, for distributed machine learning problems where every worker samples mini-batches from a large shared dataset \citep{recht2011hogwild, goyal2017accurate}.

\begin{restatable}{theorem}{THEOREMDCGDnonconvexHOMOG}
        \label{theorem:dcgd_nonconvex_homog}
        Let us consider Algorithm~\ref{algorithm:dcgd_ef21_p} with the stochastic gradients $\widetilde{\nabla} f$ instead of the exact gradients $\nabla f$. Suppose that Assumptions~\ref{ass:lipschitz_constant}, \ref{ass:workers_lipschitz_constant}, \ref{ass:stochastic_unbiased_and_variance_bounded} and \ref{ass:lower_bound} hold and $f_i =f $ for all $i \in [n]$. Set $x^0 = w^0$ and let
        $
            \gamma = \min\left\{
                \frac{\alpha}{8 L},
                \frac{1}{4 \left(\frac{\omega}{n} + 1\right)L},
                \frac{n \varepsilon}{16 \left(\omega + 1\right) \sigma^2 L}
                \right\}.
        $
        If the number of iterations $$\squeeze 
            T \geq 
            \frac{48\Delta_{0} L}{\varepsilon}
        \max\left\{
            \frac{8}{\alpha},4\left(\frac{\omega}{n} + 1\right),
        \frac{16 (\omega + 1)\sigma^2}{n \varepsilon}
        \right\},$$ then $\min \limits_{0\leq t\leq T-1} \Exp{\norm{\nabla f(x^{t})}^2} \leq \varepsilon.$
 (The proof follows from Theorem~\ref{theorem:abcrate} and Proposition~\ref{prop:abccases} (Part 4)).
\end{restatable}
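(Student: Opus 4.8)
The plan is to reduce the statement to the general ``ABC + \algname{EF21-P}'' convergence result (Theorem~\ref{theorem:abcrate}) by identifying the explicit ABC-type constants of \algname{EF21-P + DCGD} in the homogeneous stochastic regime; this identification is exactly Part~4 of Proposition~\ref{prop:abccases}. Since \algname{DCGD} corresponds to $\beta = 0$ and $h_i^t \equiv 0$, in the homogeneous case ($f_i = f$) the server's estimator is $g^t = \frac{1}{n}\sum_{i=1}^n \cC_i^D(\widetilde{\nabla} f(w^t))$, where each worker independently samples its stochastic gradient and an unbiased compressor $\cC_i^D \in \mathbb{U}(\omega)$. The first thing I would check is conditional unbiasedness at the \emph{shifted} iterate: taking expectation over the compressors and the stochastic gradients gives $\Exp{g^t \mid w^t} = \frac{1}{n}\sum_{i=1}^n \nabla f(w^t) = \nabla f(w^t)$. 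This is the non-standard feature the framework is built to exploit, namely that $g^t$ is unbiased for $\nabla f(w^t)$ and \emph{not} for $\nabla f(x^t)$.

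\textbf{The core variance bound.} The main concrete computation is the second moment of $g^t$, which I would obtain by decomposing the noise across its three independent sources. Conditioning first on the stochastic gradient and applying $\cC_i^D \in \mathbb{U}(\omega)$ worker-by-worker, then using Assumption~\ref{ass:stochastic_unbiased_and_variance_bounded}, each per-worker term obeys
\begin{align*}
\Exp{\norm{\cC_i^D(\widetilde{\nabla} f(w^t)) - \nabla f(w^t)}^2 \mid w^t}
&\leq \omega\,\Exp{\norm{\widetilde{\nabla} f(w^t)}^2 \mid w^t} + \sigma^2 \\
&\leq \omega\norm{\nabla f(w^t)}^2 + (\omega+1)\sigma^2,
\end{align*}
the cross terms vanishing by the conditional unbiasedness of the compressor. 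Averaging over the $n$ independent workers produces a factor $\nicefrac{1}{n}$, so that
\[
\Exp{\norm{g^t}^2 \mid w^t} \leq \left(1 + \tfrac{\omega}{n}\right)\norm{\nabla f(w^t)}^2 + \tfrac{(\omega+1)\sigma^2}{n}.
\]
This is precisely an ABC inequality \emph{evaluated at $w^t$} with $A = 0$, $B = 1 + \nicefrac{\omega}{n}$, and $C = \nicefrac{(\omega+1)\sigma^2}{n}$; homogeneity is what annihilates the $A$-term (there is no gradient dissimilarity and $\Delta^* = 0$), which is why the resulting $T$-bound has no $\Delta^*$ contribution, unlike the general Theorem~\ref{theorem:dcgd_nonconvex}.

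\textbf{Applying the master theorem.} With these constants in hand, the remaining step is to invoke Theorem~\ref{theorem:abcrate}, which already carries out the delicate part of the argument: it runs a descent lemma on $f$ at the perturbed point $w^t$ and couples it, through a single Lyapunov function, with the contraction of the \algname{EF21-P} primal error $\norm{x^t - w^t}^2$ guaranteed by $\cC^P \in \mathbb{B}(\alpha)$, yielding
\[
\frac{1}{T}\sum_{t=0}^{T-1} \Exp{\norm{\nabla f(x^t)}^2} \leq \cO\left(\frac{\Delta_0}{\gamma T} + \gamma L C\right)
\]
under $\gamma \leq \min\{\frac{\alpha}{8L},\, \frac{1}{4BL}\}$. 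I would then substitute $B$ and $C$: the three stepsize thresholds in the theorem are exactly $\frac{\alpha}{8L}$ (primal-error contraction), $\frac{1}{4\left(\frac{\omega}{n}+1\right)L} = \frac{1}{4BL}$ (ABC descent), and $\frac{n\varepsilon}{16(\omega+1)\sigma^2 L} = \frac{\varepsilon}{16CL}$ (chosen so that the variance term satisfies $\gamma L C \leq \frac{\varepsilon}{16}$). Writing $\frac{1}{\gamma} = L\max\{\frac{8}{\alpha},\, 4B,\, \frac{16C}{\varepsilon}\}$ and requiring $\frac{\Delta_0}{\gamma T} = \cO(\varepsilon)$ produces the stated lower bound on $T$ with the three terms in the maximum and the constant $48$.

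\textbf{Main obstacle.} The genuine difficulty is \emph{not} located in this theorem but is already absorbed into Theorem~\ref{theorem:abcrate}: controlling the bias $\Exp{g^t} = \nabla f(w^t) \neq \nabla f(x^t)$ induced by the Markovian primal perturbation of \algname{EF21-P}, and designing the Lyapunov function whose decrease forces $\norm{x^t - w^t}^2 \to 0$ precisely when $\gamma = \cO(\alpha/L)$. Within the present proof, the only point requiring care is the variance decomposition above, where one must cleanly separate the compression noise (scaling with $\omega$), the stochastic-gradient noise (scaling with $\sigma^2$), and the $\nicefrac{1}{n}$ gain from independent averaging, ensuring that every cross term is eliminated by the appropriate conditional unbiasedness before the individual bounds are applied.
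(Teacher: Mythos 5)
Your proposal is correct and follows essentially the same route as the paper: it establishes the ABC constants $A=0$, $B=1+\nicefrac{\omega}{n}$, $C=\nicefrac{(\omega+1)\sigma^2}{n}$ for the homogeneous stochastic estimator (this is exactly Part~4 of Proposition~\ref{prop:abccases}, up to a cosmetic reordering of the variance decomposition — you bundle compression and sampling noise per worker before averaging, while the paper splits off the aggregate compression noise first via the tower property) and then substitutes them into Theorem~\ref{theorem:abcrate}, where setting $A=0$ removes the $\nicefrac{1}{\sqrt{2ALT}}$ stepsize constraint and the $\nicefrac{96\Delta_0 A}{\varepsilon}$ term, yielding precisely the stated stepsize and iteration bound. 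Your identification of where the real difficulty lives (the Lyapunov coupling of the descent at $w^t$ with the contraction of $\Exp{\norm{x^t-w^t}^2}$, already absorbed into Theorem~\ref{theorem:abcrate}) matches the paper's intent.
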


Under the same assumptions, \algname{MCM} method by \citep{philippenko2020artemis} with bidirectional compression guarantees the convergence rate (up to constant factors)
$$
\frac{\Delta_{0} L}{\varepsilon}
\max\left\{
\left(\frac{\omega_{\wtos}}{n} + 1\right),
\omega_{\stow}^{3/2},
\frac{\omega_{\stow}\omega_{\wtos}^{1/2}}{\sqrt{n}},
\frac{(\omega_{\wtos} + 1)\sigma^2}{n \varepsilon}
\right\}.
$$
Comparing this with our result, the last \textit{statistical} term $\nicefrac{(\omega + 1)\sigma^2}{n \varepsilon}$ is the same in both cases, but we significantly improve the other \textit{communication} terms (take $\omega = \omega_{\wtos}$ and $\alpha = (\omega_{\stow} + 1)^{-1}$ in Theorem~\ref{theorem:dcgd_nonconvex_homog}).

\begin{figure}[t]
    \centering
    \begin{subfigure}{.49\textwidth}
      \centering
      \includegraphics[width=\textwidth]{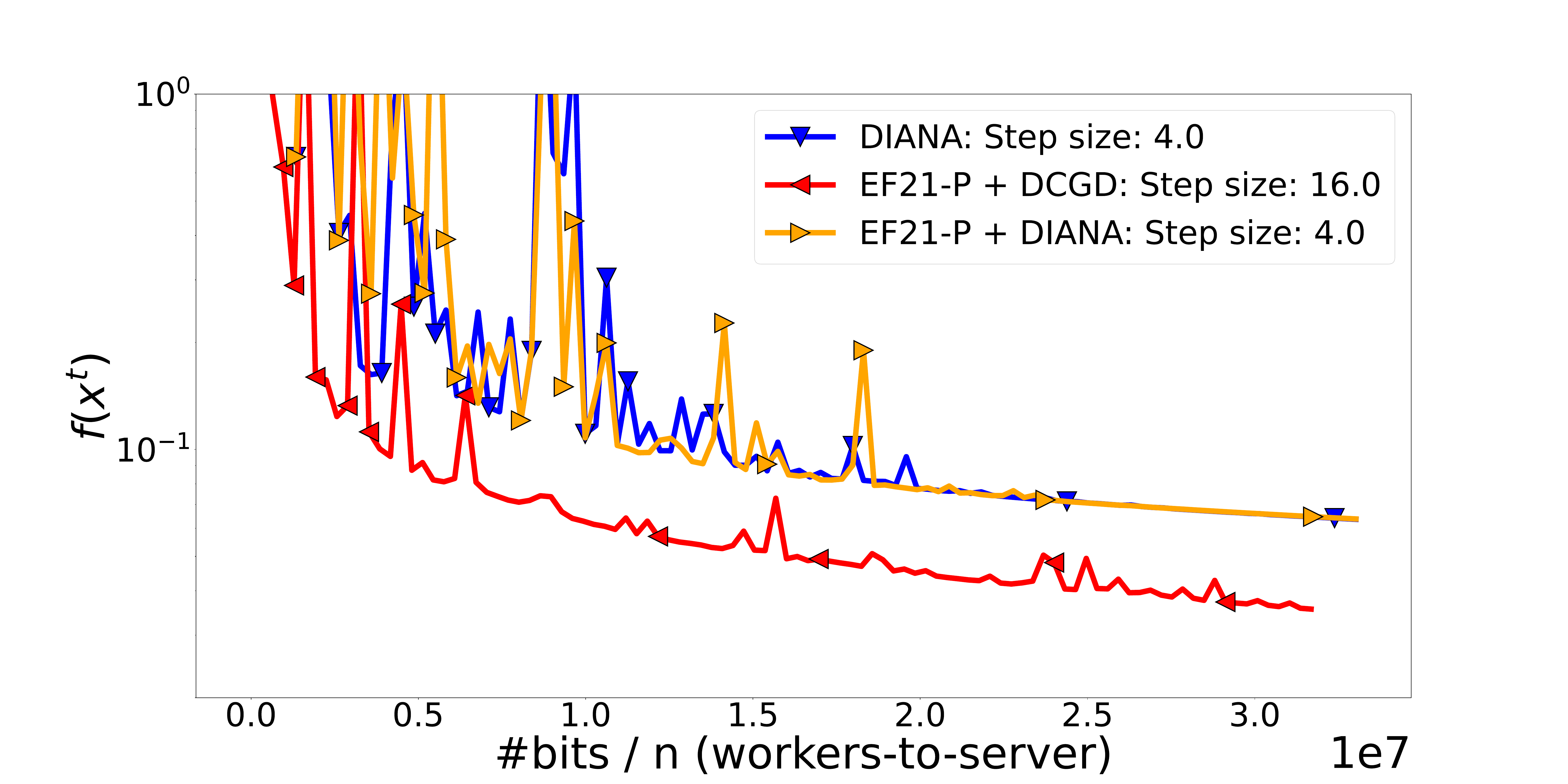}
    \end{subfigure}
    \begin{subfigure}{.49\textwidth}
        \centering
        \includegraphics[width=\textwidth]{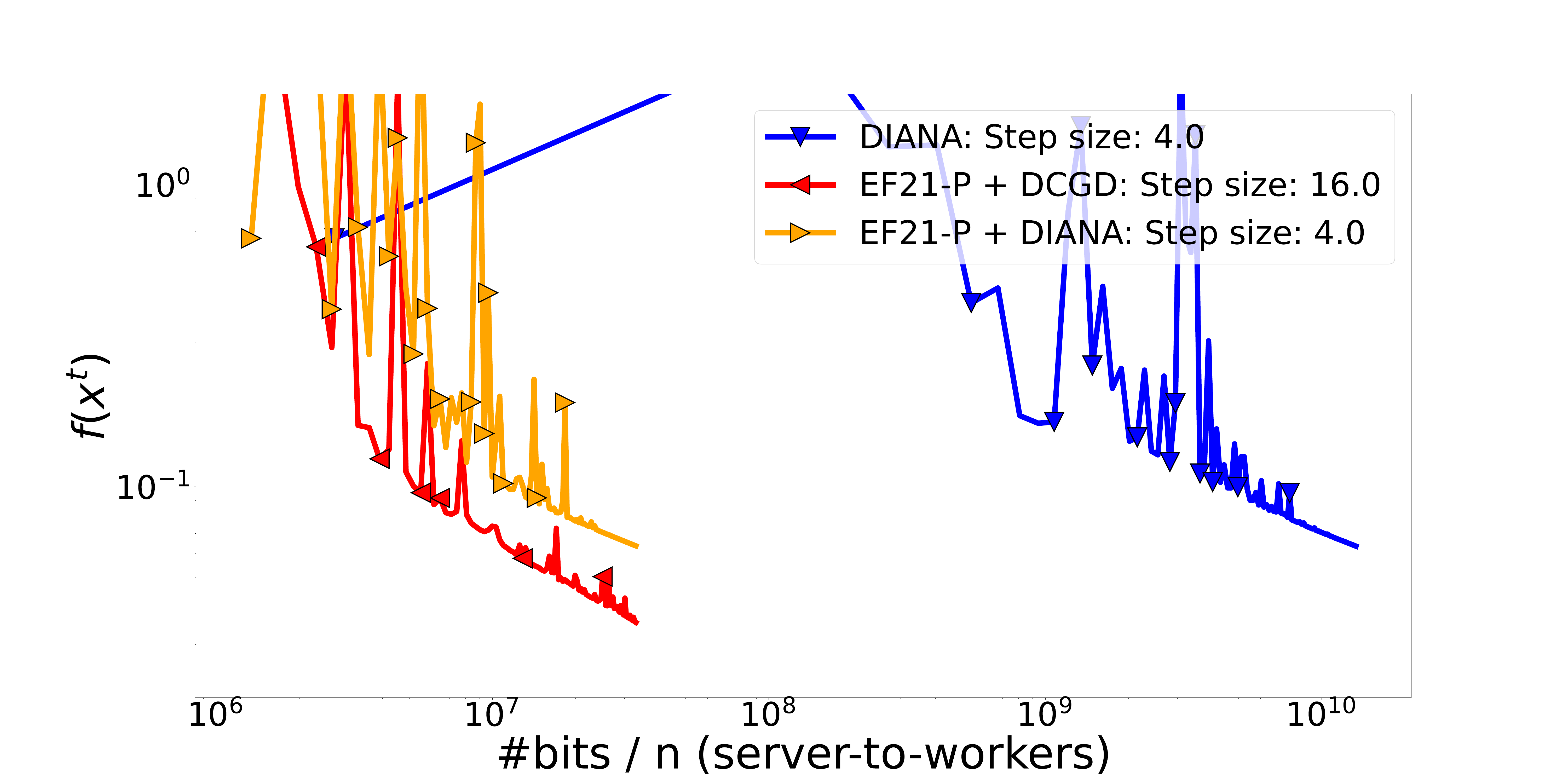}
      \end{subfigure}\hfill
    \caption{Logistic Regression with \textit{real-sim} dataset. Number of workers: $n = 100.$ Sparsification level was set to $K = 100$ for all compressors.}
    \label{fig:diana_real_sim}
\end{figure}

\section{Experimental Highlights}\label{sec:experiments}

We first provide a few highlights from our experiments.  For more details and experiments, we refer to Section~\ref{sec:experiments_appendix}, where we compare our algorithms with the previous state-of-the-art method \algname{MCM} and solve a nonconvex task.

In particular, we consider the logistic regression task with \textit{real-sim} (\# of features $ = $ 20,958, \# of samples equals 72,309) from LIBSVM dataset \citep{chang2011libsvm}. Each plot represents the relations between function values and the total number of coordinates transmitted from and to the server. In all algorithms, the Rand$K$ compressor is used to compress information from the workers to the server. In the case of \algname{EF21-P + DIANA} and \algname{EF21-P + DCGD}, we take Top$K$ compressor to compress from the server to the workers. 

The results are presented in Figure~\ref{fig:diana_real_sim}. The main conclusion from these experiments is that \algname{EF21-P + DIANA} and \algname{EF21-P + DCGD} converge to a solution not slower than \algname{DIANA}, even though \algname{DIANA} does not compress vectors sent from the server to the workers! This means that \algname{EF21-P + DIANA} and \algname{EF21-P + DCGD} can send $\times400$ less values from the server to the workers for free! Moreover, we see that \algname{EF21-P + DCGD} converges faster than its competitors. Similar experimental results were observed in \citep{philippenko2021preserved}.

\section{Future Work and Possible Extensions}
In this paper, many important features of distributed and federated learning were not investigated in detail. These include variance reduction of stochastic gradients \citep{horvath2019stochastic,tyurin2022dasha}, acceleration \citep{li2021canita, ADIANA}, local steps \citep{murata2021bias}, partial participation \citep{mcmahan2017communication, tyurin2022computation} and asynchronous communication \citep{koloskova2022sharper}. While some are simple exercises and can be easily added to our methods, many of them deserve further investigation and separate work. 

Further, note that several authors, including \citet{szlendak2021permutation,richtarik20223pc, condat2022ef}, considered somewhat different families of compressors than those we consider here. We believe that the results and discussion from our paper can be adapted to these families.

\subsection*{Acknowledgements}
The work of P. Richt\'{a}rik was partially supported by the KAUST Baseline Research Fund Scheme and by the SDAIA-KAUST Center of Excellence in Data Science and Artificial Intelligence. The work of A. Tyurin was supported by the Extreme Computing Research Center (ECRC) at KAUST. The work of K. Gruntkowska was supported by the Visiting Student Research Program (VSRP) at KAUST. We thank Laurent Condat (KAUST) for the suggestion to improve the presentation of the theorems.

\bibliography{paper}
\bibliographystyle{icml2023}

\newpage
\appendix
\onecolumn

\tableofcontents
\newpage

\section{Further Experiments} \label{sec:experiments_appendix}

We now provide the results of our experiments on practical machine learning tasks with LIBSVM datasets \citep{chang2011libsvm} (under the 3-clause BSD license).  Each plot represents the relations between function values and the total number of coordinates transmitted from and to the server. The parameters of the algorithms are as suggested by the theory, except for the stepsizes $\gamma$ that we fine-tune from a set $\{2^i\,|\,i \in [-10, 10]\}$.

We solve the logistic regression problem:
    $$f_i(x_1, \dots, x_{c}) \eqdef -\frac{1}{m} \sum_{j=1}^m\log\left(\frac{\exp\left(a_{ij}^\top x_{y_{ij}}\right)}{\sum_{y=1}^c \exp\left(a_{ij}^\top x_{y}\right)}\right), $$
where $x_1, \dots, x_{c} \in \R^{d}$, $c$ is the number of unique labels,
$a_{ij} \in \R^{d}$ is a feature of a sample on the $i$\textsuperscript{th} worker, $y_{ij}$ is a corresponding label and $m$ is the number of samples located on the $i$\textsuperscript{th} worker. In all algorithms, the Rand$K$ compressor is used to compress information from the workers to the server. In the case of \algname{EF21-P + DIANA} and \algname{EF21-P + DCGD}, we take Top$K$ compressor to compress from the server to the workers. The performance of algorithms is compared on \textit{w8a} (\# of features $ = 300$, \# of samples equals $\num[group-separator={,}]{49749}$), CIFAR10 \citep{krizhevsky2009learning} (\# of features $ = 3072$, \# of samples equals $\num[group-separator={,}]{50000}$), and \textit{real-sim} (\# of features $ = 20958$, \# of samples equals $\num[group-separator={,}]{72309}$) datasets.

The results are presented in Figures~\ref{fig:diana_real_sim}, \ref{fig:diana_w8a} and \ref{fig:diana_cifar10}. The conclusions are the same as in Section~\ref{sec:experiments}. One can see that \algname{EF21-P + DIANA} and \algname{EF21-P + DCGD} converge to a solution not slower than \algname{DIANA}, even though \algname{DIANA} does not compress vectors sent from the server to the workers! \algname{EF21-P + DIANA} and \algname{EF21-P + DCGD} send $\times100-\times1000$ less values from the server to the workers!

\begin{figure}[h]
    \centering
    \begin{subfigure}{.49\textwidth}
      \centering
      \includegraphics[width=\textwidth]{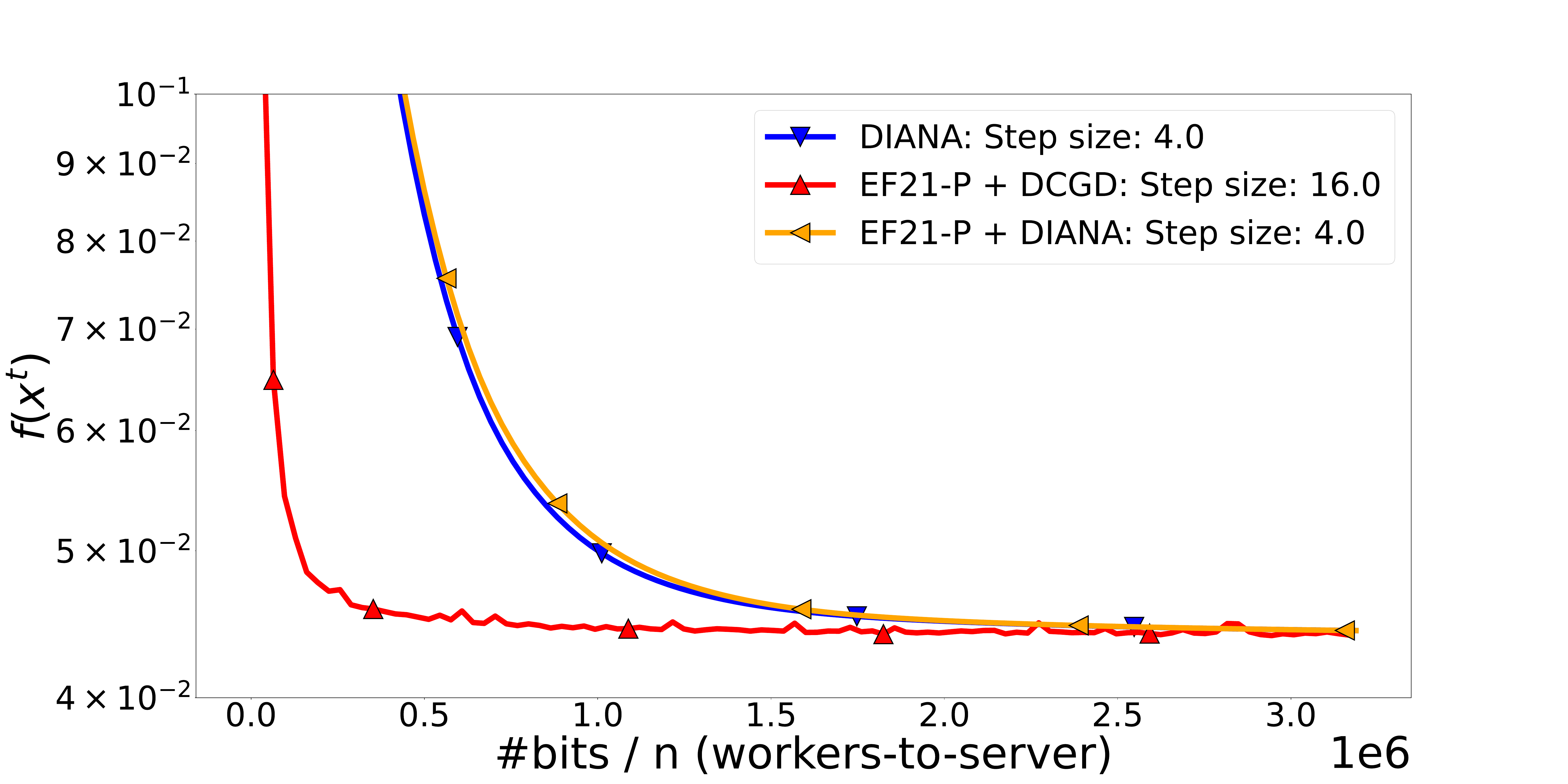}
    \end{subfigure}
    \begin{subfigure}{.49\textwidth}
        \centering
        \includegraphics[width=\textwidth]{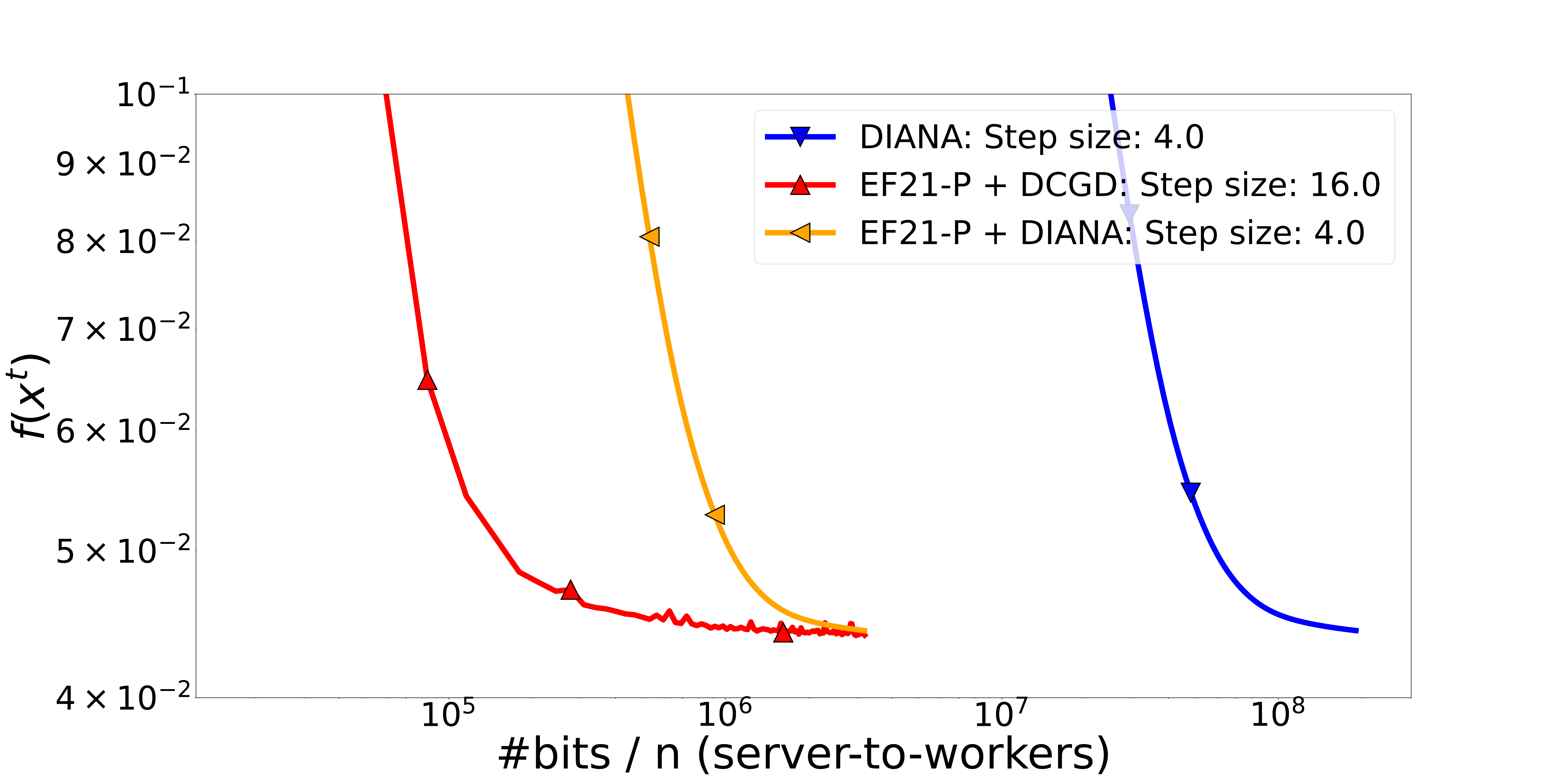}
      \end{subfigure}\hfill
    \caption{Logistic Regression with \textit{w8a} dataset. \# of workers $n = 10.$ $K = 10$ in all compressors.}
    \label{fig:diana_w8a}
\end{figure}

\begin{figure}[h]
    \centering
    \begin{subfigure}{.49\textwidth}
      \centering
      \includegraphics[width=\textwidth]{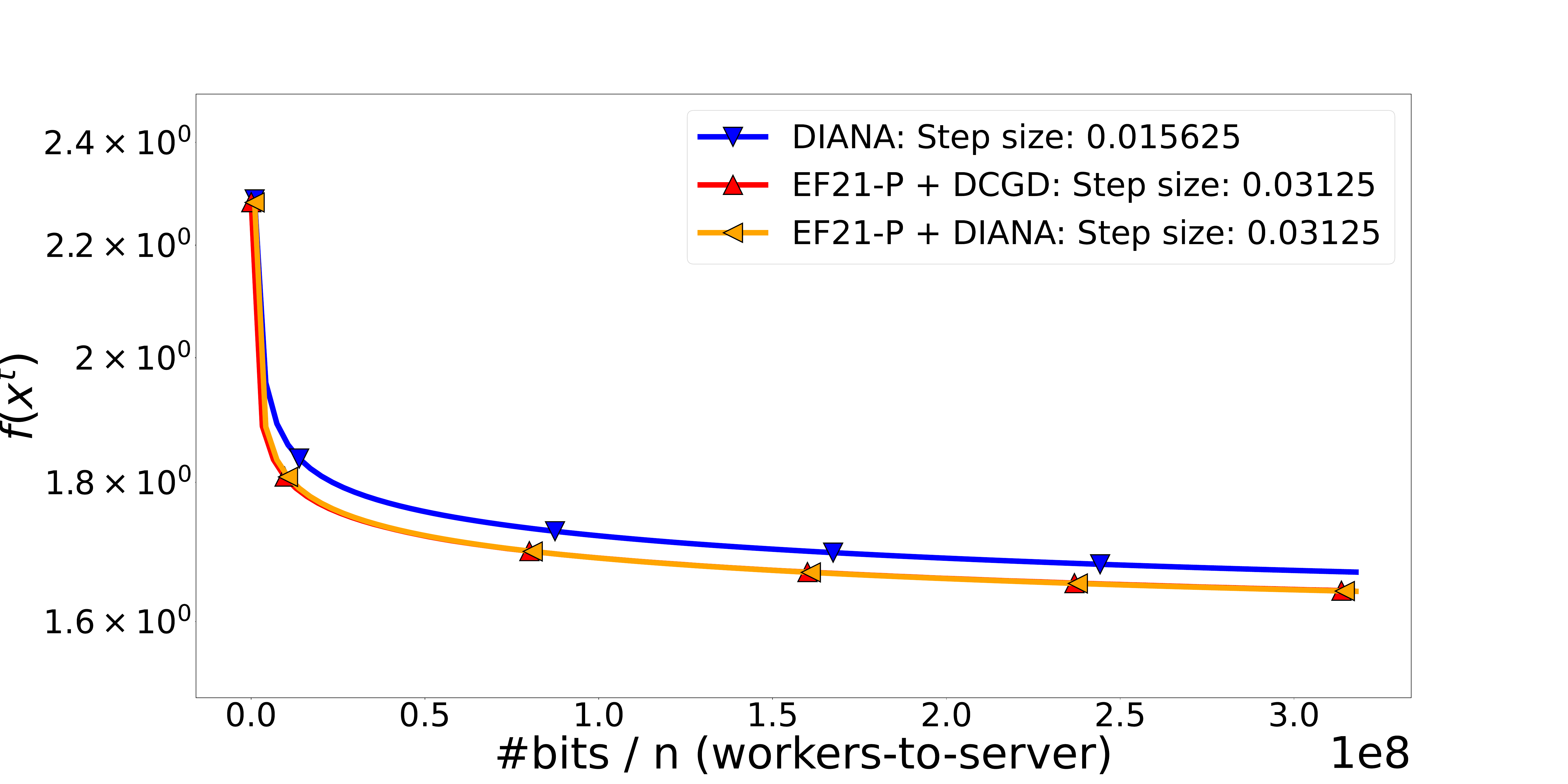}
    \end{subfigure}
    \begin{subfigure}{.49\textwidth}
        \centering
        \includegraphics[width=\textwidth]{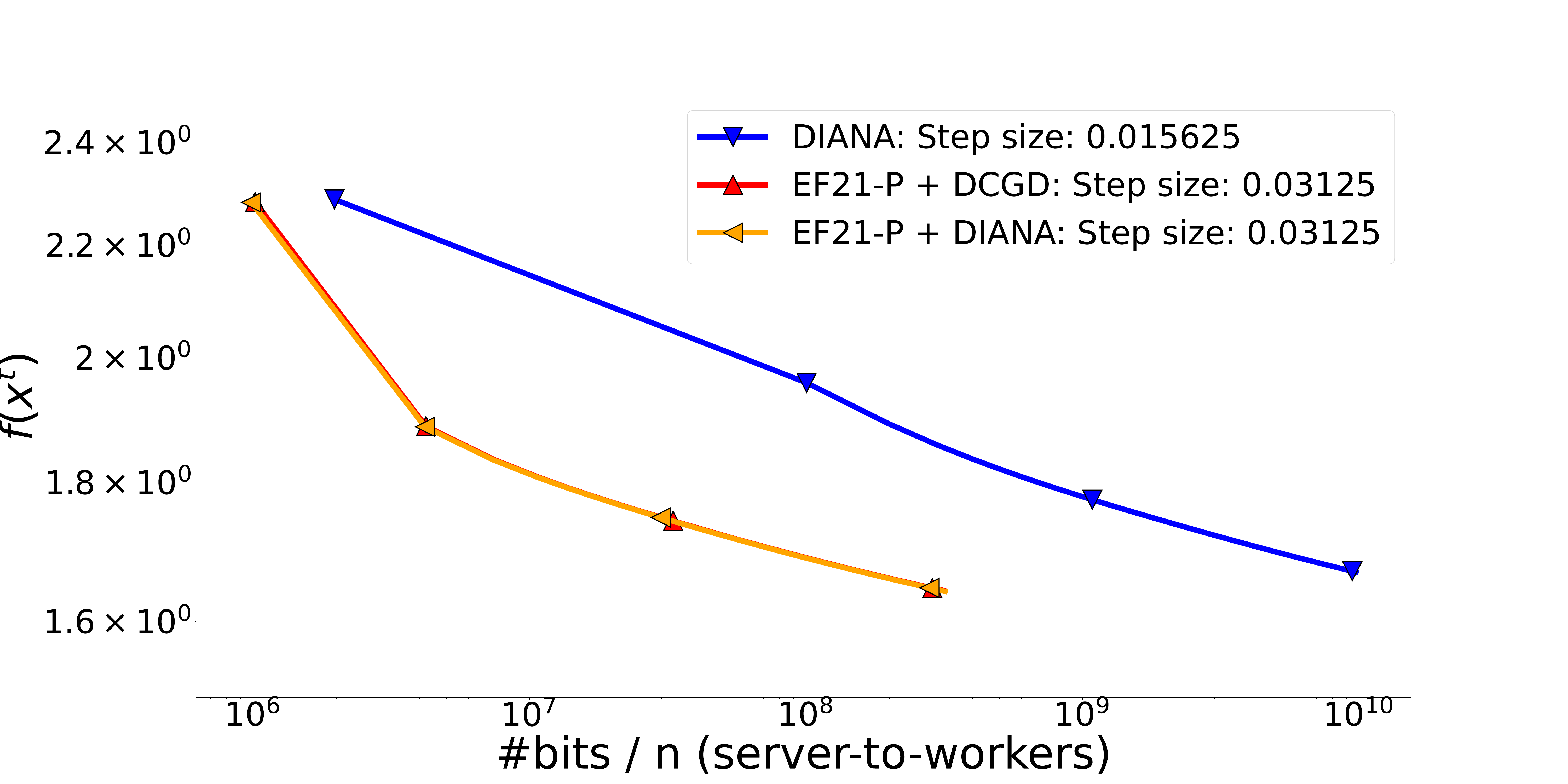}
      \end{subfigure}\hfill
    \caption{Logistic Regression with \textit{CIFAR10} dataset. \# of workers $n = 10.$ $K = 1000$ in all compressors.}
    \label{fig:diana_cifar10}
\end{figure}

We also compare our algorithm to \algname{MCM}. Since \algname{MCM} does not support contractive compressors defined in~\eqref{eq:biased_compressor}, we use Rand$K$ instead of the Top$K$ compressor in the server-to-workers compression. Figure~\ref{fig:mcm_real_sim} shows that our new algorithms converge faster.

\begin{figure}[h]
    \centering
    \begin{subfigure}{.49\textwidth}
      \centering
      \includegraphics[width=\textwidth]{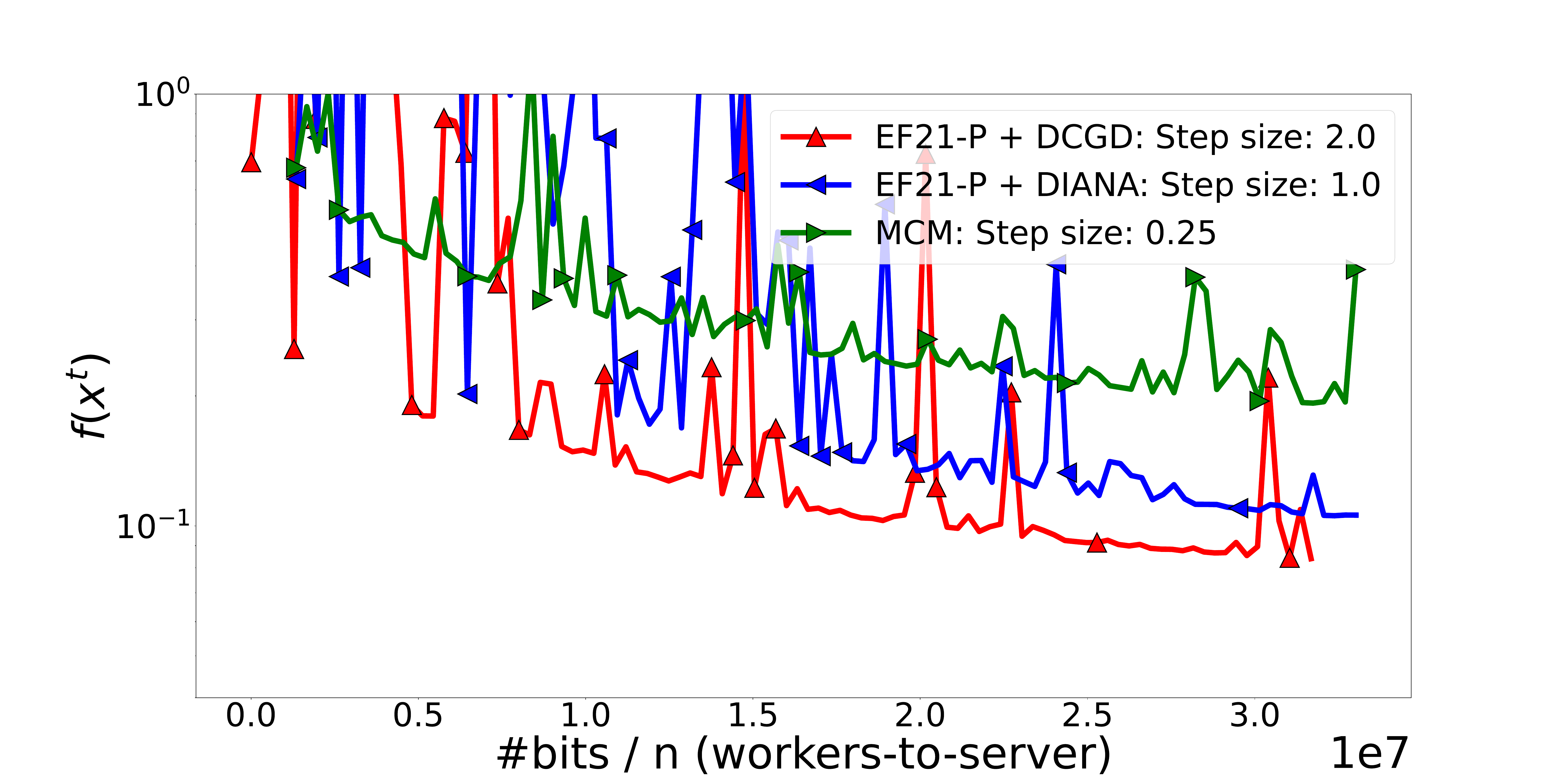}
    \end{subfigure}
    \begin{subfigure}{.49\textwidth}
        \centering
        \includegraphics[width=\textwidth]{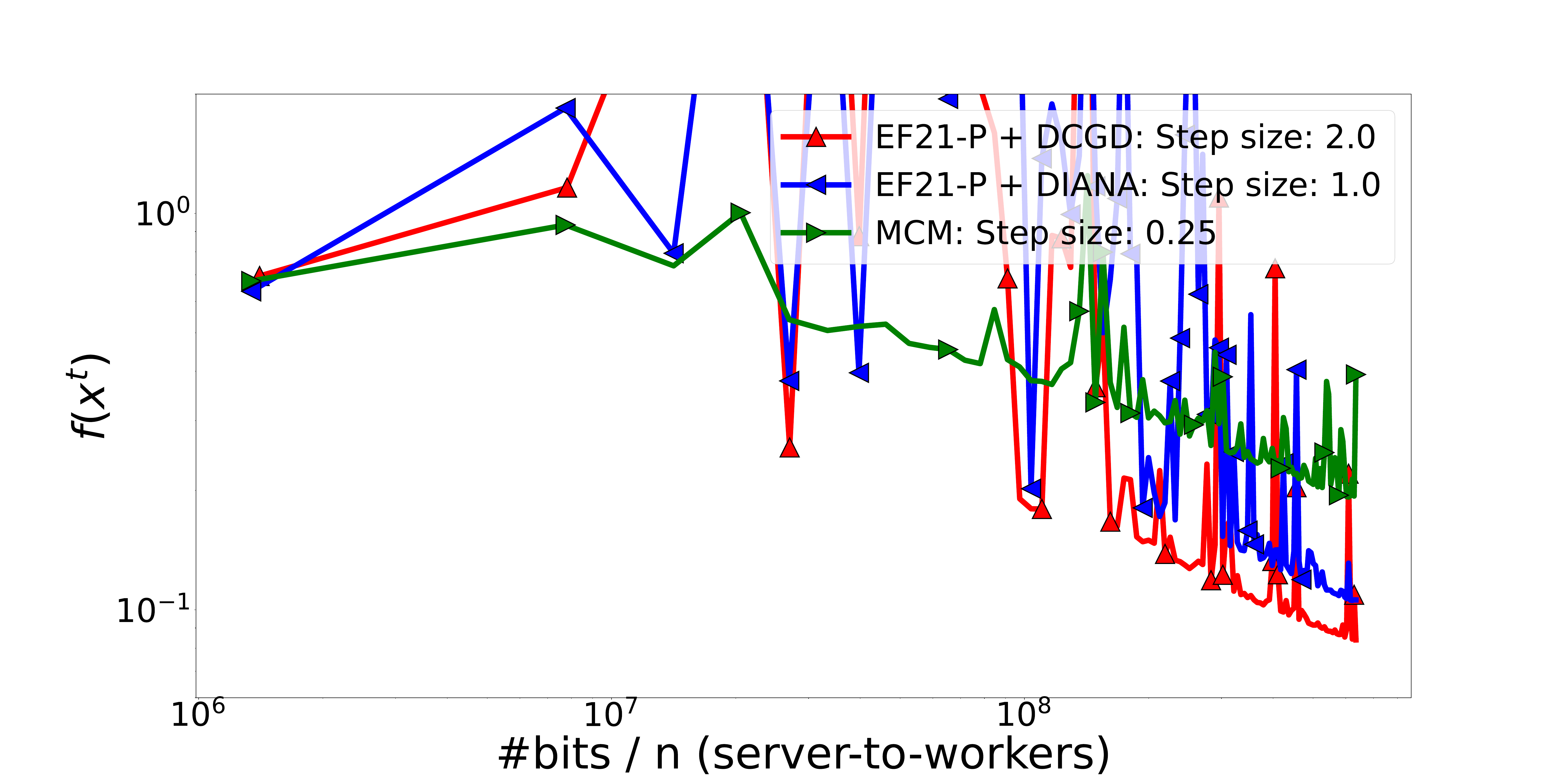}
      \end{subfigure}\hfill
    \caption{Logistic Regression with \textit{real-sim} dataset. \# of workers $n = 100.$ The parameters of workers-to-server and server-to-workers compressors are $K_{\wtos} = 100$ and $K_{\stow} = 2000.$}
    \label{fig:mcm_real_sim}
\end{figure}

Finally, we provide experiments for the nonconvex setting and compare \algname{EF21-P + DCGD} against \algname{EF21-BC} \citep{fatkhullin2021ef21} and \algname{DASHA} \citep{tyurin2022dasha}. We consider the logistic regression with a nonconvex regularizer $$r(x_1, \dots, x_c) \eqdef \lambda \sum_{y = 1}^c \sum_{k = 1}^d \frac{[x_{y}]_k^2}{1 + [x_{y}]_k^2},$$
where $[\cdot]_k$ is an indexing operation of a vector and $\lambda = 0.001.$ We use Rand$K$ and Top$K$ compressors for the workers-to-server and server-to-workers compressions, respectively. Note that in these experiments, the server-to-workers compression is only supported by \algname{EF21-P + DCGD} and \algname{EF21-BC}. In Figure~\ref{fig:nonconve_exper}, one can see that \algname{EF21-P + DCGD} converges faster than other algorithms and outperforms \algname{DASHA}, which does not compress vectors when transmitting them from the server to the workers.

\begin{figure}[h]
    \centering
    \begin{subfigure}{.49\textwidth}
      \centering
      \includegraphics[width=\textwidth]{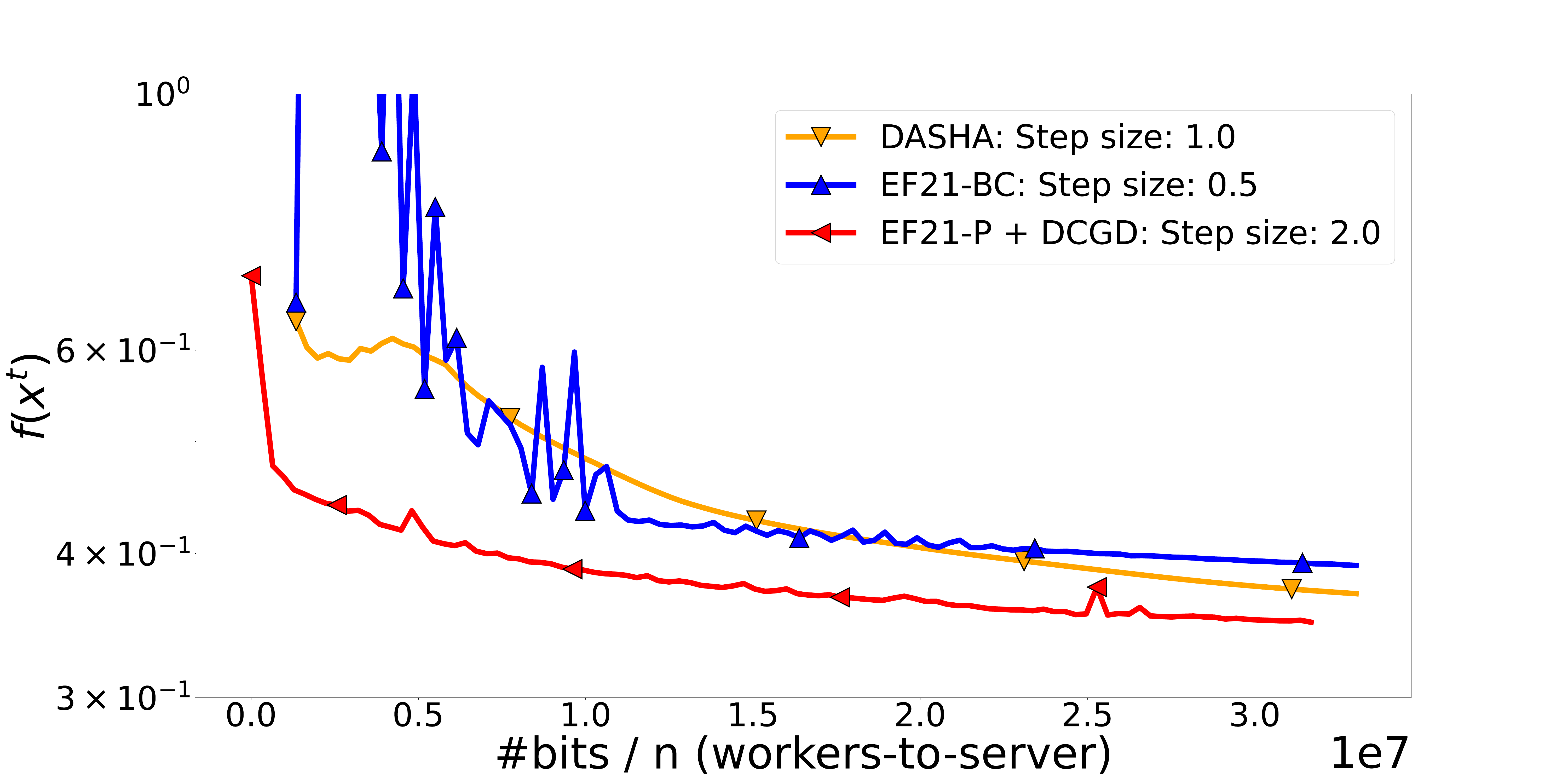}
    \end{subfigure}
    \begin{subfigure}{.49\textwidth}
        \centering
        \includegraphics[width=\textwidth]{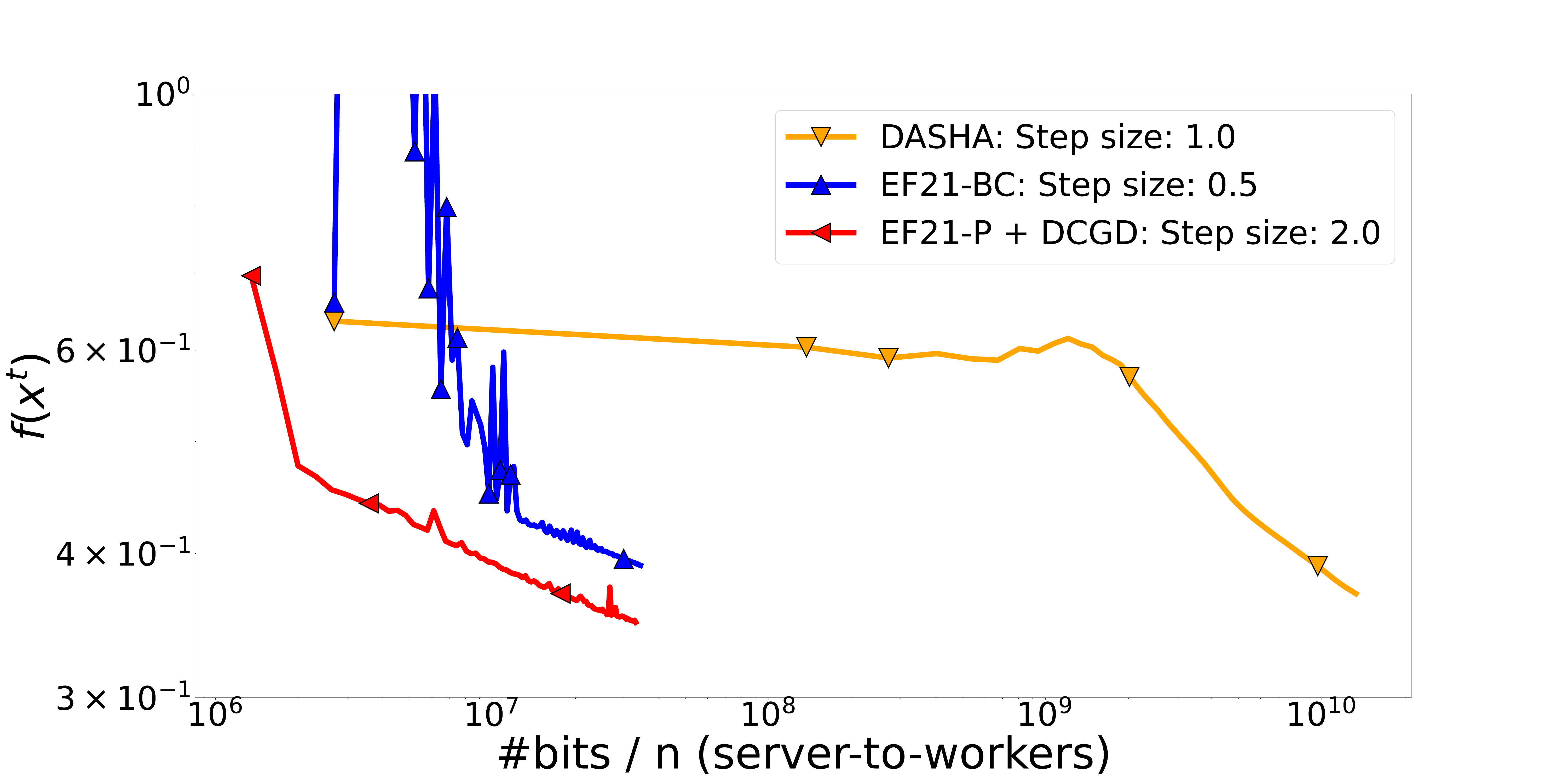}
      \end{subfigure}\hfill
    \caption{Logistic Regression with the nonconvex regularizer and \textit{real-sim} dataset. \# of workers $n = 100.$ $K = 100$ in all compressors.}
    \label{fig:nonconve_exper}
\end{figure}

\clearpage
\section{Useful Identities and Inequalities}

For all $x,y,x_1,\ldots,x_n \in \R^d$, $s>0$ and $\alpha\in(0,1]$, we have:
\begin{align}
    \label{eq:young}
    \norm{x+y}^2 &\leq (1+s) \norm{x}^2 + (1+s^{-1}) \norm{y}^2, \\
    \label{eq:young_2}
    \norm{x+y}^2 &\leq 2 \norm{x}^2 + 2 \norm{y}^2, \\
    \label{eq:fenchel}
    \inp{x}{y} &\leq \frac{\norm{x}^2}{2s} + \frac{s\norm{y}^2}{2}, \\
    \label{eq:ineq1}
    \left( 1 - \alpha \right)\left( 1 + \frac{\alpha}{2} \right) &\leq 1 - \frac{\alpha}{2}, \\
    \label{eq:ineq2}
    \left( 1 - \alpha \right)\left( 1 + \frac{2}{\alpha} \right) &\leq \frac{2}{\alpha}, \\
    \label{eq:inp}
    \inp{a}{b} &= \frac{1}{2} \left( \norm{a}^2 + \norm{b}^2 - \norm{a-b}^2 \right).
\end{align}
\textbf{Tower property:} For any random variables $X$ and $Y$, we have
\begin{align}
\label{eq:tower}
    \Exp{\Exp{X\,|\,Y}} = \Exp{X}.
\end{align}
\textbf{Variance decomposition:} For any random vector $X\in\R^d$ and any non-random $c\in\R^d$, we have
\begin{align}
\label{eq:vardecomp}
   \Exp{\norm{X-c}^2} = \Exp{\norm{X - \Exp{X}}^2} + \norm{\Exp{X}-c}^2.
\end{align}

\begin{lemma}[\cite{nesterov2018lectures}]
   \label{lemma:lipt_func}
   Let $f:\R^d\to \R$ be a function for which Assumptions \ref{ass:lipschitz_constant} and \ref{ass:convex} are satisfied. Then for all $x, y\in\R^d$ we have:
   \begin{align}
       \norm{\nabla f(x) - \nabla f(y)}^2 \leq 2L (f(x) - f(y) - \inp{\nabla f(y)}{x - y}).
   \end{align}
\end{lemma}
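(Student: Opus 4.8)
The plan is to use the standard auxiliary-function argument from convex analysis. Assumption~\ref{ass:convex} with $\mu \geq 0$ gives that $f$ is convex, and Assumption~\ref{ass:lipschitz_constant} gives that $f$ is $L$-smooth. I would fix $y \in \R^d$ and introduce the shifted function $\phi(z) \eqdef f(z) - \inp{\nabla f(y)}{z}$. Subtracting a linear term preserves both convexity and $L$-smoothness, and $\nabla \phi(z) = \nabla f(z) - \nabla f(y)$, so that $\nabla \phi(y) = 0$. By convexity, this stationarity makes $y$ a global minimizer of $\phi$.

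Next I would invoke the descent lemma, which is the standard consequence of $L$-smoothness: for all $u, z \in \R^d$,
$$\phi(u) \leq \phi(z) + \inp{\nabla \phi(z)}{u - z} + \frac{L}{2}\norm{u - z}^2.$$
I would apply this at $z = x$ with the specific choice $u = x - \frac{1}{L}\nabla\phi(x)$ that minimizes the quadratic upper bound on the right-hand side. This yields $\phi\!\left(x - \tfrac{1}{L}\nabla\phi(x)\right) \leq \phi(x) - \frac{1}{2L}\norm{\nabla\phi(x)}^2$.

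Combining the two steps, since $y$ minimizes $\phi$ we have $\phi(y) \leq \phi\!\left(x - \tfrac{1}{L}\nabla\phi(x)\right) \leq \phi(x) - \frac{1}{2L}\norm{\nabla\phi(x)}^2$, i.e.\ $\phi(x) - \phi(y) \geq \frac{1}{2L}\norm{\nabla\phi(x)}^2$. Finally I would unfold the definitions, using $\phi(x) - \phi(y) = f(x) - f(y) - \inp{\nabla f(y)}{x - y}$ and $\norm{\nabla\phi(x)}^2 = \norm{\nabla f(x) - \nabla f(y)}^2$, and multiply through by $2L$ to recover the claimed inequality.

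There is no genuine obstacle here, as the result is classical (\citet{nesterov2018lectures}). The only points requiring a little care are justifying that $y$ is the global minimizer of $\phi$ (convexity together with $\nabla\phi(y)=0$), and selecting the minimizing point $u = x - \tfrac{1}{L}\nabla\phi(x)$ rather than an arbitrary one, since it is precisely this choice that produces the sharp constant $\tfrac{1}{2L}$.
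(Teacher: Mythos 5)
Your proof is correct. The paper itself gives no proof of this lemma --- it is stated with a citation to \citet{nesterov2018lectures} --- and your argument (pass to the shifted function $\phi(z) = f(z) - \inp{\nabla f(y)}{z}$, note that $\phi$ inherits convexity and $L$-smoothness so that $y$ is its global minimizer, then apply the descent lemma at $u = x - \frac{1}{L}\nabla \phi(x)$ to get $\phi(x) - \phi(y) \geq \frac{1}{2L}\norm{\nabla \phi(x)}^2$) is precisely the classical proof from that reference, including the step choice that yields the sharp constant $\frac{1}{2L}$.
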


\begin{lemma}[\cite{khaled2020better}]
    \label{lemma:lipt_func_nonconvex}
    Let $f$ be a function for which Assumptions \ref{ass:lipschitz_constant} and \ref{ass:lower_bound} are satisfied. Then for all $x, y\in\R^d$ we have:
    \begin{align}
        \norm{\nabla f(x)}^2 \leq 2L (f(x) - f^*).
    \end{align}
\end{lemma}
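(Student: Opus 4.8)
The plan is to derive the bound directly from the quadratic upper bound (the ``descent lemma'') implied by $L$-smoothness, together with the lower boundedness of $f$. Note that the variable $y$ in the statement is vacuous, as it does not appear in the conclusion, so the claim is purely about an arbitrary point $x$.

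First I would establish the descent lemma: under Assumption~\ref{ass:lipschitz_constant}, for all $x, z \in \R^d$,
\[
f(z) \leq f(x) + \inp{\nabla f(x)}{z - x} + \frac{L}{2}\norm{z - x}^2.
\]
This follows by writing $f(z) - f(x) = \int_0^1 \inp{\nabla f(x + \tau(z-x))}{z-x}\,d\tau$, subtracting $\inp{\nabla f(x)}{z-x}$, and bounding the resulting integrand by $\norm{\nabla f(x+\tau(z-x)) - \nabla f(x)}\,\norm{z-x} \leq L\tau\,\norm{z-x}^2$ via Cauchy--Schwarz and Assumption~\ref{ass:lipschitz_constant}, then integrating $\int_0^1 L\tau\,d\tau = \tfrac{L}{2}$.

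Next I would specialize the descent lemma at the explicit minimizer $z = x - \frac{1}{L}\nabla f(x)$ of its right-hand side, which gives
\[
f\!\left(x - \tfrac{1}{L}\nabla f(x)\right) \leq f(x) - \frac{1}{2L}\norm{\nabla f(x)}^2.
\]
Finally, Assumption~\ref{ass:lower_bound} yields $f^* \leq f(z)$ for every $z \in \R^d$; applying this to $z = x - \frac{1}{L}\nabla f(x)$ and combining with the previous display gives $f^* \leq f(x) - \frac{1}{2L}\norm{\nabla f(x)}^2$. Rearranging produces $\norm{\nabla f(x)}^2 \leq 2L(f(x) - f^*)$, as claimed.

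There is no substantive obstacle here: the result is a textbook consequence of smoothness and lower boundedness. The only point worth flagging is the choice of evaluation point---rather than minimizing over $z$ abstractly, one plugs the explicit minimizer of the quadratic majorant into the descent lemma, after which lower boundedness closes the argument in a single line.
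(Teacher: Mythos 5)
Your proof is correct: the paper itself gives no proof of this lemma (it is imported from \cite{khaled2020better}), and your argument---applying the descent lemma at the point $x - \frac{1}{L}\nabla f(x)$ and then invoking $f^* \leq f\bigl(x - \frac{1}{L}\nabla f(x)\bigr)$---is exactly the standard derivation used in that reference. The observation that $y$ is vacuous in the statement is also accurate.
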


\clearpage
\section{Proof of Lemma~\ref{lemma:lipt_constants}}
\LEMMALIPTCONSTANTS*
\begin{proof}
    One can show (see \citep{nesterov2003introductory}) that a convex function $f$ is $L$-smooth if and only if either of the two conditions below holds:
    \begin{align*}
        &0 \leq \inp{\nabla f(x) - \nabla f(x)}{x - y} \leq L \norm{x - y}^2, \quad \forall x,y\in\R^d,\\
        &\norm{\nabla f(x) - \nabla f(x)}^2 \leq L \inp{\nabla f(x) - \nabla f(x)}{x - y}, \quad \forall x,y\in\R^d.
    \end{align*}
    For any fixed $i \in [n]$, we have
    \begin{eqnarray*}
        \inp{\nabla f_i(x) - \nabla f_i(y)}{x - y} 
        &\leq &\sum_{i=1}^n\inp{\nabla f_i(x) - \nabla f_i(y)}{x - y} \\
        &= &n \frac{1}{n}\sum_{i=1}^n\inp{\nabla f_i(x) - \nabla f_i(y)}{x - y} \\
        &=&n\inp{\nabla f(x) - \nabla f(y)}{x - y} \\
        &\leq &n\norm{\nabla f(x) - \nabla f(y)}\norm{x - y} \\
        &\overset{\eqref{ass:lipschitz_constant}}{\leq} &n L \norm{x-y}^2.
    \end{eqnarray*}
    Thus $L_i \leq n L$ and $L_{\max} \leq n L.$ Next,
    \begin{eqnarray*}
        \frac{1}{n} \sum_{i=1}^n \norm{\nabla f_i(x) - \nabla f_i(y)}^2
        &\leq& \frac{1}{n} \sum_{i=1}^n L_i \inp{\nabla f_i(x) - \nabla f_i(y)}{x - y} \\
        &\leq &L_{\max} \frac{1}{n} \sum_{i=1}^n \inp{\nabla f_i(x) - \nabla f_i(y)}{x - y} \\
        &= &L_{\max} \inp{\nabla f(x) - \nabla f(y)}{x - y} \\
        &\leq &L_{\max} \norm{\nabla f(x) - \nabla f(y)} \norm{x - y} \\
        &\overset{\eqref{ass:lipschitz_constant}}{\leq}& L_{\max} L \norm{x - y}^2,
    \end{eqnarray*}
    and hence $\widehat{L} \leq \sqrt{L_{\max} L}.$ Using Jensen's inequality, we have
    \begin{align*}
        \norm{\nabla f(x) - \nabla f(y)}^2
        \leq \frac{1}{n} \sum_{i=1}^n \norm{\nabla f_i(x) - \nabla f_i(y)}^2 \leq \widehat{L}^2 \norm{x - y}^2.
    \end{align*}
    Thus $L \leq \widehat{L}.$ Finally, $\widehat{L} \leq L_{\max}$ follows from
    \begin{align*}
        \frac{1}{n} \sum_{i=1}^n \norm{\nabla f_i(x) - \nabla f_i(y)}^2
        \leq \frac{1}{n} \sum_{i=1}^n L_i^2 \norm{x - y}^2 
       \leq L_{\max}^2 \norm{x - y}^2.
    \end{align*}
\end{proof}

\clearpage
\section{Convergence of \algname{EF21-P} in the Strongly Convex Regime} \label{sec:EF21-P_convex}

 We now provide the convergence rate of \algnamebig{EF21-P} 
 in the strongly convex case.
 \begin{theorem}
    \label{theorem:ef21_strong}
    Let Assumptions~\ref{ass:lipschitz_constant} and \ref{ass:convex} hold, set $w^0 = x^0$ and choose $\gamma \leq \frac{\alpha}{16 L}.$ Then \algnamebig{EF21-P} returns $x^{T}$ such that
    \begin{align*}
        &\frac{1}{2\gamma}\Exp{\norm{x^{T} - x^*}^2} + \Exp{f(x^{T}) - f(x^*)} \leq \left(1 - \frac{\gamma \mu}{2}\right)^T\left(\frac{1}{2\gamma}\Exp{\norm{x^0 - x^*}^2} + \left(f(x^{0}) - f(x^{*})\right)\right).
     \end{align*}
     Moreover, $\Exp{\norm{w^t-x^*}^2} \to 0$ as $t\to \infty$.
 \end{theorem}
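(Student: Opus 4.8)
The plan is to set up a Lyapunov function that combines the squared distance to the optimum with the function suboptimality, and then show it contracts by a factor of $\left(1 - \frac{\gamma \mu}{2}\right)$ per iteration. The key difficulty compared to standard gradient descent is that \algname{EF21-P} evaluates the gradient at the shifted point $w^t$ rather than at $x^t$, so the descent argument must control the mismatch $\norm{x^t - w^t}^2$. The natural candidate is a Lyapunov function of the form
\begin{equation*}
\Psi^t \eqdef \frac{1}{2\gamma}\norm{x^t - x^*}^2 + \left(f(x^t) - f(x^*)\right) + c \cdot \norm{x^t - w^t}^2,
\end{equation*}
for an appropriately chosen constant $c>0$, where the last term tracks the error-feedback discrepancy that \eqref{eq:98yd98fd} guarantees will shrink.

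First I would expand $\Exp{\norm{x^{t+1}-x^*}^2}$ using the update $x^{t+1} = x^t - \gamma \nabla f(w^t)$, producing a cross term $-2\gamma\inp{\nabla f(w^t)}{x^t - x^*}$ and a $\gamma^2\norm{\nabla f(w^t)}^2$ term. I would rewrite the inner product by adding and subtracting $w^t$: $\inp{\nabla f(w^t)}{x^t - x^*} = \inp{\nabla f(w^t)}{w^t - x^*} + \inp{\nabla f(w^t)}{x^t - w^t}$. The first piece is handled by $\mu$-strong convexity (giving a $-\mu\norm{w^t-x^*}^2$-type contribution and the suboptimality gap), while the second piece is the dangerous one and must be absorbed using Young's inequality \eqref{eq:young} or \eqref{eq:fenchel} to trade it against $\norm{x^t-w^t}^2$ and $\norm{\nabla f(w^t)}^2$. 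For the latter I would use $L$-smoothness together with Lemma~\ref{lemma:lipt_func} to bound $\norm{\nabla f(w^t)}^2$ in terms of the suboptimality $f(w^t) - f(x^*)$, which is the standard device that makes the $\gamma^2$ term affordable once $\gamma \leq \frac{\alpha}{16L}$ is small enough.

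Next I would handle the evolution of the error term. Applying the conditional bound \eqref{eq:98yd98fd} to $w^{t+1} = w^t + \cC^t(x^{t+1}-w^t)$, one gets $\Exp{\norm{x^{t+1}-w^{t+1}}^2 \mid x^{t+1}, w^t} \leq (1-\alpha)\norm{x^{t+1}-w^t}^2$, and then I would relate $\norm{x^{t+1}-w^t}^2$ back to $\norm{x^t-w^t}^2$ plus a $\gamma^2\norm{\nabla f(w^t)}^2$ term (again expanding via $x^{t+1}-w^t = (x^t-w^t) - \gamma\nabla f(w^t)$ and using \eqref{eq:young} with parameter tuned to $\alpha$, exploiting inequalities \eqref{eq:ineq1}–\eqref{eq:ineq2}). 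This produces a recursion in $\norm{x^t-w^t}^2$ that decays at rate $1-\frac{\alpha}{2}$ while being fed by the gradient-norm term, which is precisely why a weighted combination works. The main obstacle, and the step requiring the most care, is the bookkeeping of constants: the coefficient $c$ on the error term, the contraction rate $1-\frac{\gamma\mu}{2}$, and the smallness condition $\gamma \leq \frac{\alpha}{16L}$ must be chosen simultaneously so that all leftover $\norm{x^t-w^t}^2$, $\norm{\nabla f(w^t)}^2$, and $f(w^t)-f(x^*)$ terms cancel or are dominated; the interplay between the strong-convexity rate $\gamma\mu$ and the compression rate $\alpha$ is where the analysis is delicate.

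Finally, telescoping (or rather iterating) the one-step contraction $\Exp{\Psi^{t+1}} \leq \left(1-\frac{\gamma\mu}{2}\right)\Exp{\Psi^t}$ yields the stated bound after dropping the nonnegative error term from $\Psi^T$ on the left and noting $x^0 = w^0$ makes the error term vanish in $\Psi^0$. The additional claim $\Exp{\norm{w^t - x^*}^2}\to 0$ then follows from the triangle inequality $\norm{w^t-x^*}^2 \leq 2\norm{w^t-x^t}^2 + 2\norm{x^t-x^*}^2$: the first term goes to zero because $\Exp{\norm{x^t-w^t}^2}$ is controlled by the Lyapunov decay, and the second because the main bound forces $\Exp{\norm{x^t-x^*}^2}\to 0$ at the linear rate.
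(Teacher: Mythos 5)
Your proposal is correct and follows essentially the same route as the paper's proof: the same Lyapunov function $\frac{1}{2\gamma}\norm{x^t-x^*}^2 + (f(x^t)-f(x^*)) + c\norm{x^t-w^t}^2$ (the paper takes $c = \frac{16L}{\alpha}$), the same error recursion obtained from the contractive property \eqref{eq:contractive_compressor} and Young's inequality tuned to $\alpha$, the same use of Lemma~\ref{lemma:lipt_func} to convert gradient norms into suboptimality gaps, and the same telescoping plus $\norm{w^t-x^*}^2 \leq 2\norm{w^t-x^t}^2 + 2\norm{x^t-x^*}^2$ for the final claim. The only (cosmetic) difference is that you expand the squared distance and split the cross term at $w^t$, whereas the paper starts from the descent lemma at $w^t$ combined with the three-point identity \eqref{eq:equalconv1}; these are equivalent reorganizations of the same computation.
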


 Theorem~\ref{theorem:ef21_strong} states that \algnamebig{EF21-P} will return an $\varepsilon$-solution after $\cO\left(\frac{L}{\alpha\mu}\log\nicefrac{1}{\varepsilon}\right)$ steps. Comparing to \algname{GD}'s rate $\cO\left(\frac{L}{\mu}\log\nicefrac{1}{\varepsilon}\right),$ one can see that \algnamebig{EF21-P} converges $\nicefrac{1}{\alpha}$ times slower.

 \begin{proof}
    First, let us note that
    \begin{align}
    \label{eq:equalconv1}
       \norm{x^t - x^*}^2 - &\norm{x^{t+1} - x^*}^2 - \norm{x^{t+1} - x^{t}}^2 \nonumber \\
       &=\inp{x^t - x^{t+1}}{x^t - 2x^* + x^{t+1}} - \inp{x^{t+1} - x^{t}}{x^{t+1} - x^{t}} \nonumber \\
       &=2\inp{x^t - x^{t+1}}{x^{t+1} - x^*} \nonumber \\
       &=2\gamma\inp{\nabla f(w^t)}{x^{t+1} - x^*}.
    \end{align}
    Using $L$-smoothness of $f$ (Assumption~\ref{ass:lipschitz_constant}), we obtain
    \begin{eqnarray*}
        f(x^{t+1}) &\leq & f(w^t) + \inp{\nabla f(w^t)}{x^{t+1} - w^t} + \frac{L}{2}\norm{x^{t+1} - w^t}^2 \nonumber \\
        &\overset{\textnormal{convexity}}{\leq}& f(x^*) + \inp{\nabla f(w^t)}{x^{t+1} - x^*} - \frac{\mu}{2} \norm{w^t - x^*}^2 + \frac{L}{2}\norm{x^{t+1} - w^t}^2\\
        &\overset{\eqref{eq:equalconv1}}{\leq} & f(x^*) 
        + \frac{1}{2\gamma}\norm{x^t - x^*}^2 - \frac{1}{2\gamma}\norm{x^{t+1} - x^*}^2 - \frac{1}{2\gamma}\norm{x^{t+1} - x^{t}}^2 \\
        &&\qquad- \frac{\mu}{2} \norm{w^t - x^*}^2 + \frac{L}{2}\norm{x^{t+1} - w^t}^2.
    \end{eqnarray*}
    Using \eqref{eq:young_2}, we have 
    \begin{align*}
        \frac{L}{2}\norm{x^{t+1} - w^t}^2 \leq L\norm{x^{t+1} - x^t}^2 + L\norm{w^t - x^t}^2
    \end{align*}
    and
    \begin{align*}
        \frac{\mu}{4} \norm{x^t - x^*}^2 \leq \frac{\mu}{2} \norm{w^t - x^*}^2 + \frac{\mu}{2} \norm{w^t - x^t}^2 \leq \frac{\mu}{2} \norm{w^t - x^*}^2 + L \norm{w^t - x^t}^2,
    \end{align*}
    where we used the fact that $\mu \leq L$.
    Hence
    \begin{align*}
        f(x^{t+1})
        &\leq f(x^*) 
        + \frac{1}{2\gamma}\norm{x^t - x^*}^2 - \frac{1}{2\gamma}\norm{x^{t+1} - x^*}^2 - \frac{1}{2\gamma}\norm{x^{t+1} - x^{t}}^2 \\
        &\qquad- \frac{\mu}{2} \norm{w^t - x^*}^2 + \frac{L}{2}\norm{x^{t+1} - w^t}^2 \\
        &\leq f(x^*) 
        + \frac{1}{2\gamma}\norm{x^t - x^*}^2 - \frac{1}{2\gamma}\norm{x^{t+1} - x^*}^2 - \frac{1}{2\gamma}\norm{x^{t+1} - x^{t}}^2 \\
        &\qquad+ L \norm{w^t - x^t}^2 - \frac{\mu}{4} \norm{x^t - x^*}^2
        + L\norm{x^{t+1} - x^t}^2 + L\norm{w^t - x^t}^2 \\
        &= f(x^*) 
        + \frac{1}{2\gamma}\left(1 - \frac{\gamma \mu}{2}\right) \norm{x^t - x^*}^2
        - \frac{1}{2\gamma} \norm{x^{t+1} - x^*}^2 \\
        &\qquad- \left(\frac{1}{2\gamma} - L\right) \norm{x^{t+1} - x^{t}}^2
        + 2L \norm{w^t - x^t}^2 \\
        &\leq f(x^*) 
        + \frac{1}{2\gamma}\left(1 - \frac{\gamma \mu}{2}\right) \norm{x^t - x^*}^2
        - \frac{1}{2\gamma} \norm{x^{t+1} - x^*}^2
        + 2L \norm{w^t - x^t}^2,
    \end{align*}
    where the last inequality follows from the fact that $\gamma \leq \frac{1}{2L}$.
    Let us denote by $\ExpSub{t+1}{\cdot}$ the expectation conditioned on previous iterations $\{0, \dots, t\}.$ Then
    \begin{align}
    \label{eq:lsmboundef21p}
        \ExpSub{t+1}{f(x^{t+1})} &\leq f(x^*)
        + \frac{1}{2\gamma}\left(1 - \frac{\gamma \mu}{2}\right)\norm{x^t - x^*}^2 \nonumber\\
        &\qquad - \frac{1}{2\gamma}\ExpSub{t+1}{\norm{x^{t+1} - x^*}^2} + 2 L \norm{w^t - x^t}^2.
    \end{align}
    It remains to bound $\ExpSub{t+1}{\norm{w^{t+1} - x^{t+1}}^2}$:
    \begin{eqnarray*}
        \ExpSub{t+1}{\norm{w^{t+1}-x^{t+1}}^2} &=& \ExpSub{t+1}{\norm{w^t + \mathcal{C}^p(x^{t+1} - w^t) - x^{t+1}}^2} \\
        & \overset{\eqref{eq:biased_compressor}}{\leq} & (1-\alpha) \ExpSub{t+1}{\norm{x^{t+1} - w^t}^2} \\
        & = & (1-\alpha) \norm{x^{t} - \gamma \nabla f(w^t) - w^t}^2 \\
        & \overset{\eqref{eq:young}}{\leq} & \left(1-\frac{\alpha}{2}\right) \norm{w^t - x^t}^2
        + \frac{2 \gamma^2}{\alpha} \norm{\nabla f(w^{t})}^2 \\
        & \overset{\eqref{eq:young_2}}{\leq} & \left(1-\frac{\alpha}{2}\right) \norm{w^t - x^t}^2
        + \frac{4 \gamma^2}{\alpha} \norm{\nabla f(w^{t}) - \nabla f(x^{t})}^2 \\
        && \qquad + \frac{4 \gamma^2}{\alpha} \norm{\nabla f(x^{t}) - \nabla f(x^{*})}^2 \\
        &\overset{\eqref{ass:lipschitz_constant}, \eqref{lemma:lipt_func}}   {\leq} & \left(1-\frac{\alpha}{2} + \frac{4 \gamma^2 L^2}{\alpha}\right) \norm{w^t - x^t}^2 + \frac{8 \gamma^2 L}{\alpha} \left(f(x^{t}) - f(x^{*})\right) \\
        &\leq & \left(1-\frac{\alpha}{4}\right) \norm{w^t - x^t}^2
        + \frac{8 \gamma^2 L}{\alpha} \left(f(x^{t}) - f(x^{*})\right),
    \end{eqnarray*}
    where in the last step we assume that $\gamma \leq \frac{\alpha}{4 L}$. Adding a $\frac{16L}{\alpha}$ multiple of the above inequality to \eqref{eq:lsmboundef21p}, we obtain
    \begin{align*}
        &\ExpSub{t+1}{f(x^{t+1})}
        + \frac{16L}{\alpha}\ExpSub{t+1}{\norm{w^{t+1}-x^{t+1}}^2}
        \leq f(x^*)
        + \frac{1}{2\gamma}\left(1 - \frac{\gamma \mu}{2}\right)\norm{x^t - x^*}^2 \\
        &\qquad - \frac{1}{2\gamma}\ExpSub{t+1}{\norm{x^{t+1} - x^*}^2}
        + \frac{16L}{\alpha}\left(1-\frac{\alpha}{8}\right) \norm{w^t - x^t}^2
        + \frac{128 \gamma^2 L^2}{\alpha^2} \left(f(x^{t}) - f(x^{*})\right).
    \end{align*}
    Thus, taking full expectation over both sides of the inequality and considering $\gamma \leq \frac{\alpha}{16L} \leq \frac{\alpha}{4\mu}$ gives
    \begin{align*}
        &\Exp{f(x^{t+1}) - f(x^*)}
        + \frac{1}{2\gamma}\Exp{\norm{x^{t+1} - x^*}^2}
        + \frac{16L}{\alpha}\Exp{\norm{w^{t+1}-x^{t+1}}^2} \\
        &\qquad \leq \left(1 - \frac{\gamma \mu}{2}\right) \left(\Exp{f(x^{t}) - f(x^{*})}
        + \frac{1}{2\gamma} \Exp{\norm{x^t - x^*}^2}
        + \frac{16L}{\alpha} \Exp{\norm{w^t - x^t}^2}\right).
    \end{align*}
    Applying this inequality iteratively and using the assumption $w^0=x^0$ proves the result.
\end{proof}

\clearpage
\section{Convergence of \algnamebig{EF21-P} in the Smooth Nonconvex Regime}
 \label{sec:abc}
 
 \subsection{General convergence theory}
 
 We now move on to study how the \algname{EF21-P} method can be used in the nonconvex regime.
 The analysis relies on the expected smoothness assumption introduced by \cite{khaled2020better}. In their work, they study \algname{SGD} methods, performing iterations of the form
 \begin{align*}
     x^{t+1} &= x^t - \gamma g^t,
 \end{align*}
 where $g^t$ is an unbiased estimator of the true gradient $\nabla f(x^t)$.
 Following \citet{khaled2020better}, we shall assume that $\Exp{g(x)}=\nabla f(x)$. However, in our case, gradients will be evaluated at perturbed points, thus resulting in biased stochastic gradient estimators.
 In particular, we consider the following general update rule, where the stochastic gradients are calculated at points evolving according to the \algname{EF21-P} mechanism, rather than at the current iterate:
 \begin{align}
 \label{eq:nonconvexupdate}
     x^{t+1} &= x^t - \gamma g(w^t), \nonumber \\
     w^{t+1} &= w^t + \cC^{P}(x^{t+1} - w^t).
 \end{align}
 Our result covers a wide range of sources of stochasticity that may be present in $g$. For a detailed discussion of the topic, we refer the reader to the original paper \citep{khaled2020better}.
 
 Throughout this section, we will rely on the following assumptions:
 \begin{assumption}
     \label{ass:unbiased}
     The stochastic gradient $g(x)$ is an unbiased estimator of the true gradient $\nabla f(x)$, i.e., $$\Exp{g(x)} = \nabla f(x)$$ for all $x\in\R^d$.
 \end{assumption}
 \begin{assumption}[From \cite{khaled2020better}]
     \label{ass:ABC}
     There exist constants $A,B,C \geq 0$ such that:
     \begin{align*}
         \Exp{\norm{g(x)}^2} \leq 2A(f(x)-f^*) + B \norm{\nabla f(x)}^2 + C
     \end{align*}
     for all $x\in\R^d$.
 \end{assumption}

 We are ready to state the main theorem:

 \begin{restatable}{theorem}{THEOREMABC}
    \label{theorem:abcrate}
    Let Assumptions \ref{ass:lipschitz_constant}, \ref{ass:lower_bound}, \ref{ass:unbiased} and \ref{ass:ABC} hold and set $w^0 = x^0.$
    Fix $\varepsilon>0$ and choose the stepsize
    \begin{align*}
        \gamma &= \min\left\{
            \frac{\alpha}{8 L},
            \frac{1}{4 BL},
            \frac{1}{\sqrt{2ALT}},
            \frac{\varepsilon}{16CL}
            \right\}.
    \end{align*}
    Then
    \begin{align}
        T \geq 
        \frac{48\Delta_{0} L}{\varepsilon}
    \max\left\{
    \frac{8}{\alpha},4B,
    \frac{96\Delta_{0}A}{\varepsilon},
    \frac{16C}{\varepsilon}
    \right\} \quad \Rightarrow \quad 
        \min \limits_{0\leq t\leq T-1} \Exp{\norm{\nabla f(x^{t})}^2}
        \leq \varepsilon. \label{eq:norm_eps}
    \end{align}
\end{restatable}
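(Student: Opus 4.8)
The plan is to adapt the Lyapunov descent argument of the ABC framework of \citet{khaled2020better} so that it simultaneously dissipates the primal error-feedback discrepancy $\norm{w^t-x^t}^2$. Concretely, I would work with $\Phi^t \eqdef (f(x^t)-f^*) + S\norm{w^t-x^t}^2$, where $S \eqdef \nicefrac{2\gamma L^2}{\alpha}$, and aim to prove a one-step inequality of the shape $\ExpSub{t}{\Phi^{t+1}} \le (1+r)\Phi^t - \tfrac{\gamma}{4}\norm{\nabla f(x^t)}^2 + d$, with growth factor $r = \Theta(L\gamma^2 A)$ and additive term $d = \Theta(L\gamma^2 C)$ (here $\ExpSub{t}{\cdot}$ conditions on the first $t$ iterations and $\Delta_0 = f(x^0)-f^*$). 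Given this, \eqref{eq:norm_eps} follows by a Khaled--Richt\'arik-style telescoping.

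The first ingredient is a descent estimate. From \eqref{eq:nonconvexupdate} we have $x^{t+1}-x^t = -\gamma g(w^t)$ with $\ExpSub{t}{g(w^t)} = \nabla f(w^t)$ by Assumption~\ref{ass:unbiased}, so $L$-smoothness (Assumption~\ref{ass:lipschitz_constant}) yields $\ExpSub{t}{f(x^{t+1})} \le f(x^t) - \gamma\inp{\nabla f(x^t)}{\nabla f(w^t)} + \tfrac{L\gamma^2}{2}\ExpSub{t}{\norm{g(w^t)}^2}$. The distinctive feature is that the gradient is evaluated at the perturbed point $w^t$; I would resolve this mismatch by splitting the inner product via \eqref{eq:inp} and bounding $\norm{\nabla f(x^t)-\nabla f(w^t)}^2 \le L^2\norm{x^t-w^t}^2$, producing a $-\tfrac{\gamma}{2}\norm{\nabla f(x^t)}^2 - \tfrac{\gamma}{2}\norm{\nabla f(w^t)}^2 + \tfrac{\gamma L^2}{2}\norm{x^t-w^t}^2$ contribution. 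The second ingredient is the error recursion: \eqref{eq:contractive_compressor} applied to $w^{t+1}-x^{t+1} = w^t + \cC^P(x^{t+1}-w^t)-x^{t+1}$ gives $\ExpSub{t}{\norm{w^{t+1}-x^{t+1}}^2} \le (1-\alpha)\ExpSub{t}{\norm{x^{t+1}-w^t}^2}$. The decisive move is to apply the variance decomposition \eqref{eq:vardecomp} to $x^{t+1}-w^t = (x^t-w^t) - \gamma g(w^t)$ \emph{before} any further estimation: the mean part $\norm{(x^t-w^t)-\gamma\nabla f(w^t)}^2$ is handled by Young's inequality \eqref{eq:young} with parameter $\nicefrac{\alpha}{2}$ (using \eqref{eq:ineq1}--\eqref{eq:ineq2}), while the variance part equals $\gamma^2\ExpSub{t}{\norm{g(w^t)-\nabla f(w^t)}^2} \le \gamma^2\ExpSub{t}{\norm{g(w^t)}^2}$. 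This yields $\ExpSub{t}{\norm{w^{t+1}-x^{t+1}}^2} \le (1-\tfrac{\alpha}{2})\norm{w^t-x^t}^2 + \tfrac{2\gamma^2}{\alpha}\norm{\nabla f(w^t)}^2 + \gamma^2\ExpSub{t}{\norm{g(w^t)}^2}$, so that the $\nicefrac{1}{\alpha}$ blow-up multiplies only the deterministic mean gradient, whereas the stochasticity (and, after invoking Assumption~\ref{ass:ABC}, the constants $A,B,C$) enters with a clean $\gamma^2$ factor.

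I would then form $\Phi^t$ as the descent bound plus $S$ times the error recursion. The choice $S = \nicefrac{2\gamma L^2}{\alpha}$ makes the contraction $-\tfrac{\alpha}{2}S\norm{w^t-x^t}^2$ swallow the descent's $+\tfrac{\gamma L^2}{2}\norm{x^t-w^t}^2$ with margin $\tfrac{\gamma L^2}{2}$ to spare. Substituting Assumption~\ref{ass:ABC} into the two second-moment terms $\tfrac{L\gamma^2}{2}\ExpSub{t}{\norm{g(w^t)}^2}$ and $S\gamma^2\ExpSub{t}{\norm{g(w^t)}^2}$, I would convert back through $\norm{\nabla f(w^t)}^2 \le 2\norm{\nabla f(x^t)}^2 + 2L^2\norm{w^t-x^t}^2$ (from \eqref{eq:young_2}) and $f(w^t)-f^* \le (f(x^t)-f^*) + \tfrac{1}{2L}\norm{\nabla f(x^t)}^2 + L\norm{w^t-x^t}^2$ (from \eqref{eq:young} and $L$-smoothness). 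The accounting then splits cleanly: the $\norm{\nabla f(w^t)}^2$ terms (coefficient $\tfrac{\gamma}{8}$ from the descent $B$-term via $\gamma\le\tfrac{1}{4BL}$, $\tfrac{4\gamma^3L^2}{\alpha^2}$ from the mean gradient via $\gamma\le\tfrac{\alpha}{8L}$, and $\tfrac{2\gamma^3L^2B}{\alpha}$ from the variance $B$-term via both bounds jointly) sum to at most $\tfrac{\gamma}{4}$ and are absorbed by $-\tfrac{\gamma}{2}\norm{\nabla f(w^t)}^2$; the surplus $\norm{w^t-x^t}^2$ and the $\tfrac{1}{2L}\norm{\nabla f(x^t)}^2$ fragments generated by the $A$-terms are absorbed into the contraction margin and the $-\tfrac{\gamma}{2}\norm{\nabla f(x^t)}^2$ budget (both costing only a mild $T = \Omega(\nicefrac{A}{L})$); the $\Delta_t$-proportional $A$-terms become the growth factor $r$; and the $C$-terms become $d$.

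Finally I would telescope. Rearranging gives $\tfrac{\gamma}{4}\Exp{\norm{\nabla f(x^t)}^2} \le (1+r)\Exp{\Phi^t} - \Exp{\Phi^{t+1}} + d$; summing over $t=0,\dots,T-1$, using $\Phi^0 = \Delta_0$ (since $w^0=x^0$) and $\Phi^T\ge 0$, and controlling $\sum_t\Exp{\Phi^t}$ via the crude bound $\Exp{\Phi^t}\le e^{rT}(\Delta_0 + td)$ (valid once $rT = O(1)$, which $\gamma\le\nicefrac{1}{\sqrt{2ALT}}$ guarantees), produces $\tfrac{1}{T}\sum_t\Exp{\norm{\nabla f(x^t)}^2} \le \tfrac{c_1\Delta_0}{\gamma T} + c_2 L\gamma C$ for absolute constants $c_1,c_2$. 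Demanding that each summand be at most $\nicefrac{\varepsilon}{2}$ and unfolding $\nicefrac{1}{\gamma} = \max\{\nicefrac{8L}{\alpha},\,4BL,\,\sqrt{2ALT},\,\nicefrac{16CL}{\varepsilon}\}$ reproduces exactly the four-way maximum in the hypothesis on $T$ (the $\sqrt{2ALT}$ branch becoming quadratic in $T$ and yielding the $\nicefrac{\Delta_0^2A}{\varepsilon^2}$ term). I expect the main obstacle to be precisely the decoupling in the error recursion: a naive Young bound on $\norm{x^{t+1}-w^t}^2$ forces $\nicefrac{1}{\alpha}$ to multiply the \emph{entire} second moment, after which the resulting $\tfrac{\gamma^3L^2B}{\alpha^2}\norm{\nabla f(w^t)}^2$ cannot be absorbed under the stated stepsizes when $\alpha$ is small and $B$ large. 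Isolating the stochastic part with \eqref{eq:vardecomp} is what prevents $\alpha$ and $B$ from coupling, and is the crux that makes the clean four-term stepsize (hence the additive decoupling advertised in the paper) go through; the remaining difficulty is the bookkeeping needed to check that every cross term fits under the contraction margin and the gradient budget.
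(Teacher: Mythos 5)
Your proposal follows essentially the same route as the paper's proof: the same smoothness/inner-product descent estimate at the perturbed point, the same key step of applying the variance decomposition \eqref{eq:vardecomp} to $x^{t+1}-w^t$ \emph{before} Young's inequality so that only the mean part picks up the $\nicefrac{1}{\alpha}$ factor, the same Lyapunov function $f(x^t)-f^*+\Theta\left(\nicefrac{\gamma L^2}{\alpha}\right)\norm{w^t-x^t}^2$ with the same absorption accounting, and the same final unfolding of the stepsize constraints (including the observation that the $\gamma \le \nicefrac{1}{\Theta(A)}$ requirement is implied by the hypothesis on $T$ once $\varepsilon < 2L\Delta_0$, which is exactly how the paper drops the $4A$ term). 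The only cosmetic deviations are your constants and your telescoping device — direct summation with the crude $e^{rT}$ growth bound instead of the paper's weighted recursion (Lemma~\ref{abcrecursion}) — which produce the same bound up to absolute constants.
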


Note that by taking $A=C=0$ and $B=1$, one gets the $\cO(\frac{L}{\alpha\varepsilon })$ rate for \algname{EF21-P} in the nonconvex setting. Namely, under Assumptions~\ref{ass:lipschitz_constant} and \ref{ass:lower_bound}, for $x^0 = w^0$ and $0<\gamma\leq \frac{\alpha}{8 L}$, we have $\min_{0\leq t\leq T-1} \Exp{\norm{\nabla f(x^{t})}^2} \leq \varepsilon$ as soon as $T \geq \frac{384\Delta_{0} L}{\alpha\varepsilon}$.
 
We now apply the above result to the combination of \algname{EF21-P} perturbation of the model and \algname{DCGD} \citep{khaled2020better} (\algname{EF21-P + DCGD}). Suppose that the iterates follow the update \eqref{eq:nonconvexupdate} (see also Algorithm~\ref{algorithm:dcgd_ef21_p}), where
 \begin{align}
 \label{eq:abcg2}
     g(x) &= \frac{1}{n} \sum_{i=1}^n \cC_i \left(g_i(x) \right)
 \end{align}
 and each stochastic gradient $g_i(x)$ is an unbiased estimator of the true gradient $\nabla f_i(x)$ (i.e., $\Exp{g_i(x)} = \nabla f_i(x)$).

 \begin{proposition}
 \label{prop:abccases}
 Suppose that the gradient estimator $g(x)$ is constructed via \eqref{eq:abcg2} and that Assumption \ref{ass:workers_lipschitz_constant} holds. Let $\Delta^* \eqdef \frac{1}{n} \sum_{i=1}^n (f^* - f^*_i)$. Then:
 \begin{enumerate}
     \item \label{example_full_grad}
     For $g_i(x) = \nabla f_i(x)$, Assumption \ref{ass:ABC} is satisfied with
     $A = \frac{1}{n} \omega L_{max}$, $B = 1$ and $C = 2A\Delta^*.$
     \item In the same setting as in part \ref{example_full_grad}, assuming additionally that Assumption \ref{ass:strong_growth} holds,  Assumption \ref{ass:ABC} is satisfied with $A=C=0$ and $B=\frac{D \omega}{n} + 1$.
     \item \label{example_bdd_var}
     Assume that each stochastic gradient $g_i$ has bounded variance, (i.e., $\Exp{\norm{g_i(x) - \nabla f_i(x)}^2} \leq \sigma^2$). Then Assumption \ref{ass:ABC} is satisfied with $A = \frac{1}{n}\omega L_{max}$, $B = 1$ and $C = 2A\Delta^* + \frac{\omega+1}{n} \sigma^2$.
     \item Suppose that $\Exp{\norm{g_i(x) - \nabla f_i(x)}^2} \leq \sigma^2$ and $f_i=f$ for all $i\in[n]$. Then Assumption \ref{ass:ABC} is satisfied with $A = 0$, $B = \frac{\omega}{n}+1$ and $C = \frac{\omega+1}{n}\sigma^2$.
 \end{enumerate}
 \end{proposition}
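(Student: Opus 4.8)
The plan is to treat all four parts as specializations of a single computation of $\Exp{\norm{g(x)}^2}$, in which the two independent sources of randomness---the stochastic gradient sampling producing $g_i(x)$ and the unbiased compression $\cC_i$---are disentangled by repeated use of the variance decomposition \eqref{eq:vardecomp}, the tower property \eqref{eq:tower}, and the compressor bound \eqref{eq:compressor}.

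First I would verify unbiasedness: conditioning on the $g_i(x)$ and using $\cC_i \in \mathbb{U}(\omega)$ (so $\Exp{\cC_i(y)} = y$) together with $\Exp{g_i(x)} = \nabla f_i(x)$ gives $\Exp{g(x)} = \frac{1}{n}\sum_i \nabla f_i(x) = \nabla f(x)$, which confirms Assumption~\ref{ass:unbiased} and lets me apply \eqref{eq:vardecomp} to write $\Exp{\norm{g(x)}^2} = \norm{\nabla f(x)}^2 + \Exp{\norm{g(x) - \nabla f(x)}^2}$. Since the workers compress independently, the cross terms vanish and the variance splits as a sum over $i$: $\Exp{\norm{g(x) - \nabla f(x)}^2} = \frac{1}{n^2}\sum_i \Exp{\norm{\cC_i(g_i(x)) - \nabla f_i(x)}^2}$.

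The heart of the argument is to bound each per-worker term. Writing $\cC_i(g_i) - \nabla f_i = (\cC_i(g_i) - g_i) + (g_i - \nabla f_i)$ and conditioning on $g_i$, the inner-product cross term vanishes since $\Exp{\cC_i(g_i) - g_i \mid g_i} = 0$, leaving $\Exp{\norm{\cC_i(g_i) - g_i}^2 \mid g_i} + \norm{g_i - \nabla f_i}^2$; the compressor bound \eqref{eq:compressor} controls the first summand by $\omega\norm{g_i}^2$, and after taking expectations and applying \eqref{eq:vardecomp} once more to $\Exp{\norm{g_i}^2} = \norm{\nabla f_i}^2 + \Exp{\norm{g_i - \nabla f_i}^2}$, I obtain $\Exp{\norm{\cC_i(g_i) - \nabla f_i}^2} \leq \omega\norm{\nabla f_i}^2 + (\omega+1)\sigma^2$ in the bounded-variance case (with $\sigma = 0$ in the exact-gradient parts). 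Summing produces $\Exp{\norm{g(x)}^2} \leq \norm{\nabla f(x)}^2 + \frac{\omega}{n}\cdot\frac{1}{n}\sum_i\norm{\nabla f_i(x)}^2 + \frac{(\omega+1)\sigma^2}{n}$.

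It remains to convert $\frac{1}{n}\sum_i\norm{\nabla f_i(x)}^2$ into the $(f(x)-f^*)$ form demanded by Assumption~\ref{ass:ABC}. For Parts~1 and~3 I would invoke Lemma~\ref{lemma:lipt_func_nonconvex} applied to each $f_i$ (valid under Assumption~\ref{ass:workers_lipschitz_constant} and the lower-boundedness supplying the $f_i^*$), giving $\norm{\nabla f_i(x)}^2 \leq 2 L_{\max}(f_i(x) - f_i^*)$; averaging and using $f = \frac{1}{n}\sum_i f_i$ with $\Delta^* = \frac{1}{n}\sum_i(f^* - f_i^*)$ yields $\frac{1}{n}\sum_i\norm{\nabla f_i(x)}^2 \leq 2L_{\max}(f(x)-f^*+\Delta^*)$, from which one reads off $A = \frac{\omega L_{\max}}{n}$, $B=1$, and $C = 2A\Delta^*$ (plus the extra $\frac{(\omega+1)\sigma^2}{n}$ in Part~3). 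Part~2 instead substitutes the strong-growth bound $\frac{1}{n}\sum_i\norm{\nabla f_i(x)}^2 \leq D\norm{\nabla f(x)}^2$ directly to get $A=C=0$, $B = 1 + \frac{D\omega}{n}$; Part~4 uses $\nabla f_i = \nabla f$ so that $\frac{1}{n}\sum_i\norm{\nabla f_i}^2 = \norm{\nabla f}^2$, giving $A=0$, $B=1+\frac{\omega}{n}$, $C = \frac{(\omega+1)\sigma^2}{n}$. The main obstacle I anticipate is purely bookkeeping: keeping the two layers of expectation (sampling and compression) cleanly separated so that all cross terms vanish, tracking the $\frac{1}{n}$ saving from variance reduction across workers, and correctly carrying the constant $\Delta^*$ that arises because the per-worker lower bounds $f_i^*$ need not equal $f^*$.
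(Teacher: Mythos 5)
Your proposal is correct and takes essentially the same approach as the paper's proof: the same ingredients (variance decomposition \eqref{eq:vardecomp}, the tower property, independence across workers, the compressor bound \eqref{eq:compressor}, and Lemma~\ref{lemma:lipt_func_nonconvex}, resp.\ Assumption~\ref{ass:strong_growth} or $f_i=f$, to handle $\frac{1}{n}\sum_i \norm{\nabla f_i(x)}^2$) yield the identical intermediate bound $\Exp{\norm{g(x)}^2} \leq \norm{\nabla f(x)}^2 + \frac{\omega}{n^2}\sum_i \norm{\nabla f_i(x)}^2 + \frac{(\omega+1)\sigma^2}{n}$ from which all four parts are read off. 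The only difference is bookkeeping order: you split across workers first and then separate compression noise from sampling noise within each worker, whereas the paper (following Proposition~4 of \cite{khaled2020better}) conditions on all $g_i$ jointly and peels off the compression variance before the sampling variance; this changes nothing of substance.
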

 
 In Section~\ref{sec:nonconvex}, we apply Proposition~\ref{prop:abccases} and state the corresponding theorems.

\subsection{Proof of the convergence result}

We will need the following two lemmas:

\begin{lemma}
    \label{abcrecursion}
    Consider sequences $(\delta^t)_t$, $(r^t)_t$ and $(s^t)_t$ such that $\delta^t, r^t, s^t \geq 0$ for all $t\geq 0$ and $s^0=0$. Suppose that
    \begin{align}
    \label{eq:abcrecursion}
        \delta^{t+1} + & a s^{t+1}
        \leq b \delta^{t} + a s^t - c r^t + d,
    \end{align}
    where $a,b,c,d$ are non-negative constants and $b \geq 1$. Then for any $T \geq 1$
    \begin{align*}
        \min_{0\leq t\leq T-1} r^t
        \leq \frac{b^T}{cT} \delta^{0} + \frac{d}{c}.
    \end{align*}
\end{lemma}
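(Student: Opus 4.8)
The plan is to collapse the two nonnegative sequences $\delta^t$ and $s^t$ into a single Lyapunov sequence and then turn the one-step inequality into a telescoped bound. First I would set $\Psi^t \eqdef \delta^t + a s^t \geq 0$. Since $b \geq 1$ and $s^t \geq 0$, we have $a s^t \leq b\, a s^t$, so the right-hand side of \eqref{eq:abcrecursion} obeys $b\delta^t + a s^t \leq b(\delta^t + a s^t) = b\Psi^t$. Hence the hypothesis simplifies to the clean scalar recursion $\Psi^{t+1} \leq b\Psi^t - c r^t + d$, and the assumption $s^0 = 0$ gives $\Psi^0 = \delta^0$.

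Next I would introduce $m \eqdef \min_{0\leq t\leq T-1} r^t$ and use $r^t \geq m$ to get $\Psi^{t+1} \leq b\Psi^t + (d - cm)$ for every $t \in \{0,\dots,T-1\}$. Because $b \geq 0$, this one-step inequality can be unrolled (multiplying by powers of $b$ and iterating), which yields
\begin{equation*}
\Psi^T \leq b^T \Psi^0 + (d - cm)\sum_{k=0}^{T-1} b^k = b^T \delta^0 + (d - cm) S_T,
\end{equation*}
where $S_T \eqdef \sum_{k=0}^{T-1} b^k$. The only fact I need about this geometric sum is the elementary bound $S_T \geq T$, which holds because each of the $T$ summands satisfies $b^k \geq 1$ for $b \geq 1$.

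Finally I would invoke $\Psi^T \geq 0$, so that $0 \leq b^T\delta^0 + (d - cm)S_T$, i.e.\ $(cm - d)S_T \leq b^T\delta^0$. Assuming $c > 0$ (otherwise the claimed right-hand side is $+\infty$ and there is nothing to prove), two cases remain: if $cm - d \leq 0$ then $m \leq d/c$ and the bound holds trivially since $\delta^0 \geq 0$; if $cm - d > 0$ I can divide by $S_T > 0$ and use $S_T \geq T$ to obtain $cm - d \leq b^T\delta^0 / S_T \leq b^T\delta^0 / T$, which rearranges to the claim $m \leq \frac{b^T}{cT}\delta^0 + \frac{d}{c}$. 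The only real subtlety — the ``hard part'' — is the bookkeeping around the sign of $d - cm$ and the replacement of the geometric factor $S_T$ by the lower bound $T$; getting the direction of every inequality right when $b > 1$, so that the additive term $d$ is controlled by $d/c$ rather than being amplified by the exponential factor $b^T$, is where care is needed.
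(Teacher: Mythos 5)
Your proof is correct and is essentially the same argument as the paper's: the paper multiplies the recursion by geometric weights $w_t \propto b^{-t}$, telescopes, and bounds the minimum by the weighted average, while you substitute the minimum $m$ for each $r^t$ first and then unroll the recursion directly --- two equivalent packagings of the same discounted-telescoping idea, relying on the same three ingredients (absorbing $a s^t$ via $b \geq 1$, the geometric sum being at least $T$, and nonnegativity of the final Lyapunov terms). The only cosmetic difference is that your early substitution of $m$ forces the sign analysis on $d - cm$, which the paper's ordering avoids; your handling of it is fine.
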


\begin{proof}
The proof follows similar steps as the proof of Lemma $2$ of \cite{khaled2020better} and we provide it for completeness. Let us fix $w_{-1}>0$ and define $w_t=\frac{w_{t-1}}{b}$. Multiplying \eqref{eq:abcrecursion} by $w_t$ gives
\begin{align*}
    w_t\delta^{t+1}
    + a w_t s^{t+1}
    & \leq b w_t \delta^{t}
    + a w_t s^t
    - c w_t r^t
    + d w_t \\
    &\leq w_{t-1} \delta^{t}
    + a w_{t-1} s^t
    - c w_t r^t
    + d w_t.
\end{align*}
Summing both sides of the inequality for $t=0,\ldots,T-1$, we obtain
\begin{align*}
    w_{T-1} \delta^{T}
    + & a w_{T-1} s^{T}
    \leq w_{-1} \delta^{0}
    + a w_{-1} s^0
    - c \sum_{t=0}^{T-1} w_t r^t
    + d \sum_{t=0}^{T-1} w_t.
\end{align*}
Rearranging and using the assumption that $s^0=0$ and non-negativity of $s^t$ gives
\begin{align*}
    c \sum_{t=0}^{T-1} w_t r^t
    + w_{T-1} \delta^{T}
    & \leq w_{-1} \delta^{0}
    + a w_{-1} s^0
    - a w_{T-1} s^{T}
    + d \sum_{t=0}^{T-1} w_t \\
    &\leq w_{-1} \delta^{0}
    + d \sum_{t=0}^{T-1} w_t.
\end{align*}
Next, using the non-negativity of $\delta^t$ and $w_t$, we have
\begin{align*}
    c \sum_{t=0}^{T-1} w_t r^t
    \leq c \sum_{t=0}^{T-1} w_t r^t
    + w_{T-1} \delta^{T}
    &\leq w_{-1} \delta^{0}
    + d \sum_{t=0}^{T-1} w_t.
\end{align*}
Letting $W_T \eqdef \sum_{t=0}^{T-1} w_t$ and dividing both sides of the inequality by $W_T$, we obtain
\begin{align*}
    c \min_{0\leq t\leq T-1} r^t
    \leq \frac{c}{W_T} \sum_{t=0}^{T-1} w_t r^t
    \leq \frac{w_{-1}}{W_T} \delta^{0}
    + d.
\end{align*}
Using the fact that
\begin{align*}
    W_T = \sum_{t=0}^{T-1} w_t
    \geq \sum_{t=0}^{T-1} \min_{0\leq t\leq T-1} w_t
    = T w_{T-1}
    = \frac{T w_{-1}}{b^T},
\end{align*}
we can finish the proof.
\end{proof}

\begin{lemma}
    \label{lemma:abc}
    Let Assumptions \ref{ass:lipschitz_constant}, \ref{ass:lower_bound}, \ref{ass:unbiased} and \ref{ass:ABC} hold, set $w^0 = x^0,$ and choose
    \begin{align*}
        \gamma &\leq \min\left\{
            \frac{1}{4 A},
            \frac{1}{4 BL},
            \frac{\alpha}{8 L}
            \right\}.
    \end{align*}
    Then
    \begin{align}
        \label{eq:lemma_rate}
        \min_{0\leq t\leq T-1} \Exp{\norm{\nabla f(x^{t})}^2}
        \leq \frac{8\left(1 + 2AL \gamma^2\right)^T}{\gamma T} \Delta_{0}
        + 8CL \gamma.
    \end{align}
\end{lemma}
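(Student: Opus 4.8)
The plan is to establish a one-step descent inequality for a suitable Lyapunov function that combines the function suboptimality $\delta^t \eqdef f(x^t) - f^*$ with a multiple of the error term $s^t \eqdef \norm{w^t - x^t}^2$, and then to invoke Lemma~\ref{abcrecursion} to convert this recursion into the stated $\min_t$ bound. First I would apply $L$-smoothness of $f$ (Assumption~\ref{ass:lipschitz_constant}) to the update $x^{t+1} = x^t - \gamma g(w^t)$, writing
\begin{align*}
f(x^{t+1}) \leq f(x^t) + \inp{\nabla f(x^t)}{x^{t+1} - x^t} + \frac{L}{2}\norm{x^{t+1} - x^t}^2,
\end{align*}
and then take the conditional expectation $\ExpSub{t+1}{\cdot}$. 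The subtlety here, and the main point of departure from the standard \citet{khaled2020better} analysis, is that $g(w^t)$ is an \emph{unbiased} estimator of $\nabla f(w^t)$ rather than of $\nabla f(x^t)$; so $\ExpSub{t+1}{g(w^t)} = \nabla f(w^t) \neq \nabla f(x^t)$, and the cross term becomes $-\gamma\inp{\nabla f(x^t)}{\nabla f(w^t)}$, which I must handle carefully.

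Next I would control this biased cross term. Writing $\nabla f(w^t) = \nabla f(x^t) + (\nabla f(w^t) - \nabla f(x^t))$ and using $\inp{a}{b} = \tfrac12(\norm{a}^2 + \norm{b}^2 - \norm{a-b}^2)$ from \eqref{eq:inp} together with $L$-smoothness, the discrepancy $\norm{\nabla f(w^t) - \nabla f(x^t)}^2 \leq L^2 \norm{w^t - x^t}^2 = L^2 s^t$ gets absorbed into the error term. For the second-order term, $\ExpSub{t+1}{\norm{g(w^t)}^2}$ is bounded using Assumption~\ref{ass:ABC} evaluated at $w^t$, giving $2A(f(w^t) - f^*) + B\norm{\nabla f(w^t)}^2 + C$; I then convert $f(w^t)$ and $\nabla f(w^t)$ back to quantities at $x^t$, again paying a price proportional to $s^t$ via smoothness and via Young's inequality \eqref{eq:young} / \eqref{eq:young_2}. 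The goal is to arrive at an inequality of the form $\ExpSub{t+1}{\delta^{t+1}} \leq (1 + 2AL\gamma^2)\delta^t - c\,\gamma\norm{\nabla f(x^t)}^2 + (\text{coeff})\,s^t + CL\gamma^2$ for an explicit positive $c$, after imposing $\gamma \leq \tfrac{1}{4BL}$ and $\gamma \leq \tfrac{1}{4A}$ to ensure the gradient coefficient stays bounded below by a constant like $\tfrac{\gamma}{8}$.

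The remaining ingredient is a contraction estimate for $s^{t+1} = \norm{w^{t+1} - x^{t+1}}^2$, mirroring the computation in the proof of Theorem~\ref{theorem:ef21_strong}. Using the contractivity \eqref{eq:contractive_compressor} of $\cC^P$ followed by \eqref{eq:young}, I expect
\begin{align*}
\ExpSub{t+1}{s^{t+1}} \leq \left(1 - \frac{\alpha}{2}\right)s^t + \frac{2\gamma^2}{\alpha}\ExpSub{t+1}{\norm{g(w^t)}^2},
\end{align*}
and then bounding $\ExpSub{t+1}{\norm{g(w^t)}^2}$ again via Assumption~\ref{ass:ABC}, with the stepsize restriction $\gamma \leq \tfrac{\alpha}{8L}$ used to make the net coefficient of $s^t$ strictly less than $1$ (something like $1 - \tfrac{\alpha}{4}$). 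The key balancing act, which I expect to be the main obstacle, is choosing the weight $a$ on $s^t$ in the Lyapunov function $\delta^t + a s^t$ so that the positive $s^t$ contribution from the descent step is exactly dominated by the $-\tfrac{\alpha}{4}a\,s^t$ contraction from the error step, while keeping the gradient coefficient and the constant term $d$ at the values needed to produce \eqref{eq:lemma_rate}. Once the combined recursion has the shape $\Exp{\delta^{t+1}} + a\Exp{s^{t+1}} \leq (1 + 2AL\gamma^2)(\Exp{\delta^t} + a\Exp{s^t}) - \tfrac{\gamma}{8}\Exp{\norm{\nabla f(x^t)}^2} + CL\gamma^2$, I identify $b = 1 + 2AL\gamma^2$, $c = \tfrac{\gamma}{8}$, $d = CL\gamma^2$, use $s^0 = 0$ (which holds since $w^0 = x^0$), and apply Lemma~\ref{abcrecursion} to obtain $\min_t \Exp{\norm{\nabla f(x^t)}^2} \leq \tfrac{8 b^T}{\gamma T}\Delta_0 + 8CL\gamma$, which is exactly \eqref{eq:lemma_rate}.
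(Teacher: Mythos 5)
Your overall architecture---a descent inequality for $\delta^t \eqdef \Exp{f(x^t)-f^*}$, a contraction inequality for $s^t \eqdef \Exp{\norm{w^t-x^t}^2}$, a Lyapunov combination $\delta^t + a s^t$, and an appeal to Lemma~\ref{abcrecursion} with $b = 1+2AL\gamma^2$, $c=\gamma/8$, $d=CL\gamma^2$---is exactly the paper's, and your handling of the biased cross term via \eqref{eq:inp} and smoothness also matches. The gap is in your contraction step. You propose
\begin{align*}
\ExpSub{t+1}{\norm{w^{t+1}-x^{t+1}}^2} \leq \left(1-\frac{\alpha}{2}\right)\norm{w^t-x^t}^2 + \frac{2\gamma^2}{\alpha}\ExpSub{t+1}{\norm{g(w^t)}^2},
\end{align*}
i.e., you apply \eqref{eq:contractive_compressor} and then Young's inequality \eqref{eq:young} directly to the stochastic displacement $x^t-\gamma g(w^t)-w^t$. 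This places the entire second moment of $g(w^t)$---hence, after invoking Assumption~\ref{ass:ABC}, all of $A$, $B$ and $C$---behind the factor $\nicefrac{2}{\alpha}$. The paper instead first applies the variance decomposition \eqref{eq:vardecomp}, which gives the exact identity
\begin{align*}
\ExpSub{t+1}{\norm{x^{t+1}-w^t}^2} = \gamma^2\,\ExpSub{t+1}{\norm{g(w^t)-\nabla f(w^t)}^2} + \norm{x^t-\gamma\nabla f(w^t)-w^t}^2,
\end{align*}
and pays the price $\nicefrac{2}{\alpha}$ only on the second, \emph{deterministic} term; the variance term (which is what carries $A$, $B-1$ and $C$, via \eqref{eq:abcgfdiff}) keeps a coefficient of order $\gamma^2$ with no $\nicefrac{1}{\alpha}$ inflation.

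This difference is not cosmetic: with your inequality the recursion cannot be closed under the lemma's stepsize condition. Tracking your version, the contraction inequality contributes $\frac{4A\gamma^2}{\alpha}\delta^t$ (instead of the paper's $2A(1-\alpha)\gamma^2\delta^t$), so keeping the $\delta^t$-coefficient of the combined recursion at or below $1+2AL\gamma^2$ forces $a \leq \frac{\alpha L}{4}$, while making the $s^t$-coefficient contract forces (even in the best case) $a \geq \frac{2L^2\gamma}{\alpha}$; these are compatible only if $\gamma \leq \frac{\alpha^2}{8L}$. Similarly, keeping the net coefficient of $\Exp{\norm{\nabla f(w^t)}^2}$ nonpositive and keeping the additive term of order $CL\gamma^2$ require $\gamma^2 \leq \frac{c_1\alpha^2}{BL^2}$ and $\gamma \leq \frac{c_2\alpha^2}{L}$ for absolute constants $c_1,c_2$. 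The hypotheses only provide $\gamma\leq\frac{\alpha}{8L}$, $\gamma\leq\frac{1}{4BL}$, $\gamma\leq\frac{1}{4A}$, hence (via $\min\{u,v\}\leq\sqrt{uv}$) bounds like $\gamma^2\leq\frac{\alpha}{32AL}$ and $\gamma^2\leq\frac{\alpha}{32BL^2}$---each short by a factor of $\alpha$. So for small $\alpha$ there are admissible stepsizes for which your recursion does not produce \eqref{eq:lemma_rate}; in particular the constant term would degrade from $8CL\gamma$ to order $\frac{CL\gamma}{\alpha}$. The missing idea is precisely the paper's variance-decomposition step before Young's inequality.
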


\begin{proof}
First, $L$-smoothness of $f$ implies that
\begin{align}
\label{eq:fwfx}
    f(w^{t}) &\leq f(x^{t}) + \inp{\nabla f(x^{t})}{w^{t} - x^{t}} + \frac{L}{2}\norm{w^{t} - x^{t}}^2 \nonumber \\
    &\overset{\eqref{eq:fenchel}}{\leq} f(x^{t}) + \frac{1}{2L} \norm{\nabla f(x^{t})}^2 + L \norm{w^{t} - x^{t}}^2
\end{align}
and
\begin{align*}
    f(x^{t+1})
    & \leq f(x^t) 
    + \langle\nabla f(x^t),x^{t+1} - x^{t}\rangle
    + \frac{L}{2} \|x^{t+1} - x^{t}\|^2 \\
    & = f(x^t) 
    - \gamma \langle\nabla f(x^t), g(w^{t})\rangle
    + \frac{L \gamma^2}{2} \|g(w^t)\|^2.
\end{align*}
Using the fact that $g(x)$ is an unbiased estimator of the true gradient, subtracting $f^*$ from both sides of the latter inequality and taking expectation given iterations $\{0, \dots, t\}$, we obtain
\begin{eqnarray*}
    \ExpSub{t+1}{f(x^{t+1})-f^*}
    &\leq& f(x^t) - f^*
    - \gamma \langle\nabla f(x^t), \nabla f(w^{t})\rangle
    + \frac{L \gamma^2}{2} \ExpSub{t+1}{\norm{g(w^t)}^2} \\
    & \overset{\eqref{ass:ABC}, \eqref{eq:inp}}{\leq}& f(x^t) - f^*
    - \frac{\gamma}{2} \|\nabla f(x^t)\|^2
    - \frac{\gamma}{2} \|\nabla f(w^t)\|^2
    + \frac{\gamma}{2} \|\nabla f(x^t) - \nabla f(w^t)\|^2 \\
    && \qquad + \frac{L \gamma^2}{2} \left( 2A(f(w^t)-f^*) + B \norm{\nabla f(w^t)}^2 + C \right) \\
    & \overset{\eqref{ass:lipschitz_constant}}{\leq}& f(x^t) - f^*
    - \frac{\gamma}{2} \|\nabla f(x^t)\|^2
    - \frac{\gamma}{2} \|\nabla f(w^t)\|^2
    + \frac{\gamma L^2}{2} \|x^t-w^t\|^2 \\
    && \qquad + AL \gamma^2(f(w^t)-f^*)
    + \frac{BL \gamma^2}{2}\norm{\nabla f(w^t)}^2
    + \frac{CL \gamma^2}{2} \\
    & = & f(x^t) - f^*
    - \frac{\gamma}{2} \|\nabla f(x^t)\|^2
    - \frac{\gamma}{2} \left( 1 - BL \gamma \right) \norm{\nabla f(w^t)}^2
    + \frac{L^2 \gamma}{2} \|x^t-w^t\|^2 \\
    && \qquad + AL \gamma^2(f(w^t)-f^*)
    + \frac{CL \gamma^2}{2} \\
    & \overset{\eqref{eq:fwfx}}{\leq} & f(x^t) - f^*
    - \frac{\gamma}{2} \|\nabla f(x^t)\|^2
    - \frac{\gamma}{2} \left( 1 - BL\gamma \right) \norm{\nabla f(w^t)}^2
    + \frac{L^2 \gamma}{2} \|x^t-w^t\|^2 \\
    && \qquad +  AL \gamma^2 \left(f(x^{t}) + \frac{1}{2L} \norm{\nabla f(x^{t})}^2 + L \norm{w^{t} - x^{t}}^2 - f^*\right)
    + \frac{CL \gamma^2}{2} \\
    & = &(1+AL \gamma^2) \left(f(x^{t}) - f^*\right)
    - \frac{\gamma}{2} \left( 1 -  A\gamma \right)
    \norm{\nabla f(x^{t})}^2
    - \frac{\gamma}{2} \left( 1 - BL\gamma \right) \norm{\nabla f(w^t)}^2 \\
    && \qquad
    + L^2 \gamma \left( \frac{1}{2} +  A\gamma \right) \norm{w^{t} - x^{t}}^2
    + \frac{CL \gamma^2}{2}.
\end{eqnarray*}
Hence, taking full expectation, for $\gamma \leq \frac{1}{4A}$, we have
\begin{align}
\label{eq:abcffdiff}
    \Exp{f(x^{t+1})-f^*}
    & \leq (1+AL \gamma^2) \Exp{f(x^{t})-f^*}
    - \frac{\gamma}{4} \Exp{\norm{\nabla f(x^{t})}^2} \\
    & \qquad - \frac{\gamma}{2} \left( 1 - BL \gamma \right) \Exp{\norm{\nabla f(w^t)}^2}
    + L^2 \gamma \Exp{\norm{w^{t} - x^{t}}^2}
    + \frac{CL \gamma^2}{2}. \nonumber
\end{align}
Next, variance decomposition and Assumption \ref{ass:ABC} gives
\begin{eqnarray}
\label{eq:abcgfdiff}
    \Exp{\norm{g(w^t) - \nabla f(w^t)}^2}
    &\overset{\eqref{eq:vardecomp}}{=}& \Exp{\norm{g(w^t)}^2} - \norm{\nabla f(w^t)}^2 \nonumber \\
    &\overset{\eqref{ass:ABC}}{\leq}& 2A(f(w^t)-f^*)
    + (B-1) \norm{\nabla f(w^t)}^2
    + C \nonumber \\
    &\overset{\eqref{eq:fwfx}}{\leq}& 2A \left( f(x^{t}) + \frac{1}{2L} \norm{\nabla f(x^{t})}^2 + L \norm{w^{t} - x^{t}}^2 - f^* \right) \nonumber \\
    &&\qquad + (B-1) \norm{\nabla f(w^t)}^2
    + C \nonumber \\
    &=& 2A \left( f(x^{t}) - f^* \right)
    +  \frac{A}{L} \norm{\nabla f(x^{t})}^2 + 2AL \norm{w^{t} - x^{t}}^2 \nonumber \\
    &&\qquad + (B-1) \norm{\nabla f(w^t)}^2
    + C.
\end{eqnarray}
Therefore, using the unbiasedness of $g(x)$, we can bound the expected distance between $w^{t+1}$ and $x^{t+1}$ as
\begin{eqnarray*}
    \Exp{\norm{w^{t+1}-x^{t+1}}^2}
    &=& \Exp{\norm{w^t + \mathcal{C}_p(x^{t+1} - w^t) - x^{t+1}}^2} \\
    &\overset{\eqref{eq:biased_compressor}}{\leq}& (1-\alpha) \Exp{\norm{x^{t+1} - w^t}^2} \\
    &= &\left(1-\alpha\right) \Exp{\norm{x^{t} - \gamma g^t - w^t}^2} \\
    &\overset{\eqref{eq:vardecomp}}{=} &\left(1-\alpha\right) \gamma^2 \Exp{\norm{g^t - \nabla f(w^t)}^2}
    + \left(1-\alpha\right) \Exp{\norm{x^{t} - \gamma \nabla f(w^t) - w^t}^2} \\
    &\overset{\eqref{eq:young}, \eqref{eq:ineq1}, \eqref{eq:ineq2}}{\leq}& \left(1-\alpha\right) \gamma^2 \Exp{\norm{g^t - \nabla f(w^t)}^2}
    + \left(1-\frac{\alpha}{2}\right) \Exp{\norm{x^t - w^t}^2} \\
    &&\qquad + \frac{2 \gamma^2}{\alpha} \Exp{\norm{\nabla f(w^{t})}^2} \\
    &\overset{\eqref{eq:abcgfdiff}}{\leq} &2A\left(1-\alpha\right) \gamma^2 \left( f(x^{t}) - f^* \right)
    + \frac{A \left(1-\alpha\right) \gamma^2}{L} \norm{\nabla f(x^{t})}^2 \\
    &&\qquad + 2A L \left(1-\alpha\right) \gamma^2 \norm{w^{t} - x^{t}}^2
    + (B-1)\left(1-\alpha\right) \gamma^2 \norm{\nabla f(w^t)}^2 \\
    &&\qquad + C \left(1-\alpha\right) \gamma^2
    + \left(1-\frac{\alpha}{2}\right) \Exp{\norm{x^t - w^t}^2}
    + \frac{2 \gamma^2}{\alpha} \Exp{\norm{\nabla f(w^{t})}^2}.
\end{eqnarray*}
Hence, taking expectation, for $\gamma \leq \sqrt{\frac{\alpha}{8AL(1-\alpha)}}$
\begin{align}
\label{eq:abcwxdiff}
    \Exp{\norm{w^{t+1}-x^{t+1}}^2}
    &\leq 2A\left(1-\alpha\right) \gamma^2 \Exp{ f(x^{t}) - f^* }
    + \frac{A \left(1-\alpha\right) \gamma^2}{L} \Exp{\norm{\nabla f(x^{t})}^2} \nonumber \\
    &\qquad + \gamma^2 \left( \frac{2}{\alpha} + (B-1)\left(1-\alpha\right) \right) \Exp{\norm{\nabla f(w^{t})}^2} \nonumber \\
    &\qquad+ \left(1 - \frac{\alpha}{2} + 2A L \left(1-\alpha\right) \gamma^2\right) \Exp{\norm{x^t - w^t}^2}
    + C \left(1-\alpha\right) \gamma^2 \nonumber \\
    &\leq 2A\left(1-\alpha\right) \gamma^2 \Exp{ f(x^{t}) - f^* }
    + \frac{A \left(1-\alpha\right) \gamma^2}{L} \Exp{\norm{\nabla f(x^{t})}^2} \nonumber \\
    &\qquad + \gamma^2 \left( \frac{2}{\alpha} + (B-1)\left(1-\alpha\right) \right) \Exp{\norm{\nabla f(w^{t})}^2} \nonumber \\
    &\qquad+ \left(1 - \frac{\alpha}{4}\right) \Exp{\norm{x^t - w^t}^2}
    + C \left(1-\alpha\right) \gamma^2.
\end{align}
Adding a $\frac{4L^2\gamma}{\alpha}$ multiple of \eqref{eq:abcwxdiff} to \eqref{eq:abcffdiff}, we obtain
\begin{align*}
    \Exp{f(x^{t+1})-f^*}
    + & \frac{4L^2\gamma}{\alpha} \Exp{\norm{w^{t+1}-x^{t+1}}^2} \\
    & \leq \left( 1 + AL \gamma^2 + \frac{8AL^2(1-\alpha) \gamma^3}{\alpha} \right) \Exp{f(x^{t})-f^*} \\
    &\qquad - \frac{\gamma}{4} \left( 1 - \frac{16AL \left(1-\alpha\right) \gamma^2}{\alpha} \right) \Exp{\norm{\nabla f(x^{t})}^2} \\
    &\qquad - \frac{\gamma}{2} \left( 1 - BL \gamma
    - \frac{8 L^2 \gamma^2}{\alpha} \left( \frac{2}{\alpha} + (B-1)\left(1-\alpha\right) \right)
    \right) \Exp{\norm{\nabla f(w^t)}^2} \\
    &\qquad + \frac{4L^2\gamma}{\alpha} \Exp{\norm{w^{t} - x^{t}}^2}
    + \frac{CL \gamma^2}{2}
    + \frac{4C L^2(1-\alpha)\gamma^3}{\alpha}.
\end{align*}
Then, provided that
\begin{align*}
    \gamma &\leq \min\left\{
    \frac{1}{4 BL},
    \frac{\alpha}{8 L},
    \sqrt{\frac{\alpha}{32 (B - 1) (\alpha - 1) L^2}},
    \sqrt{\frac{\alpha}{32AL(1-\alpha)}},
    \right\} \\
    &=\min\left\{
        \frac{1}{4 BL},
        \frac{\alpha}{8 L},
        \sqrt{\frac{\alpha}{32AL(1-\alpha)}},
        \right\},
\end{align*}
(where we used $\min\{a, b\}\leq \sqrt{ab}$ for all $a, b \in \R^+$), this gives
\begin{align*}
    & \Exp{f(x^{t+1})-f^*}
    + \frac{4L^2\gamma}{\alpha} \Exp{\norm{w^{t+1}-x^{t+1}}^2}
    \leq \left( 1 + 2AL \gamma^2 \right) \Exp{f(x^{t})-f^*} \\
    &\qquad - \frac{\gamma}{8} \Exp{\norm{\nabla f(x^{t})}^2}
    + \frac{4L^2\gamma}{\alpha} \Exp{\norm{w^{t} - x^{t}}^2}
    + CL \gamma^2.
\end{align*}
Denoting $a \eqdef \frac{4L^2\gamma}{\alpha}$, $b \eqdef 1 + 2AL \gamma^2$, $c \eqdef \frac{\gamma}{8}$ and $d \eqdef CL \gamma^2$, this is equivalent to
\begin{align}
    \delta^{t+1} + & a s^{t+1}
    \leq b \delta^{t} + a s^t - c r^t + d,
\end{align}
where $\delta^t\eqdef \Exp{f(x^{t}) - f^*}$, $r^t \eqdef \Exp{\norm{\nabla f(x^{t})}^2}$ and $s^t \eqdef \Exp{\norm{w^{t} - x^{t}}^2}$. Hence, using Lemma \ref{abcrecursion}, for any $T \geq 1$
\begin{align*}
    \min_{0\leq t\leq T-1} r^t
    \leq \frac{b^T}{cT} \delta^{0} + \frac{d}{c},
\end{align*}
which proves \eqref{eq:lemma_rate}. In the proof, we have the following constraints on $\gamma$:
\begin{align*}
    \gamma &\leq \min\left\{
        \frac{1}{4 A},
        \frac{1}{4 BL},
        \frac{\alpha}{8 L},
        \sqrt{\frac{\alpha}{32AL(1-\alpha)}}
        \right\}.
\end{align*}
Using the inequality $\min\{a, b\}\leq \sqrt{ab}$ for all $a, b \in \R^+,$ this can be simplified to
\begin{align*}
    \gamma &\leq \min\left\{
        \frac{1}{4 A},
        \frac{1}{4 BL},
        \frac{\alpha}{8 L}
        \right\}.
\end{align*}
\end{proof}

\THEOREMABC*

\begin{proof}
By Lemma~\ref{lemma:abc}, we have
\begin{align*}
    \min_{0\leq t\leq T-1} \Exp{\norm{\nabla f(x^{t})}^2}
    \leq \frac{8\left(1 + 2AL \gamma^2\right)^T}{\gamma T} \Delta_{0}
    + 8CL \gamma
\end{align*}
provided that
$\gamma \leq \min\left\{
    \frac{1}{4 A},
    \frac{1}{4 BL},
    \frac{\alpha}{8 L}
    \right\}.$
Now, using the fact that $1+x\leq e^x$ and the assumption $\gamma \leq \frac{1}{\sqrt{2ALT}}$, we obtain
\begin{align*}
    \left(1 + 2AL \gamma^2\right)^T
    \leq \exp\left(2ALT \gamma^2\right)
    \leq \exp(1)
    < 3.
\end{align*}
Hence
\begin{align*}
    \min_{0\leq t\leq T-1} \Exp{\norm{\nabla f(x^{t})}^2}
    \leq \frac{24}{\gamma T} \Delta_{0}
    + 8CL \gamma.
\end{align*}
In order to obtain  $\frac{24}{\gamma T} \Delta_{0} + 8CL \gamma \leq \varepsilon$, we require that both terms are no larger than $\frac{\varepsilon}{2}$, which is equivalent to
\begin{align}
\label{eq:abct}
    T \geq \frac{48\Delta_{0}}{\gamma\varepsilon}, \\
    \gamma \leq \frac{\varepsilon}{16CL}.
\end{align}
We thus require that:
\begin{align*}
    \gamma \leq \min\left\{
        \frac{1}{4 A},
        \frac{1}{4 BL},
        \frac{\alpha}{8 L},
        \frac{1}{\sqrt{2ALT}},
        \frac{\varepsilon}{16CL}
        \right\}.
\end{align*}
which, combined with \eqref{eq:abct} gives:
\begin{align*}
    T &\geq 
    \frac{48\Delta_{0}}{\varepsilon}
    \max\left\{
    4 A,
    4BL,\frac{8L}{\alpha},
    \frac{96\Delta_{0}AL}{\varepsilon},
    \frac{16CL}{\varepsilon}
    \right\}. 
\end{align*}
It remains to notice that the term $4A$ can be dropped, thus simplifying the constraints to 
\begin{align*}
    \gamma &\leq \min\left\{
        \frac{1}{4 BL},
        \frac{\alpha}{8 L},
        \frac{1}{\sqrt{2ALT}},
        \frac{\varepsilon}{16CL}
        \right\}.
\end{align*}
and 
\begin{align*}
    T \geq 
    \frac{48\Delta_{0}}{\varepsilon}
\max\left\{
4BL,\frac{8L}{\alpha},
\frac{96\Delta_{0}AL}{\varepsilon},
\frac{16CL}{\varepsilon}
\right\}.
\end{align*}
Indeed, if $\norm{\nabla f(x^0)}^2 \leq \varepsilon,$ then \eqref{eq:norm_eps} holds for any $\gamma > 0.$
Let us now assume that $\norm{\nabla f(x^0)}^2 > \varepsilon.$ The above constraints imply that
$\frac{1}{\sqrt{2 A L T}} \leq \frac{\varepsilon}{96 \Delta_0 A L}.$
Moreover, from Lemma~\ref{lemma:lipt_func_nonconvex}, we know that $\varepsilon < \norm{\nabla f(x^0)}^2 \leq 2L \Delta^0.$ Thus
$\frac{1}{\sqrt{2 A L T}} \leq \frac{1}{48 A}.$ Similarly, we see that $\frac{96\Delta_{0}AL}{\varepsilon} \geq 48 A.$
\end{proof}

\subsection{Proof of Proposition \ref{prop:abccases}}

\begin{proof}
    \begin{enumerate}
    \item Using independence of $\cC_1, \ldots, \cC_n$, we have
    \begin{eqnarray*}
        \Exp{ \norm{g(x)}^2 }
        &= &\Exp{ \norm{\frac{1}{n} \sum_{i=1}^n \cC_i \left(\nabla f_i(x)\right)}^2 } \\
        &\stackrel{\eqref{eq:vardecomp}}{=}& \Exp{ \norm{ \frac{1}{n} \sum_{i=1}^n \left(\cC_i \left(\nabla f_i(x)\right) - \nabla f_i(x) \right) }^2}
        + \norm{ \nabla f(x)}^2 \\
        &=& \frac{1}{n^2} \sum_{i=1}^n \Exp{ \norm{\cC_i \left(\nabla f_i(x)\right) - \nabla f_i(x)}^2}
        + \norm{ \nabla f(x)}^2 \\
        &\stackrel{\eqref{eq:compressor}}{\leq}& \frac{1}{n^2} \sum_{i=1}^n \omega \norm{ \nabla f_i(x) }^2
        + \norm{ \nabla f(x)}^2 \\
        &\stackrel{\eqref{lemma:lipt_func_nonconvex}}{\leq}& \frac{\omega}{n^2} \sum_{i=1}^n 2L_i (f_i(x) - f_i^*)
        + \norm{ \nabla f(x)}^2 \\
        &\leq& \frac{2\omega L_{max}}{n^2} \sum_{i=1}^n (f_i(x) - f_i^*)
        + \norm{ \nabla f(x)}^2\\
        &=& 2 A (f(x) - f^*) + \norm{ \nabla f(x)}^2 + 2A\Delta^*,
    \end{eqnarray*}
    where $A \eqdef \frac{\omega L_{max}}{n}$.
    \item Starting as in part \ref{example_full_grad} of the proof, we obtain
    \begin{align*}
        \Exp{ \norm{g(x)}^2 }
        \leq \frac{1}{n^2} \sum_{i=1}^n \omega \norm{ \nabla f_i(x) }^2
        + \norm{ \nabla f(x)}^2 
        \stackrel{\eqref{ass:strong_growth}}{\leq} \left( \frac{D \omega}{n} + 1 \right) \norm{ \nabla f(x) }^2.
    \end{align*}
    \item First let us note that
    \begin{align*}
        \Exp{\norm{g_i(x)}^2}
        \stackrel{\eqref{eq:vardecomp}}{=} \Exp{\norm{g_i(x) - \nabla f_i(x)}^2} + \norm{\nabla f_i(x)}^2 
        \leq \sigma^2 + \norm{\nabla f_i(x)}^2.
    \end{align*}
    Following steps similar to the proof of Proposition $4$ of \cite{khaled2020better}, unbiasedness of the stochastic gradients gives
    \begin{eqnarray*}
        \Exp{ \norm{g(x)}^2 }
        &\overset{\eqref{eq:tower}}{=}& \Exp{ \Exp{ \norm{\frac{1}{n} \sum_{i=1}^n \cC_i \left(g_i(x)\right)}^2 \,|\, g_1(x),\ldots,g_n(x) }} \\
        &\overset{\eqref{eq:vardecomp}}{=}& \Exp{ \Exp{ \norm{\frac{1}{n} \sum_{i=1}^n \left(\cC_i \left(g_i(x)\right) - g_i(x)\right)}^2 \,|\, g_1(x),\ldots,g_n(x) }
        + \norm{\frac{1}{n} \sum_{i=1}^n g_i(x)}^2} \\
        &\overset{\eqref{eq:vardecomp}}{=}& \Exp{ \frac{1}{n^2} \sum_{i=1}^n \Exp{ \norm{\cC_i \left(g_i(x)\right) - g_i(x)}^2 \,|\, g_1(x),\ldots,g_n(x)}} \\
        &&\qquad + \Exp{\norm{\frac{1}{n} \sum_{i=1}^n \left(g_i(x) - \nabla f_i(x)\right)}^2} + \norm{\nabla f(x)}^2 \\
        &\leq& \frac{\omega}{n^2} \sum_{i=1}^n \Exp{\norm{g_i(x)}^2}
        + \Exp{\norm{\frac{1}{n} \sum_{i=1}^n \left(g_i(x) - \nabla f_i(x)\right)}^2} + \norm{\nabla f(x)}^2 \\
        &\leq& \frac{\omega}{n^2} \sum_{i=1}^n \left(\norm{\nabla f_i(x)}^2 + \sigma^2\right)
        + \frac{1}{n^2} \sum_{i=1}^n \Exp{\norm{g_i(x) - \nabla f_i(x)}^2} + \norm{\nabla f(x)}^2 \\
        &\overset{\eqref{lemma:lipt_func_nonconvex}}{\leq}& \frac{\omega}{n^2} \sum_{i=1}^n \left( 2L_i \left(f_i(x)-f_i^*\right) + \sigma^2\right)
        + \frac{\sigma^2}{n} + \norm{\nabla f(x)}^2 \\
        &= &2A\left(f(x)-f^*\right)
        + \norm{\nabla f(x)}^2 + C,
    \end{eqnarray*}
    where $A \eqdef \frac{1}{n}\omega L_{max}$ and $C \eqdef 2A\Delta^* + \frac{\omega+1}{n} \sigma^2$.
    \item Starting as in part \ref{example_bdd_var} and using the assumption $f_i=f$, we have:
    \begin{eqnarray*}
        \Exp{ \norm{g(x)}^2 }
        &\leq& \frac{\omega}{n^2} \sum_{i=1}^n \Exp{\norm{g_i(x)}^2}
        + \Exp{\norm{\frac{1}{n} \sum_{i=1}^n g_i(x) - \nabla f(x)}^2} + \norm{\nabla f(x)}^2 \\
        &\overset{\eqref{eq:vardecomp}}{=}& \frac{\omega}{n^2} \sum_{i=1}^n \left( \Exp{\norm{g_i(x) - \nabla f(x)}^2} + \norm{\nabla f(x)}^2 \right) \\
        &&\qquad + \frac{1}{n^2} \sum_{i=1}^n \Exp{\norm{g_i(x) - \nabla f(x)}^2}
        + \norm{\nabla f(x)}^2 \\
        &\leq& \frac{\omega+1}{n} \sigma^2
        + \left( \frac{\omega}{n} + 1 \right) \norm{\nabla f(x)}^2.
    \end{eqnarray*}
\end{enumerate}
\end{proof}

\clearpage
\section{Proofs for \algname{EF21-P + DIANA} in the Convex Case}
\label{sec:proofs_diana}

First, we prove an auxiliary theorem:
\begin{theorem}
    \label{theorem:general_diana}
    Let us assume that Assumptions~\ref{ass:lipschitz_constant}, \ref{ass:workers_lipschitz_constant} and \ref{ass:convex} hold, $\beta \in \left[0, \frac{1}{\omega + 1}\right],$ and 
    \begin{align}
       &\gamma \leq \min\left\{\frac{n}{160 \omega L_{\max}}, \frac{\alpha}{100 L}, \frac{\beta}{\mu}\right\}. \label{eq:gamma:diana}
    \end{align}
    Then Algorithm~\ref{algorithm:diana_ef21_p} guarantees that
    \begin{align}
       &\frac{1}{2\gamma}\Exp{\norm{x^{t+1} - x^*}^2} + \Exp{f(x^{t+1}) - f(x^*)} \nonumber\\
       &\quad + \kappa \frac{1}{n}\sum_{i=1}^n \Exp{\norm{h^{t+1}_i - \nabla f_i(x^*)}^2} + \nu \Exp{\norm{w^{t+1}-x^{t+1}}^2} \nonumber\\
       &\leq \frac{1}{2\gamma}\left(1 - \frac{\gamma \mu}{2}\right)\Exp{\norm{x^t - x^*}^2} + \frac{1}{2}\Exp{f(x^{t}) - f(x^{*})} \nonumber\\
       &\quad + \kappa \left(1 - \frac{\gamma \mu}{2}\right)\frac{1}{n}\sum_{i=1}^n \Exp{\norm{h^t_i - \nabla f_i(x^*)}^2} + \nu \left(1 - \frac{\gamma \mu}{2}\right)\Exp{\norm{w^t - x^t}^2}, \label{eq:main_diana}
    \end{align}
    where $\kappa \leq \frac{8 \gamma \omega}{n \beta}$ and $\nu \leq \frac{192\gamma\omega \widehat{L}^2}{n \alpha} + \frac{32L}{\alpha}.$
 \end{theorem}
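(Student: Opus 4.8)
The plan is to prove \eqref{eq:main_diana} by assembling three coupled one-step recursions---one for the iterate/function-value pair, one for the \algname{DIANA} gradient shifts $\{h_i^t\}$, and one for the \algname{EF21-P} model error $\norm{w^t-x^t}^2$---and then fixing the weights $\kappa,\nu$ so that all cross terms close up under the stepsize bound \eqref{eq:gamma:diana}. The natural preliminary step is to record the two properties of the server estimator $g^t = h^t + m^t$. Since each $\cC_i^D\in\mathbb{U}(\omega)$ is unbiased and the compressors are independent, conditioning on iteration $t$ gives $\ExpSub{t+1}{g^t}=\nabla f(w^t)$ (using $h^t=\frac1n\sum_i h_i^t$), and via \eqref{eq:vardecomp} and \eqref{eq:compressor} the variance bound $\ExpSub{t+1}{\norm{g^t-\nabla f(w^t)}^2}\le\frac{\omega}{n^2}\sum_i\norm{\nabla f_i(w^t)-h_i^t}^2$. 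Splitting $\norm{\nabla f_i(w^t)-h_i^t}^2$ through \eqref{eq:young_2} into a shift part $\norm{h_i^t-\nabla f_i(x^*)}^2$ and a smoothness part bounded by $\frac1n\sum_i\norm{\nabla f_i(w^t)-\nabla f_i(x^*)}^2\le\widehat L^2\norm{w^t-x^*}^2$ (Assumption~\ref{ass:workers_lipschitz_constant}) ties the variance precisely to the two Lyapunov components weighted by $\kappa$ and $\nu$. Crucially, everything is anchored at $w^t$, since $g^t$ is unbiased for $\nabla f(w^t)$ and not for $\nabla f(x^t)$.

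Next I would derive the three blocks. For the iterate block I mimic the proof of Theorem~\ref{theorem:ef21_strong}: $L$-smoothness at $w^t$ and $\mu$-strong convexity (Assumption~\ref{ass:convex}) give $f(x^{t+1})\le f(x^*)+\inp{\nabla f(w^t)}{x^{t+1}-x^*}-\frac{\mu}{2}\norm{w^t-x^*}^2+\frac{L}{2}\norm{x^{t+1}-w^t}^2$, after which $\ExpSub{t+1}{\inp{\nabla f(w^t)}{x^{t+1}-x^*}}$ is converted into telescoping distances by the stochastic identity $2\gamma\inp{g^t}{x^{t+1}-x^*}=\norm{x^t-x^*}^2-\norm{x^{t+1}-x^*}^2-\norm{x^{t+1}-x^t}^2$, which produces an extra $\gamma\,\mathrm{Var}(g^t)$ term. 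The $x^t$-versus-$w^t$ gap is absorbed with \eqref{eq:young_2} and $\frac{\mu}{4}\norm{x^t-x^*}^2\le\frac{\mu}{2}\norm{w^t-x^*}^2+L\norm{w^t-x^t}^2$, converting $-\frac{\mu}{2}\norm{w^t-x^*}^2$ into the factor $\left(1-\frac{\gamma\mu}{2}\right)$ on $\frac{1}{2\gamma}\norm{x^t-x^*}^2$ at the cost of an $O(L)\norm{w^t-x^t}^2$ term. For the shift block I expand $\norm{h_i^{t+1}-\nabla f_i(x^*)}^2$ using $\ExpSub{t+1}{m_i^t}=\nabla f_i(w^t)-h_i^t$ and $\ExpSub{t+1}{\norm{m_i^t}^2}\le(\omega+1)\norm{\nabla f_i(w^t)-h_i^t}^2$; with $\beta=\frac{1}{\omega+1}$ this contracts $\frac1n\sum_i\norm{h_i^t-\nabla f_i(x^*)}^2$ while leaving a positive multiple of $\widehat L^2\norm{w^t-x^*}^2$. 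For the error-feedback block I reuse the computation in Theorem~\ref{theorem:ef21_strong} with $g^t$ in place of $\nabla f(w^t)$: \eqref{eq:contractive_compressor} gives $(1-\alpha)\ExpSub{t+1}{\norm{x^{t+1}-w^t}^2}$, \eqref{eq:vardecomp} peels off $\gamma^2\mathrm{Var}(g^t)$, and \eqref{eq:young},\eqref{eq:ineq1},\eqref{eq:ineq2} reduce the remainder to $\left(1-\frac\alpha2\right)\norm{w^t-x^t}^2+\frac{2\gamma^2}{\alpha}\norm{\nabla f(w^t)}^2$, the gradient norm then split by smoothness and Lemma~\ref{lemma:lipt_func} into an $L^2\norm{w^t-x^t}^2$ piece and an $\frac{8\gamma^2 L}{\alpha}(f(x^t)-f(x^*))$ piece.

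Finally I would take the weighted sum (iterate block) $+\,\kappa\cdot$(shift block) $+\,\nu\cdot$(error block) and collect coefficients on each of $\norm{x^t-x^*}^2$, $f(x^t)-f(x^*)$, $\frac1n\sum_i\norm{h_i^t-\nabla f_i(x^*)}^2$ and $\norm{w^t-x^t}^2$. The choice $\kappa\le\frac{8\gamma\omega}{n\beta}$ makes the shift contraction dominate the variance routed into it, and $\nu\le\frac{192\gamma\omega\widehat L^2}{n\alpha}+\frac{32L}{\alpha}$ is the smallest weight for which the $\left(1-\frac\alpha2\right)$ contraction of the error block, together with the incoming $O(L)\norm{w^t-x^t}^2$ and $\gamma^2\widehat L^2$-variance terms, still yields the common rate $\left(1-\frac{\gamma\mu}{2}\right)$. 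Each stepsize constraint then polices one residual: $\gamma\le\frac{\alpha}{100L}$ shrinks the $\nu\cdot\frac{8\gamma^2 L}{\alpha}(f(x^t)-f(x^*))$ term below the coefficient $\frac12$ permitted on the right-hand side of \eqref{eq:main_diana}, $\gamma\le\frac{\sqrt{n\alpha}}{20\sqrt\omega\widehat L}$ and $\gamma\le\frac{n}{160\omega L_{\max}}$ tame the two variance contributions ($\widehat L$ through the $w$-distance and $L_{\max}$ through the shift), and $\gamma\le\frac{\beta}{\mu}$ ensures $\frac{\gamma\mu}{2}\le\frac12$ so all contraction factors are valid. The main obstacle is exactly this coupling: because the estimator is unbiased only for $\nabla f(w^t)$, the \algname{DIANA} variance lives at $w^t$ and therefore feeds both the shift term and, through $\widehat L$, the \algname{EF21-P} error term, while the error term feeds $O(L)\norm{w^t-x^t}^2$ back into the iterate block. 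Closing these three interlocking recursions with a single stepsize, rather than handling the two compression noises separately, is what forces the four-way minimum in \eqref{eq:gamma:diana} and is the delicate part of the bookkeeping.
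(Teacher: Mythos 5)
Your overall architecture is exactly the paper's: the same three one-step recursions (iterate/function value, \algname{DIANA} shifts, \algname{EF21-P} error), the same conversion of the bias term into $\gamma\ExpSub{t+1}{\norm{g^t-\nabla f(w^t)}^2}$ via the identity $2\gamma\inp{g^t}{x^{t+1}-x^*}=\norm{x^t-x^*}^2-\norm{x^{t+1}-x^*}^2-\norm{x^{t+1}-x^t}^2$, the same weighted sum with $\kappa,\nu$ chosen to close the cross terms, and essentially the same roles for the stepsize constraints. However, there is a genuine gap in the one step that makes the bookkeeping actually close. In both your variance bound and your shift block you bound the residual by
\begin{equation*}
\frac{1}{n}\sum_{i=1}^n\norm{\nabla f_i(w^t)-\nabla f_i(x^*)}^2 \leq \widehat{L}^2\norm{w^t-x^*}^2
\end{equation*}
and assert that this ``ties the variance precisely to the two Lyapunov components weighted by $\kappa$ and $\nu$.'' It does not: the $\nu$-weighted Lyapunov component is $\norm{w^t-x^t}^2$, not $\norm{w^t-x^*}^2$, and $\norm{w^t-x^*}^2$ appears nowhere in \eqref{eq:main_diana}. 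The paper instead inserts $\nabla f_i(x^t)$ and splits the residual into $\frac{1}{n}\sum_i\norm{\nabla f_i(w^t)-\nabla f_i(x^t)}^2\leq\widehat{L}^2\norm{w^t-x^t}^2$ (a Lyapunov term) plus, via co-coercivity (Lemma~\ref{lemma:lipt_func}) and $\frac{1}{n}\sum_i\nabla f_i(x^*)=\nabla f(x^*)=0$, the bound $\frac{1}{n}\sum_i\norm{\nabla f_i(x^t)-\nabla f_i(x^*)}^2\leq 2L_{\max}\left(f(x^t)-f(x^*)\right)$ (also a Lyapunov term). This is precisely where the $L_{\max}$ and $f(x^t)-f(x^*)$ contributions come from---the ones you later claim are policed by $\gamma\leq\frac{n}{160\omega L_{\max}}$ and by the coefficient $\frac12$ on the right-hand side; your stated decomposition produces neither, so the accounting in your final paragraph is inconsistent with your own blocks.

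The gap is not cosmetic. The obvious patch $\norm{w^t-x^*}^2\leq 2\norm{w^t-x^t}^2+2\norm{x^t-x^*}^2$ injects a term of order $\frac{\gamma\omega\widehat{L}^2}{n}\norm{x^t-x^*}^2$ into the right-hand side, but the iterate block has already spent the entire budget $\frac{1}{2\gamma}\left(1-\frac{\gamma\mu}{2}\right)$ on $\norm{x^t-x^*}^2$; absorbing the extra term would force a condition of the form $\gamma\lesssim \frac{n\mu}{\omega\widehat{L}^2}$, which degenerates as $\mu\to 0$. Since Theorem~\ref{theorem:general_diana} is stated for $\mu\geq 0$ and is invoked with $\mu=0$ to prove Theorem~\ref{theorem:diana_general_convex}, a proof along your lines would not establish the claimed result. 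Anchoring the gradient differences at $x^t$, and paying with $L_{\max}$ through co-coercivity rather than with $\widehat{L}$ through distances to $x^*$, is the essential device. A minor further correction: $\gamma\leq\frac{\beta}{\mu}$ is not there to guarantee $\frac{\gamma\mu}{2}\leq\frac12$; its role is to convert the shift-block contraction $1-\frac{\beta}{2}$ into the common factor $1-\frac{\gamma\mu}{2}$ (and, similarly, $\gamma\leq\frac{\alpha}{4\mu}$, implied by $\gamma\leq\frac{\alpha}{100L}$, converts the error-block contraction $1-\frac{\alpha}{8}$).
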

 
 \begin{proof}
    From $L$-smoothness (Assumption~\ref{ass:lipschitz_constant}) of the function $f$, we have
    \begin{eqnarray*}
       f(x^{t+1}) &\leq& f(w^t) + \inp{\nabla f(w^t)}{x^{t+1} - w^t} + \frac{L}{2}\norm{x^{t+1} - w^t}^2 \\
       &\overset{\textnormal{convexity}}{\leq}& f(x^*) + \inp{\nabla f(w^t)}{x^{t+1} - x^*} - \frac{\mu}{2} \norm{w^t - x^*}^2 + \frac{L}{2}\norm{x^{t+1} - w^t}^2 \\
       &=& f(x^*) + \inp{g^t}{x^{t+1} - x^*} + \inp{\nabla f(w^t) - g^t}{x^{t+1} - x^*} \\
       &&\qquad+ \frac{L}{2}\norm{x^{t+1} - w^t}^2 - \frac{\mu}{2} \norm{w^t - x^*}^2.
    \end{eqnarray*}
    We now reprove a well-known equality from the convex world. Noting that $x^{t+1} = x^t - \gamma g^t,$ we obtain
    \begin{align}
    \label{eq:equalconv}
       \norm{x^t - x^*}^2 - &\norm{x^{t+1} - x^*}^2 - \norm{x^{t+1} - x^{t}}^2 \nonumber \\
       &=\inp{x^t - x^{t+1}}{x^t - 2x^* + x^{t+1}} - \inp{x^{t+1} - x^{t}}{x^{t+1} - x^{t}} \nonumber \\
       &=2\inp{x^t - x^{t+1}}{x^{t+1} - x^*} \nonumber \\
       &=2\gamma\inp{g^t}{x^{t+1} - x^*}.
    \end{align}
    Substituting \eqref{eq:equalconv} in the inequality gives
    \begin{align*}
       f(x^{t+1}) &\leq f(x^*) + \inp{\nabla f(w^t) - g^t}{x^{t+1} - x^*} \\
       &\quad + \frac{1}{2\gamma}\norm{x^t - x^*}^2 - \frac{1}{2\gamma}\norm{x^{t+1} - x^*}^2 - \frac{1}{2\gamma}\norm{x^{t+1} - x^t}^2 \\
       &\quad + \frac{L}{2}\norm{x^{t+1} - w^t}^2 - \frac{\mu}{2} \norm{w^t - x^*}^2.
   \end{align*}
   Next, by \eqref{eq:young_2}, we have 
   \begin{align*}
    \frac{L}{2}\norm{x^{t+1} - w^t}^2 \leq L\norm{x^{t+1} - x^t}^2 + L\norm{w^t - x^t}^2
   \end{align*}
   and
   \begin{align*}
    \frac{\mu}{4} \norm{x^t - x^*}^2 \leq \frac{\mu}{2} \norm{w^t - x^*}^2 + \frac{\mu}{2} \norm{w^t - x^t}^2 \leq \frac{\mu}{2} \norm{w^t - x^*}^2 + L \norm{w^t - x^t}^2,
   \end{align*}
   where we used $L \geq \mu.$ Thus
   \begin{align*}
    f(x^{t+1}) &\leq f(x^*) + \inp{\nabla f(w^t) - g^t}{x^{t+1} - x^*} \\
    &\qquad + \frac{1}{2\gamma}\norm{x^t - x^*}^2 - \frac{1}{2\gamma}\norm{x^{t+1} - x^*}^2 - \frac{1}{2\gamma}\norm{x^{t+1} - x^t}^2  \\
    &\qquad+ L\norm{x^{t+1} - x^t}^2 + L\norm{w^t - x^t}^2 - \frac{\mu}{4} \norm{x^t - x^*}^2 + L \norm{w^t - x^t}^2 \\
    &= f(x^*) + \inp{\nabla f(w^t) - g^t}{x^{t+1} - x^*}
    + \frac{1}{2\gamma}\left(1 - \frac{\gamma \mu}{2}\right)\norm{x^t - x^*}^2 \\
    &\qquad - \frac{1}{2\gamma}\norm{x^{t+1} - x^*}^2 - \left(\frac{1}{2\gamma} - L\right)\norm{x^{t+1} - x^t}^2
    + 2 L \norm{w^t - x^t}^2 \\
    &\leq f(x^*) + \inp{\nabla f(w^t) - g^t}{x^{t+1} - x^*}
    + \frac{1}{2\gamma}\left(1 - \frac{\gamma \mu}{2}\right)\norm{x^t - x^*}^2 \\
    &\qquad - \frac{1}{2\gamma}\norm{x^{t+1} - x^*}^2 + 2 L \norm{w^t - x^t}^2,
    \end{align*}
    where we used the fact that $\gamma \leq \frac{1}{2L}.$
    Then, taking expectation conditioned on previous iterations $\{0, \dots, t\}$, we obtain
    \begin{align*}
       \ExpSub{t+1}{f(x^{t+1})} &\leq f(x^*) + \ExpSub{t+1}{\inp{\nabla f(w^t) - g^t}{x^{t+1} - x^*}} \\
       &\qquad + \frac{1}{2\gamma}\left(1 - \frac{\gamma \mu}{2}\right)\norm{x^t - x^*}^2 - \frac{1}{2\gamma}\ExpSub{t+1}{\norm{x^{t+1} - x^*}^2} + 2 L \norm{w^t - x^t}^2.
    \end{align*}
    From the unbiasedness of the compressors $\cC_i^{D}, $ we have
    \begin{align*}
       \ExpSub{t+1}{g^t} = \nabla f(w^t)
    \end{align*}
    and
    \begin{eqnarray*}
       \ExpSub{t+1}{\inp{\nabla f(w^t) - g^t}{x^{t+1} - x^*}}
       &=& \ExpSub{t+1}{\inp{\nabla f(w^t) - g^t}{x^{t} - \gamma g^t- x^*}} \\
       &=& -\gamma\ExpSub{t+1}{\inp{\nabla f(w^t) - g^t}{g^t}} \\
       &= &\gamma\ExpSub{t+1}{\norm{g^t}^2} - \gamma \norm{\nabla f(w^t)}^2 \\
       &\overset{\eqref{eq:vardecomp}}{=}& \gamma\ExpSub{t+1}{\norm{g^t - \nabla f(w^t)}^2}.
    \end{eqnarray*}
    Therefore
    \begin{align}
       \ExpSub{t+1}{f(x^{t+1})} &\leq f(x^*) + \gamma\ExpSub{t+1}{\norm{g^t - \nabla f(w^t)}^2} \nonumber\\
       &\quad + \frac{1}{2\gamma}\left(1 - \frac{\gamma \mu}{2}\right)\norm{x^t - x^*}^2 - \frac{1}{2\gamma}\ExpSub{t+1}{\norm{x^{t+1} - x^*}^2} + 2 L \norm{w^t - x^t}^2 \label{eq:f_x_t}.
    \end{align}
    Now, we separately consider $\ExpSub{t+1}{\norm{g^t - \nabla f(w^t)}^2}.$ 
    From the independence of compressors, we have
    \begin{align*}
       &\ExpSub{t+1}{\norm{g^t - \nabla f(w^t)}^2}\\
       &=\ExpSub{t+1}{\norm{h^t + \frac{1}{n}\sum_{i=1}^n \cC_i^{D}(\nabla f_i(w^t) - h^t_i) - \nabla f(w^t)}^2} \\
       &=\frac{1}{n^2}\sum_{i=1}^n\ExpSub{t+1}{\norm{\cC_i^{D}(\nabla f_i(w^t) - h^t_i) - \left(\nabla f_i(w^t) - h^t_i\right)}^2} \\
       &\leq\frac{\omega}{n^2}\sum_{i=1}^n \norm{\nabla f_i(w^t) - h^t_i}^2 \\
       &\leq \frac{2 \omega}{n^2}\sum_{i=1}^n \norm{h^t_i - \nabla f_i(x^*)}^2 + \frac{2\omega}{n^2}\sum_{i=1}^n \norm{\nabla f_i(w^t) - \nabla f_i(x^*)}^2 \\
       &\leq \frac{2 \omega}{n^2}\sum_{i=1}^n \norm{h^t_i - \nabla f_i(x^*)}^2 + \frac{4\omega}{n^2}\sum_{i=1}^n \norm{\nabla f_i(w^t) - \nabla f_i(x^t)}^2 + \frac{4\omega}{n^2}\sum_{i=1}^n \norm{\nabla f_i(x^t) - \nabla f_i(x^*)}^2,
   \end{align*}
   where in the last three inequalities, we used \eqref{eq:compressor} and \eqref{eq:young_2}. Next, using Assumption~\ref{ass:workers_lipschitz_constant} and Lemma~\ref{lemma:lipt_func}, we obtain
   \begin{align}
    &\ExpSub{t+1}{\norm{g^t - \nabla f(w^t)}^2} \nonumber\\
    &\leq\frac{2 \omega}{n^2}\sum_{i=1}^n \norm{h^t_i - \nabla f_i(x^*)}^2 + \frac{4\omega \widehat{L}^2}{n} \norm{w^t - x^t}^2 + \frac{8\omega L_{\max}}{n} \left(f(x^t) - f(x^*)\right)\label{eq:g_t}.
   \end{align}
   To construct a Lyapunov function, it remains to bound $\frac{1}{n}\sum_{i=1}^n \norm{h^{t+1}_i - \nabla f_i(x^*)}^2$ and $\norm{w^{t+1} - z^{t+1}}^2$:
 \begin{align*}
    &\frac{1}{n}\sum_{i=1}^n \ExpSub{t+1}{\norm{h^{t+1}_i - \nabla f_i(x^*)}^2}\\
    &\qquad=\frac{1}{n}\sum_{i=1}^n \ExpSub{t+1}{\norm{h^{t}_i + \beta \cC_i^{D}(\nabla f_i(w^t) - h^t_i) - \nabla f_i(x^*)}^2}\\
    &\qquad=\frac{1}{n}\sum_{i=1}^n \norm{h^{t}_i - \nabla f_i(x^*)}^2 + \frac{2\beta}{n}\sum_{i=1}^n \inp{h^{t}_i - \nabla f_i(x^*)}{\ExpSub{t+1}{\cC_i^{D}(\nabla f_i(w^t) - h^t_i)}} \\
    &\qquad\qquad + \frac{\beta^2}{n}\sum_{i=1}^n \ExpSub{t+1}{\norm{\cC_i^{D}(\nabla f_i(w^t) - h^t_i)}^2}\\
    &\qquad\overset{\eqref{eq:compressor}}{\leq}\frac{1}{n}\sum_{i=1}^n \norm{h^{t}_i - \nabla f_i(x^*)}^2 + \frac{2\beta}{n}\sum_{i=1}^n \inp{h^{t}_i - \nabla f_i(x^*)}{\nabla f_i(w^t) - h^t_i} \\
    &\qquad\qquad + \frac{\beta^2(\omega + 1)}{n}\sum_{i=1}^n \norm{\nabla f_i(w^t) - h^t_i}^2\\
    &\qquad\overset{\eqref{eq:inp}}{=}\left(1 - \beta\right)\frac{1}{n}\sum_{i=1}^n \norm{h^{t}_i - \nabla f_i(x^*)}^2 + \frac{\beta}{n}\sum_{i=1}^n \norm{\nabla f_i(w^t) - \nabla f_i(x^*)}^2 \\
    &\qquad\qquad + \frac{\beta \left(\beta(\omega + 1) - 1\right)}{n}\sum_{i=1}^n \norm{\nabla f_i(w^t) - h^t_i}^2 \\
    &\qquad\leq \left(1 - \beta\right)\frac{1}{n}\sum_{i=1}^n \norm{h^{t}_i - \nabla f_i(x^*)}^2 + \frac{\beta}{n}\sum_{i=1}^n \norm{\nabla f_i(w^t) - \nabla f_i(x^*)}^2,
 \end{align*}
 where we use that $\beta \in \left[0, \frac{1}{\omega + 1}\right].$
 Thus, using \eqref{eq:young_2}, Assumption~\ref{ass:workers_lipschitz_constant} and Lemma~\ref{lemma:lipt_func}, we have
 \begin{align}
    &\frac{1}{n}\sum_{i=1}^n \ExpSub{t+1}{\norm{h^{t+1}_i - \nabla f_i(x^*)}^2} \nonumber\\
    &\qquad\leq \left(1 - \beta\right)\frac{1}{n}\sum_{i=1}^n \norm{h^{t}_i - \nabla f_i(x^*)}^2 + 2\beta \widehat{L}^2 \norm{w^t - x^t}^2 + 4\beta L_{\max} \left(f(x^t) - f(x^*)\right) \label{eq:h_t}.
 \end{align}
 
 It remains to bound $\ExpSub{t+1}{\norm{w^{t+1} - x^{t+1}}^2}:$
 \begin{eqnarray*}
    \ExpSub{t+1}{\norm{w^{t+1}-x^{t+1}}^2} &=& \ExpSub{t+1}{\norm{w^t + \mathcal{C}^p(x^{t+1} - w^t) - x^{t+1}}^2} \\
    & \overset{\eqref{eq:biased_compressor}}{\leq} &(1-\alpha) \ExpSub{t+1}{\norm{x^{t+1} - w^t}^2} \\
    & = &(1-\alpha) \ExpSub{t+1}{\norm{x^{t} - \gamma g^t - w^t}^2} \\
    & \overset{\eqref{eq:vardecomp}}{=} &(1-\alpha) \gamma^2 \ExpSub{t+1}{\norm{g^t - \nabla f(w^{t})}^2} + (1-\alpha) \norm{x^t - \gamma \nabla f(w^{t}) - w^t}^2 \\
    & \overset{\eqref{eq:young}}{\leq}& \gamma^2 \ExpSub{t+1}{\norm{g^t - \nabla f(w^{t})}^2} + \left(1-\frac{\alpha}{2}\right) \norm{w^t - x^t}^2 + \frac{2 \gamma^2}{\alpha} \norm{\nabla f(w^{t})}^2 \\
    & \overset{\eqref{eq:young_2}}{\leq}& \gamma^2 \ExpSub{t+1}{\norm{g^t - \nabla f(w^{t})}^2}
    + \left(1-\frac{\alpha}{2}\right) \norm{w^t - x^t}^2 \\
    &&\qquad + \frac{4 \gamma^2}{\alpha} \norm{\nabla f(w^{t}) - \nabla f(x^{t})}^2 + \frac{4 \gamma^2}{\alpha} \norm{\nabla f(x^{t}) - \nabla f(x^{*})}^2.
 \end{eqnarray*}
 Using Assumption~\ref{ass:lipschitz_constant} and Lemma~\ref{lemma:lipt_func}, we obtain
 \begin{align*}
    &\ExpSub{t+1}{\norm{w^{t+1}-x^{t+1}}^2} \leq \gamma^2 \ExpSub{t+1}{\norm{g^t - \nabla f(w^{t})}^2} \\
    &\qquad + \left(1-\frac{\alpha}{2} + \frac{4 \gamma^2 L^2}{\alpha}\right) \norm{w^t - x^t}^2 + \frac{8 \gamma^2 L}{\alpha} \left(f(x^{t}) - f(x^{*})\right) \\
    &\overset{\eqref{eq:g_t}}{\leq} \gamma^2 \left(\frac{2 \omega}{n^2}\sum_{i=1}^n \norm{h^t_i - \nabla f_i(x^*)}^2 + \frac{4\omega \widehat{L}^2}{n} \norm{w^t - x^t}^2 + \frac{8\omega L_{\max}}{n} \left(f(x^t) - f(x^*)\right)\right) \\
    &\qquad + \left(1-\frac{\alpha}{2} + \frac{4 \gamma^2 L^2}{\alpha}\right) \norm{w^t - x^t}^2 + \frac{8 \gamma^2 L}{\alpha} \left(f(x^{t}) - f(x^{*})\right) \\
    &= \left(1-\frac{\alpha}{2} + \frac{4 \gamma^2 L^2}{\alpha} + \frac{4\gamma^2\omega \widehat{L}^2}{n}\right) \norm{w^t - x^t}^2
    + \frac{2 \gamma^2 \omega}{n^2}\sum_{i=1}^n \norm{h^t_i - \nabla f_i(x^*)}^2 \\
    &\qquad + \left(\frac{8\gamma^2\omega L_{\max}}{n} + \frac{8 \gamma^2 L}{\alpha}\right) \left(f(x^{t}) - f(x^{*})\right) \\
    &\leq \left(1-\frac{\alpha}{4}\right) \norm{w^t - x^t}^2 + \frac{2 \gamma^2 \omega}{n^2}\sum_{i=1}^n \norm{h^t_i - \nabla f_i(x^*)}^2 \\
    &\qquad+ \left(\frac{8\gamma^2\omega L_{\max}}{n} + \frac{8 \gamma^2 L}{\alpha}\right) \left(f(x^{t}) - f(x^{*})\right),
 \end{align*}
 where we assume that $\gamma \leq \frac{\alpha}{\sqrt{32} L}$ and $\gamma \leq \frac{\sqrt{\alpha n}}{\sqrt{32 \omega}\widehat{L}}.$
 
 Let us fix some constants $\kappa \geq 0$ and $\nu \geq 0.$ We now combine the above inequality with \eqref{eq:f_x_t}, \eqref{eq:g_t} and \eqref{eq:h_t} to obtain
 \begin{align*}
    &\ExpSub{t+1}{f(x^{t+1})} + \kappa \frac{1}{n}\sum_{i=1}^n \ExpSub{t+1}{\norm{h^{t+1}_i - \nabla f_i(x^*)}^2} + \nu \ExpSub{t+1}{\norm{w^{t+1}-x^{t+1}}^2} \\
    &\leq f(x^*) + \gamma \left(\frac{2 \omega}{n^2}\sum_{i=1}^n \norm{h^t_i - \nabla f_i(x^*)}^2 + \frac{4\omega \widehat{L}^2}{n} \norm{w^t - x^t}^2 + \frac{8\omega L_{\max}}{n} \left(f(x^t) - f(x^*)\right)\right)\\
       &\quad + \frac{1}{2\gamma}\left(1 - \frac{\gamma \mu}{2}\right)\norm{x^t - x^*}^2 - \frac{1}{2\gamma}\ExpSub{t+1}{\norm{x^{t+1} - x^*}^2} + 2 L \norm{w^t - x^t}^2 \\
       &\quad + \kappa \left(\left(1 - \beta\right)\frac{1}{n}\sum_{i=1}^n \norm{h^{t}_i - \nabla f_i(x^*)}^2 + 2\beta \widehat{L}^2 \norm{w^t - x^t}^2 + 4\beta L_{\max} \left(f(x^t) - f(x^*)\right)\right) \\
       &\quad + \nu \left(\left(1-\frac{\alpha}{4}\right) \norm{w^t - x^t}^2 + \frac{2 \gamma^2 \omega}{n^2}\sum_{i=1}^n \norm{h^t_i - \nabla f_i(x^*)}^2 + \left(\frac{8\gamma^2\omega L_{\max}}{n} + \frac{8 \gamma^2 L}{\alpha}\right) \left(f(x^{t}) - f(x^{*})\right)\right).
 \end{align*}
 Rearranging the last inequality, one can get
 \begin{align}
    &\frac{1}{2\gamma}\ExpSub{t+1}{\norm{x^{t+1} - x^*}^2}
    + \ExpSub{t+1}{f(x^{t+1}) - f(x^*)}\nonumber \\
    &\qquad+ \kappa \frac{1}{n}\sum_{i=1}^n \ExpSub{t+1}{\norm{h^{t+1}_i - \nabla f_i(x^*)}^2}
    + \nu \ExpSub{t+1}{\norm{w^{t+1}-x^{t+1}}^2}\nonumber \\
    & \leq \frac{1}{2\gamma}\left(1 - \frac{\gamma \mu}{2}\right)\norm{x^t - x^*}^2 \nonumber \\
    &\qquad + \left(\frac{8\gamma\omega L_{\max}}{n} + \kappa 4\beta L_{\max} + \nu \left(\frac{8\gamma^2\omega L_{\max}}{n} + \frac{8 \gamma^2 L}{\alpha}\right)\right)\left(f(x^{t}) - f(x^{*})\right) \nonumber\\
    &\qquad + \left(\frac{2 \gamma \omega}{n} + \nu \frac{2 \gamma^2 \omega}{n} + \kappa \left(1 - \beta\right)\right)\frac{1}{n}\sum_{i=1}^n \norm{h^t_i - \nabla f_i(x^*)}^2 \nonumber\\
    &\qquad + \left(\frac{4\gamma\omega \widehat{L}^2}{n} + 2L + \kappa 2\beta \widehat{L}^2 + \nu \left(1-\frac{\alpha}{4}\right)\right)\norm{w^t - x^t}^2 \label{eq:kappa_nu}.
 \end{align}
 Our final goal is to find $\kappa$ and $\nu$ such that
 $$\frac{2 \gamma \omega}{n} + \nu \frac{2 \gamma^2 \omega}{n} + \kappa \left(1 - \beta\right) = \kappa \left(1 - \frac{\beta}{2}\right)$$
 and 
 $$\frac{4\gamma\omega \widehat{L}^2}{n} + 2L + \kappa 2\beta \widehat{L}^2 + \nu \left(1-\frac{\alpha}{4}\right) \leq \nu \left(1-\frac{\alpha}{8}\right).$$
 The last inequality is equivalent to
 \begin{align}\frac{32\gamma\omega \widehat{L}^2}{n \alpha} + \frac{16L}{\alpha} + \kappa \frac{16\beta \widehat{L}^2}{\alpha} \leq \nu.
 \label{eq:nu}
 \end{align}
 From the first equality we get
 $\kappa = \frac{4 \gamma \omega}{n \beta} + \nu \frac{4 \gamma^2 \omega}{n \beta}.$ Thus
 \begin{align*}
 &\frac{32\gamma\omega \widehat{L}^2}{n \alpha} + \frac{16L}{\alpha} + \kappa \frac{16\beta \widehat{L}^2}{\alpha} = \frac{32\gamma\omega \widehat{L}^2}{n \alpha} + \frac{16L}{\alpha} + \left(\frac{4 \gamma \omega}{n \beta} + \nu \frac{4 \gamma^2 \omega}{n \beta}\right) \frac{16\beta \widehat{L}^2}{\alpha} \\
 &\qquad=\frac{96\gamma\omega \widehat{L}^2}{n \alpha} + \frac{16L}{\alpha} + \nu \frac{64 \gamma^2 \omega \widehat{L}^2}{n \alpha} \leq \frac{96\gamma\omega \widehat{L}^2}{n \alpha} + \frac{16L}{\alpha} + \nu \frac{1}{2},
 \end{align*}
 where we used that $\gamma \leq \frac{\sqrt{n \alpha}}{\sqrt{128 \omega} \widehat{L}}.$ It means that we can take $\nu = \frac{192\gamma\omega \widehat{L}^2}{n \alpha} + \frac{32L}{\alpha}$ to ensure that \eqref{eq:nu} holds. Thus
 $$\kappa = \frac{4 \gamma \omega}{n \beta} + \left(\frac{192\gamma\omega \widehat{L}^2}{n \alpha} + \frac{32L}{\alpha}\right) \frac{4 \gamma^2 \omega}{n \beta} = \frac{4 \gamma \omega}{n \beta} + \frac{768\gamma^3\omega^2 \widehat{L}^2}{n^2 \alpha \beta} + \frac{128 \gamma^2 \omega L}{n \beta \alpha}.$$
 Let us now substitute these values of $\kappa$ and $\nu$ in inequality \eqref{eq:kappa_nu}:
 \begin{align*}
    &\frac{1}{2\gamma}\ExpSub{t+1}{\norm{x^{t+1} - x^*}^2} + \ExpSub{t+1}{f(x^{t+1}) - f(x^*)} \\
    &\qquad+ \kappa \frac{1}{n}\sum_{i=1}^n \ExpSub{t+1}{\norm{h^{t+1}_i - \nabla f_i(x^*)}^2} + \nu \ExpSub{t+1}{\norm{w^{t+1}-x^{t+1}}^2} \nonumber \\
    &\leq \frac{1}{2\gamma}\left(1 - \frac{\gamma \mu}{2}\right)\norm{x^t - x^*}^2 + \kappa \left(1 - \frac{\beta}{2}\right)\frac{1}{n}\sum_{i=1}^n \norm{h^t_i - \nabla f_i(x^*)}^2 + \nu \left(1-\frac{\alpha}{8}\right)\norm{w^t - x^t}^2\\
    &\quad + \left(\frac{8\gamma\omega L_{\max}}{n} + \left(\frac{4 \gamma \omega}{n \beta} + \frac{768\gamma^3\omega^2 \widehat{L}^2}{n^2 \alpha \beta} + \frac{128 \gamma^2 \omega L}{n \beta \alpha}\right) 4\beta L_{\max} \right.\\
    &\quad\quad\left.+\left(\frac{192\gamma\omega \widehat{L}^2}{n \alpha} + \frac{32L}{\alpha}\right) \left(\frac{8\gamma^2\omega L_{\max}}{n} + \frac{8 \gamma^2 L}{\alpha}\right)\right)\left(f(x^{t}) - f(x^{*})\right) \\
    &= \frac{1}{2\gamma}\left(1 - \frac{\gamma \mu}{2}\right)\norm{x^t - x^*}^2 + \kappa \left(1 - \frac{\beta}{2}\right)\frac{1}{n}\sum_{i=1}^n \norm{h^t_i - \nabla f_i(x^*)}^2 + \nu \left(1-\frac{\alpha}{8}\right)\norm{w^t - x^t}^2\\
    &\quad + \left(\frac{24\gamma\omega L_{\max}}{n} + \frac{4608\gamma^3\omega^2 \widehat{L}^2 L_{\max}}{n^2 \alpha} + \frac{768 \gamma^2 \omega L L_{\max}}{n \alpha} + \frac{1536\gamma^3\omega L \widehat{L}^2}{n \alpha^2} + \frac{256 \gamma^2 L^2}{\alpha^2}\right)\left(f(x^{t}) - f(x^{*})\right).
 \end{align*}
 Using the assumptions on $\gamma,$ we have
 \begin{align*}
    &\frac{24\gamma\omega L_{\max}}{n} \leq \frac{1}{10}, \\
    &\frac{4608\gamma^3\omega^2 \widehat{L}^2 L_{\max}}{n^2 \alpha} \leq \frac{20 \gamma^2\omega \widehat{L}^2}{n \alpha} \leq \frac{1}{10}, \\
    &\frac{768 \gamma^2 \omega L L_{\max}}{n \alpha} \leq \frac{4 \gamma L}{\alpha} \leq \frac{1}{10}, \\
    &\frac{1536\gamma^3\omega L \widehat{L}^2}{n \alpha^2} \leq \frac{40\gamma^2\omega \widehat{L}^2}{n \alpha} \leq \frac{1}{10}, \\
    &\frac{256 \gamma^2 L^2}{\alpha^2} \leq \frac{1}{10}.
 \end{align*}
 Finally, considering $\gamma \leq \frac{\beta}{\mu}$ and $\gamma \leq \frac{\alpha}{4\mu}$ gives
 \begin{align*}
    &\frac{1}{2\gamma}\ExpSub{t+1}{\norm{x^{t+1} - x^*}^2} + \ExpSub{t+1}{f(x^{t+1}) - f(x^*)} \\
    &\qquad+ \kappa \frac{1}{n}\sum_{i=1}^n \ExpSub{t+1}{\norm{h^{t+1}_i - \nabla f_i(x^*)}^2} + \nu \ExpSub{t+1}{\norm{w^{t+1}-x^{t+1}}^2} \nonumber \\
    &\leq \frac{1}{2\gamma}\left(1 - \frac{\gamma \mu}{2}\right)\norm{x^t - x^*}^2 + \kappa \left(1 - \frac{\gamma \mu}{2}\right)\frac{1}{n}\sum_{i=1}^n \norm{h^t_i - \nabla f_i(x^*)}^2\\
    &\qquad + \nu \left(1 - \frac{\gamma \mu}{2}\right)\norm{w^t - x^t}^2
    + \frac{1}{2}\left(f(x^{t}) - f(x^{*})\right).
 \end{align*}
 Note that $\kappa = \frac{4 \gamma \omega}{n \beta} + \frac{768\gamma^3\omega^2 \widehat{L}^2}{n^2 \alpha \beta} + \frac{128 \gamma^2 \omega L}{n \beta \alpha} \leq \frac{8 \gamma \omega}{n \beta}.$

 In the proof, we have the requirement that
 \begin{align}
    &\gamma \leq \min\left\{\frac{n}{160 \omega L_{\max}}, \frac{\sqrt{n \alpha}}{20\sqrt{\omega} \widehat{L}}, \frac{\alpha}{100 L}, \frac{\beta}{\mu}\right\}.
    \label{eq:aux_gamma_req}
 \end{align}
 Let us simplify it. Using Lemma~\ref{lemma:lipt_constants}, we have
 \begin{align*}
    \frac{20\sqrt{\omega} \widehat{L}}{\sqrt{n \alpha}} \leq \frac{20\sqrt{\omega} \sqrt{L_{\max} L}}{\sqrt{n \alpha}} \leq \frac{50 \omega L_{\max}}{n} + \frac{2 L}{\alpha} \leq \max\left\{\frac{100 \omega L_{\max}}{n},\frac{4 L}{\alpha}\right\}
 \end{align*}
 Using the last inequality, we can simplify \eqref{eq:aux_gamma_req} to
 \begin{align*}
    &\gamma \leq \min\left\{\frac{n}{160 \omega L_{\max}}, \frac{\alpha}{100 L}, \frac{\beta}{\mu}\right\}.
 \end{align*}
 \end{proof}

We now prove a theorem for the general convex case:
\begin{restatable}{theorem}{THEOREMDIANAGENERALCONVEX}
    \label{theorem:diana_general_convex}
    Let us assume that Assumptions~\ref{ass:lipschitz_constant}, \ref{ass:workers_lipschitz_constant} and \ref{ass:convex} hold, the strong convexity parameter satisfies $\mu = 0,$ $\beta = \frac{1}{\omega + 1}$, $x^0 = w^0$ and
    \begin{align*}
       &\gamma \leq \min\left\{\frac{n}{160 \omega L_{\max}}, \frac{\alpha}{100 L}\right\}.
    \end{align*}
    Then Algorithm~\ref{algorithm:diana_ef21_p} guarantees a convergence rate
    \begin{align}
       f\left(\frac{1}{T} \sum_{t=1}^{T} x^{t}\right) - f(x^*) \leq \frac{1}{\gamma T}\norm{x^0 - x^*}^2 + \frac{f(x^{0}) - \nabla f(x^{*})}{T} + \frac{16 \gamma \omega (\omega + 1)}{T n^2} \sum_{i=1}^n \norm{h^0_i - \nabla f_i(x^*)}^2.
    \end{align}
\end{restatable}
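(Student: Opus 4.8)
The plan is to invoke the auxiliary one-step estimate of Theorem~\ref{theorem:general_diana} with $\mu = 0$ and $\beta = \frac{1}{\omega+1}$, and then, because setting $\mu=0$ destroys the geometric contraction, to telescope the resulting recursion while exploiting the factor $\frac{1}{2}$ sitting in front of the function-value term on the right-hand side. First I would observe that under $\mu=0$ the stepsize condition \eqref{eq:gamma:diana} loses its last entry $\beta/\mu$ (which becomes $+\infty$) and reduces \emph{exactly} to the hypothesis of the present theorem, so Theorem~\ref{theorem:general_diana} applies verbatim. Writing $\Phi^t \eqdef \frac{1}{2\gamma}\Exp{\norm{x^t-x^*}^2} + \kappa\frac{1}{n}\sum_{i=1}^n\Exp{\norm{h_i^t-\nabla f_i(x^*)}^2} + \nu\Exp{\norm{w^t-x^t}^2}$ and $F^t \eqdef \Exp{f(x^t)-f(x^*)}$, all contraction factors $(1-\frac{\gamma\mu}{2})$ in \eqref{eq:main_diana} become $1$, so the inequality collapses to the clean recursion $\Phi^{t+1} + F^{t+1} \le \Phi^t + \tfrac{1}{2}F^t$.

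Next I would sum this over $t=0,\dots,T-1$. The $\Phi$-terms telescope, leaving $\Phi^T - \Phi^0 + \sum_{t=1}^T F^t \le \tfrac{1}{2}\sum_{t=0}^{T-1}F^t$. The two partial sums of $F^t$ overlap on the indices $1,\dots,T-1$; rearranging them yields $\Phi^T + F^T + \tfrac12\sum_{t=1}^{T-1}F^t \le \Phi^0 + \tfrac12 F^0$. Dropping the nonnegative term $\Phi^T$ and using $F^T \ge \tfrac12 F^T$ (every $F^t\ge 0$ by optimality of $x^*$ and convexity of $f$) gives $\tfrac12\sum_{t=1}^{T}F^t \le \Phi^0 + \tfrac12 F^0$, i.e.\ $\sum_{t=1}^T F^t \le 2\Phi^0 + F^0$.

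Then I would apply Jensen's inequality to the convex $f$, obtaining $f\!\left(\tfrac1T\sum_{t=1}^T x^t\right) - f(x^*) \le \tfrac1T\sum_{t=1}^T F^t \le \tfrac1T\left(2\Phi^0 + F^0\right)$. Finally I would substitute the initial values: since $x^0=w^0$ the model-shift term $\nu\Exp{\norm{w^0-x^0}^2}$ in $\Phi^0$ vanishes, so $2\Phi^0 + F^0 = \tfrac1\gamma\norm{x^0-x^*}^2 + 2\kappa\tfrac1n\sum_{i=1}^n\norm{h_i^0-\nabla f_i(x^*)}^2 + \left(f(x^0)-f(x^*)\right)$, and inserting the bound $\kappa \le \frac{8\gamma\omega}{n\beta} = \frac{8\gamma\omega(\omega+1)}{n}$ supplied by Theorem~\ref{theorem:general_diana} turns the shift term into $\frac{16\gamma\omega(\omega+1)}{n^2}\sum_{i=1}^n\norm{h_i^0-\nabla f_i(x^*)}^2$, reproducing the claimed rate.

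The only genuinely nonroutine point is the telescoping step: with $\mu=0$ there is no contraction, so the argument must convert the mismatch between the coefficient $1$ of $F^{t+1}$ on the left and $\tfrac12$ of $F^t$ on the right into a bound on the running \emph{average} of the iterates rather than on the last iterate. This is exactly what forces the $\mathcal{O}(1/T)$ (rather than linear) rate and the appearance of $\tfrac1T\sum_{t=1}^T x^t$ through Jensen; everything else is elementary bookkeeping and substitution of the initial Lyapunov values.
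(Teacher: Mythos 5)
Your proposal is correct and follows essentially the same route as the paper's own proof: invoke the one-step Lyapunov estimate of Theorem~\ref{theorem:general_diana} (whose stepsize condition indeed loses the $\nicefrac{\beta}{\mu}$ entry when $\mu=0$), telescope the resulting recursion over $t=0,\dots,T-1$, apply Jensen's inequality to the averaged iterate, and substitute $x^0=w^0$ together with the bound $\kappa \leq \nicefrac{8\gamma\omega}{n\beta} = \nicefrac{8\gamma\omega(\omega+1)}{n}$. Your handling of the overlapping sums of $\Exp{f(x^t)-f(x^*)}$ matches the paper's computation exactly, so there is nothing to correct.
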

 
 \begin{proof}
    Under our assumptions Theorem~\ref{theorem:general_diana} holds. Let us bound \eqref{eq:main_diana}:
    \begin{align*}
       &\frac{1}{2\gamma}\Exp{\norm{x^{t+1} - x^*}^2} + \Exp{f(x^{t+1}) - f(x^*)} \\
       &\qquad + \kappa \frac{1}{n}\sum_{i=1}^n \Exp{\norm{h^{t+1}_i - \nabla f_i(x^*)}^2} + \nu \Exp{\norm{w^{t+1}-x^{t+1}}^2} \nonumber\\
       &\leq \frac{1}{2\gamma}\left(1 - \frac{\gamma \mu}{2}\right)\Exp{\norm{x^t - x^*}^2} + \frac{1}{2}\Exp{f(x^{t}) - f(x^{*})} \nonumber\\
       &\qquad + \kappa \left(1 - \frac{\gamma \mu}{2}\right)\frac{1}{n}\sum_{i=1}^n \Exp{\norm{h^t_i - \nabla f_i(x^*)}^2} + \nu \left(1 - \frac{\gamma \mu}{2}\right)\Exp{\norm{w^t - x^t}^2} \\
       &\leq \frac{1}{2\gamma}\Exp{\norm{x^t - x^*}^2} + \frac{1}{2}\Exp{f(x^{t}) - f(x^{*})} + \kappa \frac{1}{n}\sum_{i=1}^n \Exp{\norm{h^t_i - \nabla f_i(x^*)}^2} + \nu \Exp{\norm{w^t - x^t}^2}.
    \end{align*}
    We now sum the inequality for $t \in \{0, \dots, T-1\}$ and obtain
    \begin{align*}
       &\frac{1}{2\gamma}\Exp{\norm{x^{T} - x^*}^2} + \frac{1}{2}\Exp{f(x^{T}) - f(x^*)} + \frac{1}{2} \sum_{t=1}^{T}\Exp{f(x^{t}) - f(x^*)} \\
       &\qquad + \kappa \frac{1}{n}\sum_{i=1}^n \Exp{\norm{h^{T}_i - \nabla f_i(x^*)}^2} + \nu \Exp{\norm{w^{T}-x^{T}}^2} \nonumber\\
       &\leq \frac{1}{2\gamma}\norm{x^0 - x^*}^2 + \frac{1}{2}\left(f(x^{0}) - f(x^{*})\right) + \kappa \frac{1}{n}\sum_{i=1}^n \norm{h^0_i - \nabla f_i(x^*)}^2 + \nu \norm{w^0 - x^0}^2 \\
       &\leq \frac{1}{2\gamma}\norm{x^0 - x^*}^2 + \frac{1}{2}\left(f(x^{0}) - f(x^{*})\right) + \frac{8 \gamma \omega}{n^2 \beta} \sum_{i=1}^n \norm{h^0_i - \nabla f_i(x^*)}^2,
    \end{align*}
    where we used the assumption $x^0 = w^0$ and the bound on $\kappa.$
    Using nonnegativity of the terms and convexity, we then have 
    \begin{align*}
       f\left(\frac{1}{T} \sum_{t=1}^{T} x^{t}\right) - f(x^*) \leq \frac{1}{\gamma T}\norm{x^0 - x^*}^2 + \frac{f(x^{0}) - \nabla f(x^{*})}{T} + \frac{16 \gamma \omega}{T n^2 \beta} \sum_{i=1}^n \norm{h^0_i - \nabla f_i(x^*)}^2.
    \end{align*}
 \end{proof}

 We now prove a theorem for the strongly convex case:
 \THEOREMDIANASTRONGLY*
 
 \begin{proof}
    Under our assumptions Theorem~\ref{theorem:general_diana} holds. Using $\gamma \leq \frac{\alpha}{100 L} \leq \frac{1}{\mu},$ let us bound \eqref{eq:main_diana}:
    \begin{align*}
       &\frac{1}{2\gamma}\Exp{\norm{x^{t+1} - x^*}^2} + \Exp{f(x^{t+1}) - f(x^*)} \\
       &\qquad + \kappa \frac{1}{n}\sum_{i=1}^n \Exp{\norm{h^{t+1}_i - \nabla f_i(x^*)}^2} + \nu \Exp{\norm{w^{t+1}-x^{t+1}}^2} \nonumber\\
       &\leq \frac{1}{2\gamma}\left(1 - \frac{\gamma \mu}{2}\right)\Exp{\norm{x^t - x^*}^2} + \frac{1}{2}\Exp{f(x^{t}) - f(x^{*})} \nonumber\\
       &\qquad + \kappa \left(1 - \frac{\gamma \mu}{2}\right)\frac{1}{n}\sum_{i=1}^n \Exp{\norm{h^t_i - \nabla f_i(x^*)}^2} + \nu \left(1 - \frac{\gamma \mu}{2}\right)\Exp{\norm{w^t - x^t}^2} \\
       &\leq \frac{1}{2\gamma}\left(1 - \frac{\gamma \mu}{2}\right)\Exp{\norm{x^t - x^*}^2} + \left(1 - \frac{\gamma \mu}{2}\right)\Exp{f(x^{t}) - f(x^{*})} \nonumber\\
       &\qquad + \kappa \left(1 - \frac{\gamma \mu}{2}\right)\frac{1}{n}\sum_{i=1}^n \Exp{\norm{h^t_i - \nabla f_i(x^*)}^2} + \nu \left(1 - \frac{\gamma \mu}{2}\right)\Exp{\norm{w^t - x^t}^2} \\
       &= \left(1 - \frac{\gamma \mu}{2}\right)\left(\frac{1}{2\gamma}\Exp{\norm{x^t - x^*}^2} + \Exp{f(x^{t}) - f(x^{*})} + \kappa \frac{1}{n}\sum_{i=1}^n \Exp{\norm{h^t_i - \nabla f_i(x^*)}^2} + \nu \Exp{\norm{w^t - x^t}^2}\right).
    \end{align*}
    Recursively applying the last inequality and using $x^0 = w^0,$ one can get that
    \begin{align*}
       &\frac{1}{2\gamma}\Exp{\norm{x^{T} - x^*}^2} + \Exp{f(x^{T}) - f(x^*)} + \kappa \frac{1}{n}\sum_{i=1}^n \Exp{\norm{h^{T}_i - \nabla f_i(x^*)}^2} + \nu \Exp{\norm{w^{T}-x^{T}}^2} \\
       &\qquad\leq \left(1 - \frac{\gamma \mu}{2}\right)^T\left(\frac{1}{2\gamma}\Exp{\norm{x^0 - x^*}^2} + \left(f(x^{0}) - f(x^{*})\right) + \kappa \frac{1}{n}\sum_{i=1}^n \norm{h^0_i - \nabla f_i(x^*)}^2\right).
    \end{align*}
    Using the nonnegativity of the terms and the bound on $\kappa$, we obtain
    \begin{align*}
       &\frac{1}{2\gamma}\Exp{\norm{x^{T} - x^*}^2} + \Exp{f(x^{T}) - f(x^*)} \\
       &\qquad\leq \left(1 - \frac{\gamma \mu}{2}\right)^T\left(\frac{1}{2\gamma}\Exp{\norm{x^0 - x^*}^2} + \left(f(x^{0}) - f(x^{*})\right) + \frac{8 \gamma \omega}{n^2 \beta}\sum_{i=1}^n \norm{h^0_i - \nabla f_i(x^*)}^2\right).
    \end{align*}
 \end{proof}

\subsection{Communication Complexities in the General Convex Case}
\label{sec:comm_diana_general}
We now derive the communication complexities for the general convex case. From Theorem~\ref{theorem:diana_general_convex}, we know that \algname{EF21-P + DIANA} has the following convergence rate:
\begin{align*}
    f\left(\frac{1}{T} \sum_{t=1}^{T} x^{t}\right) - f(x^*) \leq \frac{1}{\gamma T}\norm{x^0 - x^*}^2 + \frac{f(x^{0}) - \nabla f(x^{*})}{T} + \frac{16 \gamma \omega (\omega + 1)}{T n^2} \sum_{i=1}^n \norm{h^0_i - \nabla f_i(x^*)}^2.
\end{align*}
Let us take $h^0_i = \nabla f_i(x^0)$ for all $i \in [n].$ Using Assumptions~\ref{ass:lipschitz_constant} and \ref{ass:workers_lipschitz_constant}, we have
\begin{align*}
    f\left(\frac{1}{T} \sum_{t=1}^{T} x^{t}\right) - f(x^*) &\leq \frac{1}{\gamma T}\norm{x^0 - x^*}^2 + \frac{L \norm{x^0 - x^*}^2}{2 T} \\
    &\qquad + \frac{16 \gamma \omega (\omega + 1)}{T n^2} \sum_{i=1}^n \norm{\nabla f_i(x^0) - \nabla f_i(x^*)}^2 \\
    &\leq \frac{1}{\gamma T}\norm{x^0 - x^*}^2 + \frac{L \norm{x^0 - x^*}^2}{2 T} + \frac{16 \gamma \omega (\omega + 1) \widehat{L}^2 \norm{x^0 - x^*}^2}{T n} \\
    &\leq \frac{1}{\gamma T}\norm{x^0 - x^*}^2 + \frac{L \norm{x^0 - x^*}^2}{2 T} + \frac{16 \gamma \omega (\omega + 1) L_{\max} L \norm{x^0 - x^*}^2}{T n}.
\end{align*}
In the last two inequalities, we use the definition of $\widehat{L}$ and Lemma~\ref{lemma:lipt_constants}.
Using the bound on $\gamma,$ we obtain that \algname{EF21-P + DIANA} returns an $\varepsilon$-solution after 
\begin{align*}
    &\cO\left(\frac{\omega L_{\max}}{n \varepsilon}+ \frac{L}{\alpha \varepsilon} + \frac{L}{\varepsilon} + \frac{\gamma \omega (\omega + 1) L_{\max} L}{n \varepsilon}\right)\\
    &=\cO\left(\frac{\omega L_{\max}}{n \varepsilon}+ \frac{L}{\alpha \varepsilon} + \frac{L}{\varepsilon} + \frac{(\omega + 1) L}{\varepsilon}\right)
\end{align*}
steps.
For simplicity, we assume that the server and the workers use Top$K$ and Rand$K$ compressors, respectively.
Thus the server-to-workers and the workers-to-server communication complexities equal
\begin{align*}
    &\cO\left(K \times \left(\frac{\omega L_{\max}}{n \varepsilon}+ \frac{L}{\alpha \varepsilon} + \frac{L}{\varepsilon} + \frac{(\omega + 1) L}{\varepsilon}\right)\right) \\
    &=\cO\left(\frac{d L_{\max}}{n \varepsilon} + \frac{d L}{\varepsilon} + \frac{K L}{\varepsilon} + \frac{d L}{\varepsilon}\right)\\
    &=\cO\left(\frac{d L_{\max}}{n \varepsilon} + \frac{d L}{\varepsilon}\right).
\end{align*}
Since $L_{\max} \leq n L,$ this complexity is no worse than the \algname{GD}'s complexity $\cO\left(\frac{d L}{\varepsilon}\right)$ for any $K \in [1, d].$

\subsection{Proofs for \algname{EF21-P + DIANA} with Stochastic Gradients}
First, we prove the following auxiliary theorem:
\begin{theorem}
    Let us consider Algorithm~\ref{algorithm:diana_ef21_p} using the stochastic gradients $\widetilde{\nabla} f_i$ instead of the exact gradients $\nabla f_i$ for all $i \in [n]$. Assume that Assumptions~\ref{ass:lipschitz_constant}, \ref{ass:workers_lipschitz_constant}, \ref{ass:convex} and \ref{ass:stochastic_unbiased_and_variance_bounded} hold, $\beta \in \left[0, \frac{1}{\omega + 1}\right],$ and 
    \begin{align*}
       &\gamma \leq \min\left\{\frac{n}{160 \omega L_{\max}}, \frac{\alpha}{100 L}, \frac{\beta}{\mu}\right\}.
    \end{align*}
    Then Algorithm~\ref{algorithm:diana_ef21_p} guarantees that
    \begin{align}
       &\frac{1}{2\gamma}\Exp{\norm{x^{t+1} - x^*}^2} + \Exp{f(x^{t+1}) - f(x^*)} \nonumber\\
       &\quad + \kappa \frac{1}{n}\sum_{i=1}^n \Exp{\norm{h^{t+1}_i - \nabla f_i(x^*)}^2} + \nu \Exp{\norm{w^{t+1}-x^{t+1}}^2} \nonumber\\
       &\leq \frac{1}{2\gamma}\left(1 - \frac{\gamma \mu}{2}\right)\Exp{\norm{x^t - x^*}^2} + \frac{1}{2}\Exp{f(x^{t}) - f(x^{*})} \nonumber\\
       &\quad + \kappa \left(1 - \frac{\gamma \mu}{2}\right)\frac{1}{n}\sum_{i=1}^n \Exp{\norm{h^t_i - \nabla f_i(x^*)}^2} + \nu \left(1 - \frac{\gamma \mu}{2}\right)\Exp{\norm{w^t - x^t}^2} + \frac{12 \gamma(\omega + 1) \sigma^2}{n}, \label{eq:main_diana_stochastic}
    \end{align}
    where $\kappa \leq \frac{8 \gamma \omega}{n \beta}$ and $\nu \leq \frac{192\gamma\omega \widehat{L}^2}{n \alpha} + \frac{32L}{\alpha}.$
\end{theorem}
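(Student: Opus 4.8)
The plan is to follow the proof of Theorem~\ref{theorem:general_diana} almost verbatim, carefully tracking the extra terms produced by the stochastic gradient noise. The skeleton is identical: I would establish the same four building blocks — a descent inequality for $\Exp{f(x^{t+1})}$ (the analogue of \eqref{eq:f_x_t}), a bound on the estimator variance $\ExpSub{t+1}{\norm{g^t - \nabla f(w^t)}^2}$ (the analogue of \eqref{eq:g_t}), a contraction for the shift error $\frac1n\sum_{i=1}^n\Exp{\norm{h_i^{t+1} - \nabla f_i(x^*)}^2}$ (the analogue of \eqref{eq:h_t}), and a contraction for the primal error $\Exp{\norm{w^{t+1} - x^{t+1}}^2}$ — and then combine them with the same weights $\kappa$ and $\nu$ and the same stepsize constraints.

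The key first observation is that the descent step \eqref{eq:f_x_t} survives \emph{with the same form}, because its derivation only uses $\Exp{g^t} = \nabla f(w^t)$, and this unbiasedness still holds: $\Exp{g^t} = h^t + \frac1n\sum_{i=1}^n \Exp{\cC_i^D(\widetilde{\nabla} f_i(w^t) - h_i^t)} = \frac1n\sum_{i=1}^n \nabla f_i(w^t) = \nabla f(w^t)$, by unbiasedness of both the compressors $\cC_i^D \in \mathbb{U}(\omega)$ and the stochastic gradients. Hence the noise enters only through the variance term. To bound that variance I would split $g^t - \nabla f(w^t)$ into a compression part $\frac1n\sum_{i=1}^n[\cC_i^D(\widetilde{\nabla} f_i(w^t)-h_i^t)-(\widetilde{\nabla} f_i(w^t)-h_i^t)]$ and a sampling part $\frac1n\sum_{i=1}^n[\widetilde{\nabla} f_i(w^t)-\nabla f_i(w^t)]$; conditioning \emph{first} on the stochastic gradients makes the cross term vanish (the compression part is then conditionally mean-zero), after which independence across workers together with \eqref{eq:compressor}, \eqref{eq:vardecomp}, and $\Exp{\norm{\widetilde{\nabla} f_i(w^t)-h_i^t}^2} = \norm{\nabla f_i(w^t)-h_i^t}^2 + \Exp{\norm{\widetilde{\nabla} f_i(w^t)-\nabla f_i(w^t)}^2}$ reproduces the deterministic bound \eqref{eq:g_t} plus an additive $\frac{(\omega+1)\sigma^2}{n}$.

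The same nested-expectation argument feeds the other two blocks. In the shift recursion the cross term is again unchanged (by unbiasedness), while the second-moment term picks up $\Exp{\norm{\cC_i^D(\widetilde{\nabla} f_i(w^t)-h_i^t)}^2} \leq (\omega+1)\bigl(\norm{\nabla f_i(w^t)-h_i^t}^2 + \sigma^2\bigr)$, so \eqref{eq:h_t} acquires an extra $\beta^2(\omega+1)\sigma^2$ while the term with coefficient $\beta(\omega+1)-1 \leq 0$ is discarded exactly as before. In the primal recursion the variance-decomposition step \eqref{eq:vardecomp} isolates the same factor $\gamma^2\ExpSub{t+1}{\norm{g^t - \nabla f(w^t)}^2}$, which by the previous paragraph contributes an additive $\frac{\gamma^2(\omega+1)\sigma^2}{n}$ on top of the deterministic bound.

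Finally I would combine the blocks with the \emph{same} $\kappa$ and $\nu$ as in Theorem~\ref{theorem:general_diana}; every coefficient of $\norm{x^t-x^*}^2$, $\norm{w^t-x^t}^2$, $\frac1n\sum_i\norm{h_i^t-\nabla f_i(x^*)}^2$ and $f(x^t)-f(x^*)$ is identical, so the recursion closes as before and the only residue is the sum of the three noise contributions $\gamma\cdot\frac{(\omega+1)\sigma^2}{n} + \kappa\beta^2(\omega+1)\sigma^2 + \nu\frac{\gamma^2(\omega+1)\sigma^2}{n}$. Using $\kappa \leq \frac{8\gamma\omega}{n\beta}$ with $\beta(\omega+1)\leq 1$ bounds the middle term by $\frac{8\gamma(\omega+1)\sigma^2}{n}$, and using $\nu \leq \frac{192\gamma\omega\widehat{L}^2}{n\alpha}+\frac{32L}{\alpha}$ with $\gamma\leq \frac{\sqrt{n\alpha}}{20\sqrt{\omega}\widehat{L}}$ and $\gamma\leq\frac{\alpha}{100L}$ gives $\nu\gamma^2 \leq \gamma$, bounding the last term by $\frac{\gamma(\omega+1)\sigma^2}{n}$; altogether the residue is at most $\frac{10\gamma(\omega+1)\sigma^2}{n} \leq \frac{12\gamma(\omega+1)\sigma^2}{n}$, which is exactly the claimed additive term. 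The main obstacle is not conceptual but bookkeeping: ordering the two layers of conditional expectation (compressor versus sampling noise) so that all cross terms genuinely vanish, and then verifying that the three accumulated $\sigma^2$-terms collapse into the single constant $\frac{12\gamma(\omega+1)\sigma^2}{n}$ under the stepsize bounds.
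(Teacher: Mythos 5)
Your proposal is correct and follows essentially the same route as the paper's proof: reuse of \eqref{eq:f_x_t} via unbiasedness of $g^t$, nested variance decomposition separating compressor and sampling noise in the bounds for $\ExpSub{t+1}{\norm{g^t-\nabla f(w^t)}^2}$, the shift recursion, and the primal recursion, then recombination with the same $\kappa$, $\nu$ and stepsize constraints. Even your final accounting of the three $\sigma^2$-terms (roughly $1+8+1 \leq 12$ in units of $\nicefrac{\gamma(\omega+1)\sigma^2}{n}$) matches the paper's calculation.
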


\begin{proof}
    First, we bound $\ExpSub{t+1}{\norm{g^t - \nabla f(w^t)}^2},$ $\frac{1}{n}\sum_{i=1}^n \ExpSub{t+1}{\norm{h^{t+1}_i - \nabla f_i(x^*)}^2}$ and $\ExpSub{t+1}{\norm{w^{t+1}-x^{t+1}}^2}.$ 
   
    Using  independence of the compressors, we have
    \begin{eqnarray*}
       &&\ExpSub{t+1}{\norm{g^t - \nabla f(w^t)}^2}\\
       &=&\ExpSub{t+1}{\norm{h^t + \frac{1}{n}\sum_{i=1}^n \cC_i^{D}(\widetilde{\nabla} f_i(w^t) - h^t_i) - \nabla f(w^t)}^2} \\
       &=&\frac{1}{n^2}\sum_{i=1}^n\ExpSub{t+1}{\norm{\cC_i^{D}(\widetilde{\nabla} f_i(w^t) - h^t_i) - \left(\nabla f_i(w^t) - h^t_i\right)}^2} \\
       &\overset{\eqref{eq:vardecomp}}{=}&\frac{1}{n^2}\sum_{i=1}^n \left( \ExpSub{t+1}{\norm{\cC_i^{D}(\widetilde{\nabla} f_i(w^t) - h^t_i) - \left(\widetilde{\nabla} f_i(w^t) - h^t_i\right)}^2} + \ExpSub{t+1}{\norm{\widetilde{\nabla} f_i(w^t) - \nabla f_i(w^t)}^2} \right)\\
       &\leq&\frac{\omega}{n^2}\sum_{i=1}^n \ExpSub{t+1}{\norm{\widetilde{\nabla} f_i(w^t) - h^t_i}^2} + \frac{1}{n^2}\sum_{i=1}^n \ExpSub{t+1}{\norm{\widetilde{\nabla} f_i(w^t) - \nabla f_i(w^t)}^2} \\
       &\overset{\eqref{eq:vardecomp}}{=}&\frac{\omega}{n^2}\sum_{i=1}^n \norm{\nabla f_i(w^t) - h^t_i}^2 + \frac{\omega + 1}{n^2}\sum_{i=1}^n \ExpSub{t+1}{\norm{\widetilde{\nabla} f_i(w^t) - \nabla f_i(w^t)}^2} \\
       &\leq&\frac{\omega}{n^2}\sum_{i=1}^n \norm{\nabla f_i(w^t) - h^t_i}^2 + \frac{(\omega + 1) \sigma^2}{n} \\
       &\leq&\frac{2 \omega}{n^2}\sum_{i=1}^n \norm{h^t_i - \nabla f_i(x^*)}^2 + \frac{2\omega}{n^2}\sum_{i=1}^n \norm{\nabla f_i(w^t) - \nabla f_i(x^*)}^2 + \frac{(\omega + 1) \sigma^2}{n}\\
       &\leq&\frac{2 \omega}{n^2}\sum_{i=1}^n \norm{h^t_i - \nabla f_i(x^*)}^2 + \frac{4\omega}{n^2}\sum_{i=1}^n \norm{\nabla f_i(w^t) - \nabla f_i(x^t)}^2 \\
       &&\qquad+ \frac{4\omega}{n^2}\sum_{i=1}^n \norm{\nabla f_i(x^t) - \nabla f_i(x^*)}^2 + \frac{(\omega + 1) \sigma^2}{n},
   \end{eqnarray*}
   where in the last three inequalities, we used \eqref{eq:compressor} and \eqref{eq:young_2}. Using Assumption~\ref{ass:workers_lipschitz_constant} and Lemma~\ref{lemma:lipt_func}, we obtain
   \begin{align*}
    &\ExpSub{t+1}{\norm{g^t - \nabla f(w^t)}^2} \nonumber\\
    &\leq\frac{2 \omega}{n^2}\sum_{i=1}^n \norm{h^t_i - \nabla f_i(x^*)}^2 + \frac{4\omega \widehat{L}^2}{n} \norm{w^t - x^t}^2 + \frac{8\omega L_{\max}}{n} \left(f(x^t) - f(x^*)\right) + \frac{(\omega + 1) \sigma^2}{n}.
   \end{align*}
   Next, we bound $\frac{1}{n}\sum_{i=1}^n \norm{h^{t+1}_i - \nabla f_i(x^*)}^2$ to construct a Lyapunov function:
 \begin{eqnarray*}
    &&\frac{1}{n}\sum_{i=1}^n \ExpSub{t+1}{\norm{h^{t+1}_i - \nabla f_i(x^*)}^2}\\
    &=&\frac{1}{n}\sum_{i=1}^n \ExpSub{t+1}{\norm{h^{t}_i + \beta \cC_i^{D}(\widetilde{\nabla} f_i(w^t) - h^t_i) - \nabla f_i(x^*)}^2}\\
    &=&\frac{1}{n}\sum_{i=1}^n \norm{h^{t}_i - \nabla f_i(x^*)}^2 + \frac{2\beta}{n}\sum_{i=1}^n \inp{h^{t}_i - \nabla f_i(x^*)}{\ExpSub{t+1}{\cC_i^{D}(\widetilde{\nabla} f_i(w^t) - h^t_i)}} \\
    &&\qquad+ \frac{\beta^2}{n}\sum_{i=1}^n \ExpSub{t+1}{\norm{\cC_i^{D}(\widetilde{\nabla} f_i(w^t) - h^t_i)}^2}\\
    &\overset{\eqref{eq:compressor}}{\leq}&\frac{1}{n}\sum_{i=1}^n \norm{h^{t}_i - \nabla f_i(x^*)}^2 + \frac{2\beta}{n}\sum_{i=1}^n \inp{h^{t}_i - \nabla f_i(x^*)}{\nabla f_i(w^t) - h^t_i} \\
    &&\qquad+ \frac{\beta^2(\omega + 1)}{n}\sum_{i=1}^n \ExpSub{t+1}{\norm{\widetilde{\nabla} f_i(w^t) - h^t_i}^2}\\
    &\overset{\eqref{eq:vardecomp}}{=}&\frac{1}{n}\sum_{i=1}^n \norm{h^{t}_i - \nabla f_i(x^*)}^2 + \frac{2\beta}{n}\sum_{i=1}^n \inp{h^{t}_i - \nabla f_i(x^*)}{\nabla f_i(w^t) - h^t_i} \\
    &&\qquad + \frac{\beta^2(\omega + 1)}{n}\sum_{i=1}^n \ExpSub{t+1}{\norm{\nabla f_i(w^t) - h^t_i}^2} + \frac{\beta^2(\omega + 1)}{n}\sum_{i=1}^n \ExpSub{t+1}{\norm{\widetilde{\nabla} f_i(w^t) - \nabla f_i(w^t)}^2}\\
    &\leq&\frac{1}{n}\sum_{i=1}^n \norm{h^{t}_i - \nabla f_i(x^*)}^2 + \frac{2\beta}{n}\sum_{i=1}^n \inp{h^{t}_i - \nabla f_i(x^*)}{\nabla f_i(w^t) - h^t_i} \\
    &&\qquad + \frac{\beta^2(\omega + 1)}{n}\sum_{i=1}^n \ExpSub{t+1}{\norm{\nabla f_i(w^t) - h^t_i}^2} + \beta^2(\omega + 1) \sigma^2 \\
    &\overset{\eqref{eq:inp}}{=}&\left(1 - \beta\right)\frac{1}{n}\sum_{i=1}^n \norm{h^{t}_i - \nabla f_i(x^*)}^2 + \frac{\beta}{n}\sum_{i=1}^n \norm{\nabla f_i(w^t) - \nabla f_i(x^*)}^2 \\
    &&\qquad + \frac{\beta \left(\beta(\omega + 1) - 1\right)}{n}\sum_{i=1}^n \norm{\nabla f_i(w^t) - h^t_i}^2 + \beta^2(\omega + 1) \sigma^2\\
    &\leq& \left(1 - \beta\right)\frac{1}{n}\sum_{i=1}^n \norm{h^{t}_i - \nabla f_i(x^*)}^2 + \frac{\beta}{n}\sum_{i=1}^n \norm{\nabla f_i(w^t) - \nabla f_i(x^*)}^2 + \beta^2(\omega + 1) \sigma^2,
 \end{eqnarray*}
 where we use the assumption $\beta \in \left[0, \frac{1}{\omega + 1}\right].$
 Using \eqref{eq:young_2}, Assumption~\ref{ass:workers_lipschitz_constant} and Lemma~\ref{lemma:lipt_func}, we have
 \begin{align*}
    \frac{1}{n}\sum_{i=1}^n &\ExpSub{t+1}{\norm{h^{t+1}_i - \nabla f_i(x^*)}^2}
    \leq \left(1 - \beta\right)\frac{1}{n}\sum_{i=1}^n \norm{h^{t}_i - \nabla f_i(x^*)}^2 \\
    &\qquad+ 2\beta \widehat{L}^2 \norm{w^t - x^t}^2 + 4\beta L_{\max} \left(f(x^t) - f(x^*)\right) + \beta^2(\omega + 1) \sigma^2.
 \end{align*}
 It remains to bound $\ExpSub{t+1}{\norm{w^{t+1} - x^{t+1}}^2}:$
 \begin{eqnarray*}
    \ExpSub{t+1}{\norm{w^{t+1}-x^{t+1}}^2} &=& \ExpSub{t+1}{\norm{w^t + \mathcal{C}^p(x^{t+1} - w^t) - x^{t+1}}^2} \\
    & \overset{\eqref{eq:biased_compressor}}{\leq} &(1-\alpha) \ExpSub{t+1}{\norm{x^{t+1} - w^t}^2} \\
    & =& (1-\alpha) \ExpSub{t+1}{\norm{x^{t} - \gamma g^t - w^t}^2} \\
    & \overset{\eqref{eq:vardecomp}}{=} &(1-\alpha) \gamma^2 \ExpSub{t+1}{\norm{g^t - \nabla f(w^{t})}^2} + (1-\alpha) \norm{x^t - \gamma \nabla f(w^{t}) - w^t}^2 \\
    & \overset{\eqref{eq:young}}{\leq}& \gamma^2 \ExpSub{t+1}{\norm{g^t - \nabla f(w^{t})}^2} + \left(1-\frac{\alpha}{2}\right) \norm{w^t - x^t}^2 + \frac{2 \gamma^2}{\alpha} \norm{\nabla f(w^{t})}^2 \\
    & \overset{\eqref{eq:young_2}}{\leq}& \gamma^2 \ExpSub{t+1}{\norm{g^t - \nabla f(w^{t})}^2}
    + \left(1-\frac{\alpha}{2}\right) \norm{w^t - x^t}^2 \\
    &&\qquad + \frac{4 \gamma^2}{\alpha} \norm{\nabla f(w^{t}) - \nabla f(x^{t})}^2 + \frac{4 \gamma^2}{\alpha} \norm{\nabla f(x^{t}) - \nabla f(x^{*})}^2.
 \end{eqnarray*}
 Using Assumption~\ref{ass:lipschitz_constant} and Lemma~\ref{lemma:lipt_func}, we obtain
 \begin{align*}
    &\ExpSub{t+1}{\norm{w^{t+1}-x^{t+1}}^2} \leq \gamma^2 \ExpSub{t+1}{\norm{g^t - \nabla f(w^{t})}^2} \\
    &\qquad + \left(1-\frac{\alpha}{2} + \frac{4 \gamma^2 L^2}{\alpha}\right) \norm{w^t - x^t}^2 + \frac{8 \gamma^2 L}{\alpha} \left(f(x^{t}) - f(x^{*})\right) \\
    &\leq \gamma^2 \left(\frac{2 \omega}{n^2}\sum_{i=1}^n \norm{h^t_i - \nabla f_i(x^*)}^2 + \frac{4\omega \widehat{L}^2}{n} \norm{w^t - x^t}^2 + \frac{8\omega L_{\max}}{n} \left(f(x^t) - f(x^*)\right) + \frac{(\omega + 1) \sigma^2}{n}\right) \\
    &\qquad + \left(1-\frac{\alpha}{2} + \frac{4 \gamma^2 L^2}{\alpha}\right) \norm{w^t - x^t}^2 + \frac{8 \gamma^2 L}{\alpha} \left(f(x^{t}) - f(x^{*})\right) \\
    &= \frac{2 \gamma^2 \omega}{n^2}\sum_{i=1}^n \norm{h^t_i - \nabla f_i(x^*)}^2
    + \left(1-\frac{\alpha}{2} + \frac{4 \gamma^2 L^2}{\alpha} + \frac{4\gamma^2\omega \widehat{L}^2}{n}\right) \norm{w^t - x^t}^2 \\
    &\qquad + \left(\frac{8\gamma^2\omega L_{\max}}{n} + \frac{8 \gamma^2 L}{\alpha}\right) \left(f(x^{t}) - f(x^{*})\right)
    + \frac{\gamma^2 (\omega + 1) \sigma^2}{n} \\
    &\leq \left(1-\frac{\alpha}{4}\right) \norm{w^t - x^t}^2 + \frac{2 \gamma^2 \omega}{n^2}\sum_{i=1}^n \norm{h^t_i - \nabla f_i(x^*)}^2 \\
    &\qquad+ \left(\frac{8\gamma^2\omega L_{\max}}{n} + \frac{8 \gamma^2 L}{\alpha}\right) \left(f(x^{t}) - f(x^{*})\right) + \frac{\gamma^2 (\omega + 1) \sigma^2}{n},
 \end{align*}
 where we assume that $\gamma \leq \frac{\alpha}{\sqrt{32} L}$ and $\gamma \leq \frac{\sqrt{\alpha n}}{\sqrt{32 \omega}\widehat{L}}.$ Let us fix some constants $\kappa \geq 0$ and $\nu \geq 0.$ 
 In the proof of \eqref{eq:f_x_t} in Theorem~\ref{theorem:general_diana}, we do not use the structure of $g^t$. Hence we can reuse \eqref{eq:f_x_t} here and combine it with the above inequalities to obtain
 {\footnotesize
 \begin{align*}
    &\ExpSub{t+1}{f(x^{t+1})} + \kappa \frac{1}{n}\sum_{i=1}^n \ExpSub{t+1}{\norm{h^{t+1}_i - \nabla f_i(x^*)}^2} + \nu \ExpSub{t+1}{\norm{w^{t+1}-x^{t+1}}^2} \\
    &\leq f(x^*) + \gamma \left(\frac{2 \omega}{n^2}\sum_{i=1}^n \norm{h^t_i - \nabla f_i(x^*)}^2 + \frac{4\omega \widehat{L}^2}{n} \norm{w^t - x^t}^2 + \frac{8\omega L_{\max}}{n} \left(f(x^t) - f(x^*)\right) + \frac{(\omega + 1) \sigma^2}{n}\right)\\
       &\quad + \frac{1}{2\gamma}\left(1 - \frac{\gamma \mu}{2}\right)\norm{x^t - x^*}^2 - \frac{1}{2\gamma}\ExpSub{t+1}{\norm{x^{t+1} - x^*}^2} + 2 L \norm{w^t - x^t}^2 \\
       &\quad + \kappa \left(\left(1 - \beta\right)\frac{1}{n}\sum_{i=1}^n \norm{h^{t}_i - \nabla f_i(x^*)}^2 + 2\beta \widehat{L}^2 \norm{w^t - x^t}^2 + 4\beta L_{\max} \left(f(x^t) - f(x^*)\right) + \beta^2(\omega + 1) \sigma^2\right) \\
       &\quad + \nu \left(\left(1-\frac{\alpha}{4}\right) \norm{w^t - x^t}^2 + \frac{2 \gamma^2 \omega}{n^2}\sum_{i=1}^n \norm{h^t_i - \nabla f_i(x^*)}^2 + \left(\frac{8\gamma^2\omega L_{\max}}{n} + \frac{8 \gamma^2 L}{\alpha}\right) \left(f(x^{t}) - f(x^{*})\right) + \frac{\gamma^2 (\omega + 1) \sigma^2}{n}\right).
 \end{align*}
 }
 Rearranging the last inequality, one can get
 \begin{align*}
    &\frac{1}{2\gamma}\ExpSub{t+1}{\norm{x^{t+1} - x^*}^2} + \ExpSub{t+1}{f(x^{t+1}) - f(x^*)} \\
    &\qquad+ \kappa \frac{1}{n}\sum_{i=1}^n \ExpSub{t+1}{\norm{h^{t+1}_i - \nabla f_i(x^*)}^2} + \nu \ExpSub{t+1}{\norm{w^{t+1}-x^{t+1}}^2} \nonumber \\
    &\leq \frac{1}{2\gamma}\left(1 - \frac{\gamma \mu}{2}\right)\norm{x^t - x^*}^2 \nonumber \\
    &\qquad+ \left(\frac{8\gamma\omega L_{\max}}{n} + \kappa 4\beta L_{\max} + \nu \left(\frac{8\gamma^2\omega L_{\max}}{n} + \frac{8 \gamma^2 L}{\alpha}\right)\right)\left(f(x^{t}) - f(x^{*})\right) \nonumber\\
    &\qquad + \left(\frac{2 \gamma \omega}{n} + \nu \frac{2 \gamma^2 \omega}{n} + \kappa \left(1 - \beta\right)\right)\frac{1}{n}\sum_{i=1}^n \norm{h^t_i - \nabla f_i(x^*)}^2 \nonumber\\
    &\qquad + \left(\frac{4\gamma\omega \widehat{L}^2}{n} + 2L + \kappa 2\beta \widehat{L}^2 + \nu \left(1-\frac{\alpha}{4}\right)\right)\norm{w^t - x^t}^2 \\
    &\qquad + \frac{\gamma(\omega + 1) \sigma^2}{n} + \kappa \beta^2(\omega + 1) \sigma^2 + \nu \frac{\gamma^2 (\omega + 1) \sigma^2}{n}.
 \end{align*}
 Using the same reasoning as in the proof of Theorem~\ref{theorem:general_diana}, we have
 \begin{align*}
    &\frac{1}{2\gamma}\ExpSub{t+1}{\norm{x^{t+1} - x^*}^2} + \ExpSub{t+1}{f(x^{t+1}) - f(x^*)}\\
    &\qquad + \kappa \frac{1}{n}\sum_{i=1}^n \ExpSub{t+1}{\norm{h^{t+1}_i - \nabla f_i(x^*)}^2} + \nu \ExpSub{t+1}{\norm{w^{t+1}-x^{t+1}}^2} \nonumber \\
    &\leq \frac{1}{2\gamma}\left(1 - \frac{\gamma \mu}{2}\right)\norm{x^t - x^*}^2 + \kappa \left(1 - \frac{\gamma \mu}{2}\right)\frac{1}{n}\sum_{i=1}^n \norm{h^t_i - \nabla f_i(x^*)}^2 + \nu \left(1 - \frac{\gamma \mu}{2}\right)\norm{w^t - x^t}^2\\
    &\qquad + \frac{1}{2}\left(f(x^{t}) - f(x^{*})\right)
    + \frac{\gamma(\omega + 1) \sigma^2}{n} + \kappa \beta^2(\omega + 1) \sigma^2 + \nu \frac{\gamma^2 (\omega + 1) \sigma^2}{n}
 \end{align*}
 for some $\kappa \leq \frac{8 \gamma \omega}{n \beta}$ and $\nu \leq \frac{192\gamma\omega \widehat{L}^2}{n \alpha} + \frac{32L}{\alpha}.$ Thus
 \begin{align*}
    &\frac{1}{2\gamma}\ExpSub{t+1}{\norm{x^{t+1} - x^*}^2} + \ExpSub{t+1}{f(x^{t+1}) - f(x^*)}\\
    &\qquad + \kappa \frac{1}{n}\sum_{i=1}^n \ExpSub{t+1}{\norm{h^{t+1}_i - \nabla f_i(x^*)}^2} + \nu \ExpSub{t+1}{\norm{w^{t+1}-x^{t+1}}^2} \nonumber \\
    &\leq \frac{1}{2\gamma}\left(1 - \frac{\gamma \mu}{2}\right)\norm{x^t - x^*}^2 + \kappa \left(1 - \frac{\gamma \mu}{2}\right)\frac{1}{n}\sum_{i=1}^n \norm{h^t_i - \nabla f_i(x^*)}^2 + \nu \left(1 - \frac{\gamma \mu}{2}\right)\norm{w^t - x^t}^2\\
    &\qquad + \frac{1}{2}\left(f(x^{t}) - f(x^{*})\right)
    + \frac{\gamma(\omega + 1) \sigma^2}{n} \\
    &\qquad + \frac{8 \gamma \beta \omega (\omega + 1) \sigma^2}{n} + \frac{192 \gamma^3\omega (\omega + 1) \widehat{L}^2 \sigma^2}{n^2 \alpha} + \frac{32 \gamma^2 (\omega + 1) L \sigma^2}{n \alpha} \\
    &\leq \frac{1}{2\gamma}\left(1 - \frac{\gamma \mu}{2}\right)\norm{x^t - x^*}^2 + \kappa \left(1 - \frac{\gamma \mu}{2}\right)\frac{1}{n}\sum_{i=1}^n \norm{h^t_i - \nabla f_i(x^*)}^2 + \nu \left(1 - \frac{\gamma \mu}{2}\right)\norm{w^t - x^t}^2\\
    &\qquad + \frac{1}{2}\left(f(x^{t}) - f(x^{*})\right)
    + \frac{12 \gamma(\omega + 1) \sigma^2}{n},
 \end{align*}
 where used the bounds on $\gamma$ and $\beta.$

 In the proof, we have the requirement that
 \begin{align*}
    &\gamma \leq \min\left\{\frac{n}{160 \omega L_{\max}}, \frac{\sqrt{n \alpha}}{20\sqrt{\omega} \widehat{L}}, \frac{\alpha}{100 L}, \frac{\beta}{\mu}\right\}.
 \end{align*}
 As in the proof of Theorem~\ref{theorem:general_diana}, using Lemma~\ref{lemma:lipt_constants}, we can simplify it to
 \begin{align*}
    &\gamma \leq \min\left\{\frac{n}{160 \omega L_{\max}}, \frac{\alpha}{100 L}, \frac{\beta}{\mu}\right\}.
 \end{align*}
 
\end{proof}

 \THEOREMDIANASTRONGLYSTOCHASTIC*
 \begin{proof}
    Using $\gamma \leq \frac{\alpha}{100 L} \leq \frac{1}{\mu},$ we can bound \eqref{eq:main_diana_stochastic} as follows:
    \begin{align*}
       &\frac{1}{2\gamma}\Exp{\norm{x^{t+1} - x^*}^2} + \Exp{f(x^{t+1}) - f(x^*)}\\
       &\qquad + \kappa \frac{1}{n}\sum_{i=1}^n \Exp{\norm{h^{t+1}_i - \nabla f_i(x^*)}^2} + \nu \Exp{\norm{w^{t+1}-x^{t+1}}^2} \nonumber\\
       &\leq \frac{1}{2\gamma}\left(1 - \frac{\gamma \mu}{2}\right)\Exp{\norm{x^t - x^*}^2} + \frac{1}{2}\Exp{f(x^{t}) - f(x^{*})} \nonumber\\
       &\qquad + \kappa \left(1 - \frac{\gamma \mu}{2}\right)\frac{1}{n}\sum_{i=1}^n \Exp{\norm{h^t_i - \nabla f_i(x^*)}^2} + \nu \left(1 - \frac{\gamma \mu}{2}\right)\Exp{\norm{w^t - x^t}^2} + \frac{12 \gamma(\omega + 1) \sigma^2}{n}\\
       &\leq \frac{1}{2\gamma}\left(1 - \frac{\gamma \mu}{2}\right)\Exp{\norm{x^t - x^*}^2} + \left(1 - \frac{\gamma \mu}{2}\right)\Exp{f(x^{t}) - f(x^{*})} \nonumber\\
       &\qquad + \kappa \left(1 - \frac{\gamma \mu}{2}\right)\frac{1}{n}\sum_{i=1}^n \Exp{\norm{h^t_i - \nabla f_i(x^*)}^2} + \nu \left(1 - \frac{\gamma \mu}{2}\right)\Exp{\norm{w^t - x^t}^2} + \frac{12 \gamma(\omega + 1) \sigma^2}{n}\\
       &= \left(1 - \frac{\gamma \mu}{2}\right)\left(\frac{1}{2\gamma}\Exp{\norm{x^t - x^*}^2} + \Exp{f(x^{t}) - f(x^{*})} + \kappa \frac{1}{n}\sum_{i=1}^n \Exp{\norm{h^t_i - \nabla f_i(x^*)}^2} + \nu \Exp{\norm{w^t - x^t}^2}\right)\\
       &\qquad + \frac{12 \gamma(\omega + 1) \sigma^2}{n}.
    \end{align*}
    Recursively applying the last inequality and using the assumption $x^0 = w^0,$ one can get that
    \begin{align*}
       \frac{1}{2\gamma}&\Exp{\norm{x^{T} - x^*}^2} + \Exp{f(x^{T}) - f(x^*)} + \kappa \frac{1}{n}\sum_{i=1}^n \Exp{\norm{h^{T}_i - \nabla f_i(x^*)}^2} + \nu \Exp{\norm{w^{T}-x^{T}}^2} \\
       &\leq \left(1 - \frac{\gamma \mu}{2}\right)^T\left(\frac{1}{2\gamma}\Exp{\norm{x^0 - x^*}^2} + \left(f(x^{0}) - f(x^{*})\right) + \kappa \frac{1}{n}\sum_{i=1}^n \norm{h^0_i - \nabla f_i(x^*)}^2\right) \\
       &\qquad + \sum_{i=0}^{T-1}\left(1 - \frac{\gamma \mu}{2}\right)^i \frac{12 \gamma(\omega + 1) \sigma^2}{n} \\
       &\leq \left(1 - \frac{\gamma \mu}{2}\right)^T\left(\frac{1}{2\gamma}\Exp{\norm{x^0 - x^*}^2} + \left(f(x^{0}) - f(x^{*})\right) + \kappa \frac{1}{n}\sum_{i=1}^n \norm{h^0_i - \nabla f_i(x^*)}^2\right) \\
       &\qquad + \frac{24 (\omega + 1) \sigma^2}{n \mu}
    \end{align*}
    Using the nonnegativity of the terms and the bound on $\kappa$, we obtain
    \begin{align*}
       &\frac{1}{2\gamma}\Exp{\norm{x^{T} - x^*}^2} + \Exp{f(x^{T}) - f(x^*)} \\
       &\leq \left(1 - \frac{\gamma \mu}{2}\right)^T\left(\frac{1}{2\gamma}\Exp{\norm{x^0 - x^*}^2} + \left(f(x^{0}) - f(x^{*})\right) + \frac{8 \gamma \omega}{n^2 \beta}\sum_{i=1}^n \norm{h^0_i - \nabla f_i(x^*)}^2\right) \\
       &\qquad + \frac{24 (\omega + 1) \sigma^2}{n \mu}.
    \end{align*}
\end{proof}

\begin{restatable}{theorem}{THEOREMDIANAGENERALCONVEXSTOCHASTIC}
        \label{theorem:diana_general_convex_stochastic}
        Let us consider Algorithm~\ref{algorithm:diana_ef21_p} using stochastic gradients $\widetilde{\nabla} f_i$ instead of the exact gradients $\nabla f_i$ for all $i \in [n]$.
        Let us assume that Assumptions~\ref{ass:lipschitz_constant}, \ref{ass:workers_lipschitz_constant}, \ref{ass:convex} and \ref{ass:stochastic_unbiased_and_variance_bounded} hold, the strong convexity parameter satisfies $\mu = 0,$ $\beta = \frac{1}{\omega + 1}$, $x^0 = w^0,$ and
        \begin{align*}
           &\gamma \leq \min\left\{\frac{n}{160 \omega L_{\max}}, \frac{\alpha}{100 L}\right\}.
        \end{align*}
        Then Algorithm~\ref{algorithm:diana_ef21_p} guarantees the following convergence rate:
        \begin{align*}
           f\left(\frac{1}{T} \sum_{t=1}^{T} x^{t}\right) - f(x^*) 
           &\leq \frac{1}{\gamma T}\norm{x^0 - x^*}^2 + \frac{f(x^{0}) - \nabla f(x^{*})}{T} \\
           &\qquad+ \frac{16 \gamma \omega (\omega + 1)}{T n^2} \sum_{i=1}^n \norm{h^0_i - \nabla f_i(x^*)}^2 + \frac{24\gamma(\omega + 1) \sigma^2}{n}.
        \end{align*}
\end{restatable}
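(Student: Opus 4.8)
The plan is to follow the proof of Theorem~\ref{theorem:diana_general_convex} almost verbatim, starting from the one-step recursion \eqref{eq:main_diana_stochastic} established in the auxiliary theorem immediately above, and carrying along the extra statistical term $\frac{12\gamma(\omega+1)\sigma^2}{n}$ that now sits on its right-hand side. First I would specialize \eqref{eq:main_diana_stochastic} to $\mu = 0$: every contraction factor $\left(1-\frac{\gamma\mu}{2}\right)$ collapses to $1$, and the constraint $\gamma \le \frac{\beta}{\mu}$ in the auxiliary theorem becomes vacuous, so its hypotheses hold under exactly the stepsize bound assumed here (with $\beta = \frac{1}{\omega+1}$). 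The resulting recursion keeps a full copy of $\Exp{f(x^{t+1})-f(x^*)}$ on the left but only a half copy of $\Exp{f(x^t)-f(x^*)}$ on the right.

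Next I would absorb the remaining Lyapunov pieces --- the term $\frac{1}{2\gamma}\Exp{\norm{x^t-x^*}^2}$, the $\kappa$-weighted gradient-shift term, the $\nu$-weighted $\Exp{\norm{w^t-x^t}^2}$ term, and one half of $\Exp{f(x^t)-f(x^*)}$ --- into a single potential $B^t$, so that the recursion reads $B^{t+1} + \frac12\Exp{f(x^{t+1})-f(x^*)} \le B^t + \frac{12\gamma(\omega+1)\sigma^2}{n}$. Summing over $t = 0,\dots,T-1$ telescopes $B^t$, accumulates $\frac12\sum_{t=1}^{T}\Exp{f(x^t)-f(x^*)}$ on the left, and adds $T\cdot\frac{12\gamma(\omega+1)\sigma^2}{n}$ on the right. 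The choice $x^0 = w^0$ annihilates the initial $\nu\norm{w^0-x^0}^2$ term, and dropping the nonnegative terminal quantities leaves only $\frac12\sum_{t=1}^{T}\Exp{f(x^t)-f(x^*)}$ on the left.

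Finally I would divide by $\frac{T}{2}$ and invoke convexity of $f$ via Jensen's inequality to bound $\Exp{f\!\left(\frac1T\sum_{t=1}^{T}x^t\right)-f(x^*)} \le \frac1T\sum_{t=1}^{T}\Exp{f(x^t)-f(x^*)}$. The initial potential then contributes $\frac{1}{\gamma T}\norm{x^0-x^*}^2 + \frac{f(x^0)-f(x^*)}{T}$; substituting the bound $\kappa \le \frac{8\gamma\omega}{n\beta} = \frac{8\gamma\omega(\omega+1)}{n}$ into the gradient-shift coefficient produces the claimed factor $\frac{16\gamma\omega(\omega+1)}{Tn^2}$; and the accumulated noise yields $\frac{24\gamma(\omega+1)\sigma^2}{n}$, matching the statement exactly.

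I expect no genuine difficulty, since the substantive work --- designing the Lyapunov function, selecting $\kappa$ and $\nu$, and bounding the variance of the \algname{DIANA} estimator $g^t$ under stochastic gradients --- is already packaged inside \eqref{eq:main_diana_stochastic}. The only two points needing care are the bookkeeping of the half-weight on the function-value term through the telescoping (ensuring the averaged iterate picks up the correct constant factor of $2$), and confirming that the $\mu=0$ specialization is legitimate, i.e., that the disappearance of the $\gamma \le \frac{\beta}{\mu}$ constraint is precisely the expected behavior in the non-strongly-convex regime.
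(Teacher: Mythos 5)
Your proposal is correct and follows essentially the same route as the paper's own proof: specialize the recursion \eqref{eq:main_diana_stochastic} to $\mu=0$, telescope over $t=0,\dots,T-1$ using $x^0=w^0$ and nonnegativity of the terminal Lyapunov terms, accumulate the noise into $\frac{24\gamma(\omega+1)\sigma^2}{n}$, bound $\kappa \leq \frac{8\gamma\omega}{n\beta} = \frac{8\gamma\omega(\omega+1)}{n}$, and finish with Jensen's inequality. Your potential-function bookkeeping (half-weight on the function-value term) reproduces the paper's summation exactly, so there is nothing to correct.
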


\begin{proof}
    Let us bound \eqref{eq:main_diana_stochastic}:
    \begin{align*}
       &\frac{1}{2\gamma}\Exp{\norm{x^{t+1} - x^*}^2} + \Exp{f(x^{t+1}) - f(x^*)} \\
       &\qquad + \kappa \frac{1}{n}\sum_{i=1}^n \Exp{\norm{h^{t+1}_i - \nabla f_i(x^*)}^2} + \nu \Exp{\norm{w^{t+1}-x^{t+1}}^2} \nonumber\\
       &\leq \frac{1}{2\gamma}\left(1 - \frac{\gamma \mu}{2}\right)\Exp{\norm{x^t - x^*}^2} + \frac{1}{2}\Exp{f(x^{t}) - f(x^{*})} \nonumber\\
       &\quad + \kappa \left(1 - \frac{\gamma \mu}{2}\right)\frac{1}{n}\sum_{i=1}^n \Exp{\norm{h^t_i - \nabla f_i(x^*)}^2} + \nu \left(1 - \frac{\gamma \mu}{2}\right)\Exp{\norm{w^t - x^t}^2} + \frac{12 \gamma(\omega + 1) \sigma^2}{n}\\
       &\leq \frac{1}{2\gamma}\Exp{\norm{x^t - x^*}^2} + \frac{1}{2}\Exp{f(x^{t}) - f(x^{*})} + \kappa \frac{1}{n}\sum_{i=1}^n \Exp{\norm{h^t_i - \nabla f_i(x^*)}^2} \\
       &\qquad + \nu \Exp{\norm{w^t - x^t}^2} + \frac{12 \gamma(\omega + 1) \sigma^2}{n}.
    \end{align*}
    Summing the inequality for $t \in \{0, \dots, T-1\}$ gives
    \begin{align*}
       &\frac{1}{2\gamma}\Exp{\norm{x^{T} - x^*}^2} + \frac{1}{2}\Exp{f(x^{T}) - f(x^*)} + \frac{1}{2} \sum_{t=1}^{T}\Exp{f(x^{t}) - f(x^*)} \\
       &\quad + \kappa \frac{1}{n}\sum_{i=1}^n \Exp{\norm{h^{T}_i - \nabla f_i(x^*)}^2} + \nu \Exp{\norm{w^{T}-x^{T}}^2} \nonumber\\
       &\leq \frac{1}{2\gamma}\norm{x^0 - x^*}^2 + \frac{1}{2}\left(f(x^{0}) - f(x^{*})\right) + \kappa \frac{1}{n}\sum_{i=1}^n \norm{h^0_i - \nabla f_i(x^*)}^2 \\
       &\qquad + \nu \norm{w^0 - x^0}^2 + \frac{12 T \gamma(\omega + 1) \sigma^2}{n}\\
       &\leq \frac{1}{2\gamma}\norm{x^0 - x^*}^2 + \frac{1}{2}\left(f(x^{0}) - f(x^{*})\right) + \frac{8 \gamma \omega}{n^2 \beta} \sum_{i=1}^n \norm{h^0_i - \nabla f_i(x^*)}^2 + \frac{12 T \gamma(\omega + 1) \sigma^2}{n},
    \end{align*}
    where we used the fact that $x^0 = w^0$ and the bound on $\kappa.$
    Using nonnegativity of the terms and convexity, we have 
    \begin{align*}
       f\left(\frac{1}{T} \sum_{t=1}^{T} x^{t}\right) - f(x^*) 
       &\leq \frac{1}{\gamma T}\norm{x^0 - x^*}^2 + \frac{f(x^{0}) - \nabla f(x^{*})}{T} \\
       &\qquad + \frac{16 \gamma \omega}{T n^2 \beta} \sum_{i=1}^n \norm{h^0_i - \nabla f_i(x^*)}^2 + \frac{24\gamma(\omega + 1) \sigma^2}{n}.
    \end{align*}
 \end{proof}

\clearpage
\section{Proofs for \algname{EF21-P + DCGD} in the Convex Case}

As mentioned before,  \algname{EF21-P + DCGD} arises a special case of  \algname{EF21-P + DIANA} if we do not attempt to learn any local gradient shifts $h_i^t$ and instead set them to $0$ throughout. This can be achieved by setting $\beta=0$.

        \begin{algorithm}[H]
        \footnotesize
            \caption{\algname{EF21-P + DCGD}}
            \begin{algorithmic}[1]
            \label{algorithm:dcgd_ef21_p}
            \STATE \textbf{Parameters:} learning rate $\gamma > 0$; initial iterate $x^0 \in \R^d$ {\color{gray}(stored on the server and the workers);} initial iterate shift $w^0 = x^0 \in \R^d$  {\color{gray}(stored on the server and the workers)} 
            \FOR{$t = 0, 1, \dots, T - 1$}
            \FOR{$i = 1, \dots, n$ {\bf in parallel}} 
            \STATE $g_i^t = \cC_i^{D}(\nabla f_i(w^t))$ \hfill {\scriptsize \color{gray} Compress gradient via $\cC_i^{D} \in \mathbb{U}(\omega)$}

            \STATE {\color{blue}Send message $g_i^t$ to the server}
            \ENDFOR
            \STATE $g^t = \frac{1}{n}\sum_{i = 1}^n g_i^t$ \hfill {\scriptsize \color{gray} Compute gradient estimator}
            \STATE $x^{t+1} = x^t - \gamma g^t$ \hfill {\scriptsize \color{gray} Take  gradient-type step}
            \STATE $p^{t+1} = \cC^{P}\left(x^{t+1} - w^t\right)$  \hfill {\scriptsize \color{gray} Compress shifted model on the server via $\cC^{P} \in \mathbb{B}\left(\alpha\right)$}
            \STATE $w^{t+1} = w^t + p^{t+1}$ \hfill {\scriptsize \color{gray} Update model shift}
            \STATE {\color{blue}Broadcast $p^{t+1}$ to all workers}
            \FOR{$i = 1, \dots, n$ {\bf in parallel}}
            \STATE $w^{t+1} = w^{t} + p^{t+1}$ \hfill  {\scriptsize \color{gray} Update model shift}
            \ENDFOR
            \ENDFOR
            \end{algorithmic}
         \end{algorithm}

The proofs in this section almost repeat the proofs from Section~\ref{sec:proofs_diana}.
\begin{theorem}
    \label{theorem:general_dcgd}
    Let us assume that Assumptions~\ref{ass:lipschitz_constant}, \ref{ass:workers_lipschitz_constant} and \ref{ass:convex} hold and choose
    \begin{align*}
        &\gamma \leq \min\left\{\frac{n}{160 \omega L_{\max}}, \frac{\alpha}{100 L}\right\}.
    \end{align*}
     Then Algorithm~\ref{algorithm:dcgd_ef21_p} guarantees that
     \begin{align}
        \frac{1}{2\gamma}&\Exp{\norm{x^{t+1} - x^*}^2} + \Exp{f(x^{t+1}) - f(x^*)} + \nu \Exp{\norm{w^{t+1}-x^{t+1}}^2} \nonumber \\
        &\leq \frac{1}{2\gamma}\left(1 - \frac{\gamma \mu}{2}\right)\Exp{\norm{x^t - x^*}^2} + \frac{1}{2}\Exp{f(x^{t}) - f(x^{*})} \nonumber\\
        &\qquad + \nu \left(1-\frac{\gamma \mu}{2}\right) \Exp{\norm{w^t - x^t}^2}
        + \frac{4 \gamma \omega}{n}\left(\frac{1}{n}\sum_{i=1}^n \norm{\nabla f_i(x^*)}^2\right),
        \label{eq:main_dcgd}
     \end{align}
     where $\nu \leq \frac{32\gamma\omega \widehat{L}^2}{n \alpha} + \frac{16L}{\alpha}.$
 \end{theorem}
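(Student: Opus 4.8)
The plan is to treat this statement as the $\beta=0$ specialization of Theorem~\ref{theorem:general_diana}: since \algname{EF21-P + DCGD} is \algname{EF21-P + DIANA} with all gradient shifts frozen at $h_i^t\equiv 0$, the entire scaffolding of that proof carries over, except that the Lyapunov function now needs only the single potential term $\nu\Exp{\norm{w^t-x^t}^2}$, with no $\kappa$-term tracking the shifts. First I would note that the derivation of \eqref{eq:f_x_t} in the proof of Theorem~\ref{theorem:general_diana} uses only $L$-smoothness, convexity, the descent identity for $x^{t+1}=x^t-\gamma g^t$, and the unbiasedness of the averaged compressed gradient, none of which depends on the internal structure of $g^t$; hence that inequality can be quoted verbatim.

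The one genuinely new computation is the compression-variance bound. Because \algname{DCGD} compresses $\nabla f_i(w^t)$ directly rather than the shifted gradient, independence and unbiasedness of the $\cC_i^{D}\in\mathbb{U}(\omega)$ give $\ExpSub{t+1}{\norm{g^t-\nabla f(w^t)}^2}\leq\frac{\omega}{n^2}\sum_{i=1}^n\norm{\nabla f_i(w^t)}^2$. I would split $\norm{\nabla f_i(w^t)}^2$ via \eqref{eq:young_2} into $\norm{\nabla f_i(w^t)-\nabla f_i(x^t)}^2$, $\norm{\nabla f_i(x^t)-\nabla f_i(x^*)}^2$, and the irreducible residual $\norm{\nabla f_i(x^*)}^2$, then control the first by Assumption~\ref{ass:workers_lipschitz_constant}, and the second by Lemma~\ref{lemma:lipt_func} together with $\nabla f(x^*)=0$ (which kills the inner-product term after averaging, yielding a clean $\frac{8\omega L_{\max}}{n}(f(x^t)-f(x^*))$ factor). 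This reproduces exactly the DIANA variance bound with the shift-distance replaced by the constant $\frac{2\omega}{n}\left(\frac{1}{n}\sum_i\norm{\nabla f_i(x^*)}^2\right)$, which is the source of the additive term in \eqref{eq:main_dcgd}.

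With this in hand, I would reuse the contractive step of Theorem~\ref{theorem:general_diana}: expand $\norm{w^{t+1}-x^{t+1}}^2$ using \eqref{eq:contractive_compressor}, the variance decomposition \eqref{eq:vardecomp}, Young's inequality \eqref{eq:young} with \eqref{eq:ineq1}--\eqref{eq:ineq2}, and the above variance bound, to get a recursion whose $\norm{w^t-x^t}^2$ coefficient collapses to $1-\frac{\alpha}{4}$ under $\gamma\leq\frac{\alpha}{\sqrt{32}L}$ and $\gamma\leq\frac{\sqrt{\alpha n}}{\sqrt{32\omega}\widehat{L}}$. Adding $\nu$ times this recursion to \eqref{eq:f_x_t} and collecting coefficients, the $w$-contraction requirement forces $\frac{4\gamma\omega\widehat{L}^2}{n}+2L\leq\nu\frac{\alpha}{8}$, i.e.\ precisely $\nu=\frac{32\gamma\omega\widehat{L}^2}{n\alpha}+\frac{16L}{\alpha}$. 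It then remains to check, using the four stepsize caps, that the accumulated coefficient of $f(x^t)-f(x^*)$ is at most $\frac{1}{2}$ and that the coefficient of $\frac{1}{n}\sum_i\norm{\nabla f_i(x^*)}^2$, namely $\frac{2\gamma\omega}{n}+\nu\frac{2\gamma^2\omega}{n}$, is at most $\frac{4\gamma\omega}{n}$; both reduce to showing a handful of products like $\frac{\gamma\omega L_{\max}}{n}$, $\frac{\gamma L}{\alpha}$, and $\frac{\gamma^2\omega\widehat{L}^2}{n\alpha}$ are small constants, exactly as in the DIANA proof. Finally $\gamma\leq\frac{\alpha}{100L}\leq\frac{\alpha}{4\mu}$ upgrades $1-\frac{\alpha}{8}$ to $1-\frac{\gamma\mu}{2}$ on the $w$-term, and taking full expectation via the tower property \eqref{eq:tower} yields \eqref{eq:main_dcgd}.

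The main obstacle is bookkeeping rather than conceptual: without the $h_i^t$ shifts there is no telescoping potential to absorb $\norm{\nabla f_i(w^t)}^2$, so one is stuck with the non-vanishing $\norm{\nabla f_i(x^*)}^2$ residual, and the delicate point is simultaneously (i) choosing $\nu$ to close the $w$-contraction, (ii) keeping the $f(x^t)-f(x^*)$ coefficient below $\frac{1}{2}$, and (iii) keeping the residual coefficient below $\frac{4\gamma\omega}{n}$, all tied to the same stepsize budget. I expect the verifications in (ii)--(iii) to be the most error-prone, though strictly easier than in Theorem~\ref{theorem:general_diana}, since dropping the $\kappa\,4\beta L_{\max}$ contribution only shrinks every coefficient.
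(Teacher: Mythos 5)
Your proposal is correct and follows essentially the same route as the paper: the paper likewise treats \algname{EF21-P + DCGD} as \algname{EF21-P + DIANA} with $\beta=0$ and $h_i^t\equiv 0$, reuses the DIANA proof up to its master inequality \eqref{eq:kappa_nu}, sets $\kappa=0$, picks $\nu=\frac{32\gamma\omega\widehat{L}^2}{n\alpha}+\frac{16L}{\alpha}$ to close the $w$-contraction, and verifies with the stepsize caps that the $f(x^t)-f(x^*)$ coefficient stays below $\tfrac12$ and the residual coefficient below $\frac{4\gamma\omega}{n}$, with the constant term $\frac{1}{n}\sum_i\norm{\nabla f_i(x^*)}^2$ arising exactly as your ``irreducible residual.'' The only negligible discrepancy is your mention of ``four stepsize caps'' (this theorem has three; the bound $\gamma\le\frac{\alpha}{4\mu}$ follows from $\gamma\le\frac{\alpha}{100L}$ and $\mu\le L$), which does not affect the argument.
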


 \begin{proof}
    Note that \algname{EF21-P + DCGD} is \algname{EF21-P + DIANA} with $\beta = 0$ and $h^{t}_i = 0$ for all $i \in [n]$ and $t \geq 0.$ Up to \eqref{eq:kappa_nu}, we can reuse the proof of Theorem~\ref{theorem:general_diana} and obtain
    \begin{align*}
        \frac{1}{2\gamma}&\ExpSub{t+1}{\norm{x^{t+1} - x^*}^2} + \ExpSub{t+1}{f(x^{t+1}) - f(x^*)} \\
        &\qquad + \kappa \frac{1}{n}\sum_{i=1}^n \ExpSub{t+1}{\norm{h^{t+1}_i - \nabla f_i(x^*)}^2} + \nu \ExpSub{t+1}{\norm{w^{t+1}-x^{t+1}}^2} \nonumber \\
        &\leq \frac{1}{2\gamma}\left(1 - \frac{\gamma \mu}{2}\right)\norm{x^t - x^*}^2 \nonumber \\
        &\quad+ \left(\frac{8\gamma\omega L_{\max}}{n} + \kappa 4\beta L_{\max} + \nu \left(\frac{8\gamma^2\omega L_{\max}}{n} + \frac{8 \gamma^2 L}{\alpha}\right)\right)\left(f(x^{t}) - f(x^{*})\right) \nonumber\\
        &\quad + \left(\frac{2 \gamma \omega}{n} + \nu \frac{2 \gamma^2 \omega}{n} + \kappa \left(1 - \beta\right)\right)\frac{1}{n}\sum_{i=1}^n \norm{h^t_i - \nabla f_i(x^*)}^2 \nonumber\\
        &\quad + \left(\frac{4\gamma\omega \widehat{L}^2}{n} + 2L + \kappa 2\beta \widehat{L}^2 + \nu \left(1-\frac{\alpha}{4}\right)\right)\norm{w^t - x^t}^2.
     \end{align*}
     Due to $\beta = 0,$ we have
     \begin{align*}
        &\frac{1}{2\gamma}\ExpSub{t+1}{\norm{x^{t+1} - x^*}^2} + \ExpSub{t+1}{f(x^{t+1}) - f(x^*)} \\
        &\qquad + \kappa \frac{1}{n}\sum_{i=1}^n \ExpSub{t+1}{\norm{h^{t+1}_i - \nabla f_i(x^*)}^2} + \nu \ExpSub{t+1}{\norm{w^{t+1}-x^{t+1}}^2} \nonumber \\
        &\leq \frac{1}{2\gamma}\left(1 - \frac{\gamma \mu}{2}\right)\norm{x^t - x^*}^2 \nonumber \\
        &\quad+ \left(\frac{8\gamma\omega L_{\max}}{n} + \nu \left(\frac{8\gamma^2\omega L_{\max}}{n} + \frac{8 \gamma^2 L}{\alpha}\right)\right)\left(f(x^{t}) - f(x^{*})\right) \nonumber\\
        &\quad + \left(\frac{2 \gamma \omega}{n} + \nu \frac{2 \gamma^2 \omega}{n} + \kappa \right)\frac{1}{n}\sum_{i=1}^n \norm{h^t_i - \nabla f_i(x^*)}^2 \nonumber\\
        &\quad + \left(\frac{4\gamma\omega \widehat{L}^2}{n} + 2L + \nu \left(1-\frac{\alpha}{4}\right)\right)\norm{w^t - x^t}^2.
     \end{align*}
     Taking $\kappa = 0$ and $\nu = \frac{32\gamma\omega \widehat{L}^2}{\alpha n} + \frac{16L}{\alpha}$, we obtain
     \begin{align*}
        \frac{1}{2\gamma}&\ExpSub{t+1}{\norm{x^{t+1} - x^*}^2} + \ExpSub{t+1}{f(x^{t+1}) - f(x^*)} + \nu \ExpSub{t+1}{\norm{w^{t+1}-x^{t+1}}^2} \nonumber \\
        &\leq \frac{1}{2\gamma}\left(1 - \frac{\gamma \mu}{2}\right)\norm{x^t - x^*}^2 + \nu \left(1-\frac{\alpha}{8}\right) \norm{w^t - x^t}^2 \\
        &\qquad+ \left(\frac{8\gamma\omega L_{\max}}{n} + \left(\frac{32\gamma\omega \widehat{L}^2}{\alpha n} + \frac{16L}{\alpha}\right) \left(\frac{8\gamma^2\omega L_{\max}}{n} + \frac{8 \gamma^2 L}{\alpha}\right)\right)\left(f(x^{t}) - f(x^{*})\right) \nonumber\\
        &\qquad + \left(\frac{2 \gamma \omega}{n} + \nu \frac{2 \gamma^2 \omega}{n} \right)\frac{1}{n}\sum_{i=1}^n \norm{h^t_i - \nabla f_i(x^*)}^2 \\
        &= \frac{1}{2\gamma}\left(1 - \frac{\gamma \mu}{2}\right)\norm{x^t - x^*}^2 + \nu \left(1-\frac{\alpha}{8}\right) \norm{w^t - x^t}^2 \\
        &\qquad+ \left(\frac{8\gamma\omega L_{\max}}{n} +\frac{256\gamma^3\omega^2 \widehat{L}^2 L_{\max}}{n^2 \alpha} + \frac{256\gamma^3\omega L \widehat{L}^2}{n \alpha^2} + \frac{128 \gamma^2 \omega L L_{\max}}{n \alpha} + \frac{128 \gamma^2 L^2}{\alpha^2}\right)\left(f(x^{t}) - f(x^{*})\right) \nonumber\\
        &\qquad + \left(\frac{2 \gamma \omega}{n} + \nu \frac{2 \gamma^2 \omega}{n} \right)\frac{1}{n}\sum_{i=1}^n \norm{h^t_i - \nabla f_i(x^*)}^2.
     \end{align*}
     Using the assumptions on $\gamma,$ we have
    \begin{align*}
        &\frac{8\gamma\omega L_{\max}}{n} \leq \frac{1}{10}, \\
        &\frac{256\gamma^3\omega^2 \widehat{L}^2 L_{\max}}{n^2 \alpha} \leq \frac{20 \gamma^2\omega \widehat{L}^2}{n \alpha} \leq \frac{1}{10}, \\
        &\frac{128 \gamma^2 \omega L L_{\max}}{n \alpha} \leq \frac{4 \gamma L}{\alpha} \leq \frac{1}{10}, \\
        &\frac{256\gamma^3\omega L \widehat{L}^2}{n \alpha^2} \leq \frac{40\gamma^2\omega \widehat{L}^2}{n \alpha} \leq \frac{1}{10}, \\
        &\frac{128 \gamma^2 L^2}{\alpha^2} \leq \frac{1}{10}.
    \end{align*}
    Considering $\gamma \leq \frac{\alpha}{4\mu},$ we obtain
 \begin{align*}
    \frac{1}{2\gamma}&\ExpSub{t+1}{\norm{x^{t+1} - x^*}^2} + \ExpSub{t+1}{f(x^{t+1}) - f(x^*)} + \nu \ExpSub{t+1}{\norm{w^{t+1}-x^{t+1}}^2} \nonumber \\
    &\leq \frac{1}{2\gamma}\left(1 - \frac{\gamma \mu}{2}\right)\norm{x^t - x^*}^2 + \frac{1}{2}\left(f(x^{t}) - f(x^{*})\right) + \nu \left(1-\frac{\gamma \mu}{2}\right) \norm{w^t - x^t}^2 \\
        &\qquad + \left(\frac{2 \gamma \omega}{n} + \nu \frac{2 \gamma^2 \omega}{n} \right)\frac{1}{n}\sum_{i=1}^n \norm{h^t_i - \nabla f_i(x^*)}^2.
 \end{align*}
 From the assumptions on $\gamma,$ we have $$\frac{2 \gamma \omega}{n} + \nu \frac{2 \gamma^2 \omega}{n} \leq \frac{2 \gamma \omega}{n} + \left(\frac{32\gamma\omega \widehat{L}^2}{\alpha n} + \frac{16L}{\alpha}\right) \frac{2 \gamma^2 \omega}{n} \leq \frac{4 \gamma \omega}{n}$$
 and hence
 \begin{align*}
    \frac{1}{2\gamma}&\ExpSub{t+1}{\norm{x^{t+1} - x^*}^2} + \ExpSub{t+1}{f(x^{t+1}) - f(x^*)} + \nu \ExpSub{t+1}{\norm{w^{t+1}-x^{t+1}}^2} \nonumber \\
    &\leq \frac{1}{2\gamma}\left(1 - \frac{\gamma \mu}{2}\right)\norm{x^t - x^*}^2 + \frac{1}{2}\left(f(x^{t}) - f(x^{*})\right) + \nu \left(1-\frac{\gamma \mu}{2}\right) \norm{w^t - x^t}^2 \\
        &\qquad + \frac{4 \gamma \omega}{n}\frac{1}{n}\sum_{i=1}^n \norm{h^t_i - \nabla f_i(x^*)}^2.
 \end{align*}
 Taking the full expectation, we obtain
 \begin{align*}
    \frac{1}{2\gamma}&\Exp{\norm{x^{t+1} - x^*}^2} + \Exp{f(x^{t+1}) - f(x^*)} + \nu \Exp{\norm{w^{t+1}-x^{t+1}}^2} \nonumber \\
    &\leq \frac{1}{2\gamma}\left(1 - \frac{\gamma \mu}{2}\right)\Exp{\norm{x^t - x^*}^2} + \frac{1}{2}\Exp{f(x^{t}) - f(x^{*})} + \nu \left(1-\frac{\gamma \mu}{2}\right) \Exp{\norm{w^t - x^t}^2} \\
        &\qquad + \frac{4 \gamma \omega}{n}\frac{1}{n}\sum_{i=1}^n \Exp{\norm{h^t_i - \nabla f_i(x^*)}^2}.
 \end{align*}
 It remains to use \eqref{eq:h_t} with $\beta = 0$ to finish the proof of the theorem.

 In the proof, we have the requirement that
 \begin{align*}
    &\gamma \leq \min\left\{\frac{n}{160 \omega L_{\max}}, \frac{\sqrt{n \alpha}}{20\sqrt{\omega} \widehat{L}}, \frac{\alpha}{100 L}\right\}.
\end{align*}
 As in the proof of Theorem~\ref{theorem:general_diana}, using Lemma~\ref{lemma:lipt_constants}, we can simplify it to
 \begin{align*}
    &\gamma \leq \min\left\{\frac{n}{160 \omega L_{\max}}, \frac{\alpha}{100 L}\right\}.
\end{align*}
 \end{proof}

 \begin{theorem}
    \label{theorem:dcgd_general_convex}
    Let us assume that Assumptions~\ref{ass:lipschitz_constant}, \ref{ass:workers_lipschitz_constant} and \ref{ass:convex} hold, the strong convexity parameter satisfies $\mu = 0,$ $x^0 = w^0$ and
    \begin{align*}
       &\gamma \leq \min\left\{\frac{n}{160 \omega L_{\max}}, \frac{\alpha}{100 L}\right\}.
    \end{align*}
    Then Algorithm~\ref{algorithm:dcgd_ef21_p} guarantees that
    \begin{align*}
        f\left(\frac{1}{T} \sum_{t=1}^{T} x^{t}\right) - f(x^*) \leq \frac{1}{\gamma T}\norm{x^0 - x^*}^2 + \frac{f(x^{0}) - \nabla f(x^{*})}{T} + \frac{8 \gamma \omega}{n}\left(\frac{1}{n}\sum_{i=1}^n \norm{\nabla f_i(x^*)}^2\right).
    \end{align*}
 \end{theorem}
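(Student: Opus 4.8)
The plan is to mirror the proof of Theorem~\ref{theorem:diana_general_convex}, since \algname{EF21-P + DCGD} is simply the $\beta = 0$ specialization of \algname{EF21-P + DIANA} and the general convex regime corresponds to setting $\mu = 0$ in the one-step recursion already established in Theorem~\ref{theorem:general_dcgd}. First I would invoke inequality \eqref{eq:main_dcgd} and substitute $\mu = 0$, so that every contraction factor $\left(1 - \nicefrac{\gamma\mu}{2}\right)$ collapses to $1$. Writing $a_t \eqdef \Exp{f(x^t) - f(x^*)}$, which is nonnegative by convexity (Assumption~\ref{ass:convex}), and abbreviating the residual noise as $G \eqdef \frac{4\gamma\omega}{n}\left(\frac{1}{n}\sum_{i=1}^n \norm{\nabla f_i(x^*)}^2\right)$, the recursion reads
\[
\frac{1}{2\gamma}\Exp{\norm{x^{t+1}-x^*}^2} + a_{t+1} + \nu\Exp{\norm{w^{t+1}-x^{t+1}}^2} \le \frac{1}{2\gamma}\Exp{\norm{x^{t}-x^*}^2} + \tfrac{1}{2}a_t + \nu\Exp{\norm{w^{t}-x^{t}}^2} + G.
\]

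Next I would sum this inequality over $t = 0, \dots, T-1$. The quadratic terms $\frac{1}{2\gamma}\norm{x^t - x^*}^2$ and the model-shift terms $\nu\norm{w^t - x^t}^2$ telescope, leaving only endpoint contributions, while the constant $G$ accumulates to $TG$. The only delicate bookkeeping is with the function-value terms: the left side contributes $\sum_{t=1}^{T} a_t$, whereas the right side contributes only $\tfrac{1}{2}\sum_{t=0}^{T-1}a_t$; subtracting and using $a_t \ge 0$ to discard the terminal quadratic terms and $a_T$ itself leaves $\tfrac{1}{2}\sum_{t=1}^{T}a_t$ on the left. After also using $x^0 = w^0$ to kill the term $\nu\norm{w^0 - x^0}^2$, this yields
\[
\tfrac{1}{2}\sum_{t=1}^{T} a_t \le \frac{1}{2\gamma}\norm{x^0 - x^*}^2 + \tfrac{1}{2}\left(f(x^0) - f(x^*)\right) + T G.
\]
Finally, I would divide by $\tfrac{T}{2}$ and apply Jensen's inequality for the convex $f$ to the running average, $f\!\left(\frac{1}{T}\sum_{t=1}^{T}x^t\right) - f(x^*) \le \frac{1}{T}\sum_{t=1}^{T} a_t$. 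This produces the $\frac{1}{\gamma T}\norm{x^0 - x^*}^2$ and $\frac{f(x^0) - f(x^*)}{T}$ terms and turns the accumulated noise $TG$ into the advertised constant $2G = \frac{8\gamma\omega}{n}\left(\frac{1}{n}\sum_{i=1}^n \norm{\nabla f_i(x^*)}^2\right)$, completing the proof.

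I expect no serious obstacle here: the argument is a routine Lyapunov telescoping once \eqref{eq:main_dcgd} is in hand. The one point deserving care is the asymmetric coefficient $\tfrac{1}{2}$ on the function-value term of the recursion, which obstructs a clean telescope of $a_t$; the remedy is precisely the reindexing above, which exploits nonnegativity of $a_t$ to retain $\tfrac{1}{2}\sum_{t=1}^{T}a_t$ on the left. It is also worth emphasizing that, unlike the \algname{DIANA} variant, the noise $G$ does not decay in $t$ (there are no learned shifts driving $\norm{h_i^t - \nabla f_i(x^*)}^2$ to zero), so it persists as a non-vanishing neighborhood term unless $\nabla f_i(x^*) = 0$ for all $i$, consistent with the interpolation caveat discussed in Section~\ref{sec:interpol}.
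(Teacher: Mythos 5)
Your proposal is correct and follows essentially the same route as the paper's own proof: both invoke the one-step recursion \eqref{eq:main_dcgd} from Theorem~\ref{theorem:general_dcgd}, drop the contraction factors using $\mu = 0$, sum and telescope over $t = 0, \dots, T-1$ (using $x^0 = w^0$ and nonnegativity to handle the endpoint and the asymmetric $\tfrac{1}{2}$ coefficient on the function-value terms), and finish with Jensen's inequality. Your bookkeeping of the $\tfrac{1}{2}\sum_{t=1}^{T}a_t$ term and the factor $2G$ matches the paper exactly.
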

 
 \begin{proof}
    Let us bound \eqref{eq:main_dcgd}:
    \begin{align*}
        &\frac{1}{2\gamma}\Exp{\norm{x^{t+1} - x^*}^2} + \Exp{f(x^{t+1}) - f(x^*)} + \nu \Exp{\norm{w^{t+1}-x^{t+1}}^2} \nonumber \\
        &\leq \frac{1}{2\gamma}\left(1 - \frac{\gamma \mu}{2}\right)\Exp{\norm{x^t - x^*}^2} + \frac{1}{2}\Exp{f(x^{t}) - f(x^{*})} + \nu \left(1-\frac{\gamma \mu}{2}\right) \Exp{\norm{w^t - x^t}^2} \\
            &\quad + \frac{4 \gamma \omega}{n}\left(\frac{1}{n}\sum_{i=1}^n \norm{\nabla f_i(x^*)}^2\right) \\
       &\leq \frac{1}{2\gamma}\Exp{\norm{x^t - x^*}^2} + \frac{1}{2}\Exp{f(x^{t}) - f(x^{*})} + \nu \Exp{\norm{w^t - x^t}^2} + \frac{4 \gamma \omega}{n}\left(\frac{1}{n}\sum_{i=1}^n \norm{\nabla f_i(x^*)}^2\right).
    \end{align*}
    We now sum the inequality for $t \in \{0, \dots, T-1\}$ and obtain
    \begin{align*}
       &\frac{1}{2\gamma}\Exp{\norm{x^{T} - x^*}^2} + \frac{1}{2}\Exp{f(x^{T}) - f(x^*)} + \frac{1}{2} \sum_{t=1}^{T}\Exp{f(x^{t}) - f(x^*)} + \nu \Exp{\norm{w^{T}-x^{T}}^2} \\
       &\leq \frac{1}{2\gamma}\norm{x^0 - x^*}^2 + \frac{1}{2}\left(f(x^{0}) - f(x^{*})\right) + \nu \norm{w^0 - x^0}^2 + T \frac{4 \gamma \omega}{n}\left(\frac{1}{n}\sum_{i=1}^n \norm{\nabla f_i(x^*)}^2\right) \\
       &= \frac{1}{2\gamma}\norm{x^0 - x^*}^2 + \frac{1}{2}\left(f(x^{0}) - f(x^{*})\right) + T \frac{4 \gamma \omega}{n}\left(\frac{1}{n}\sum_{i=1}^n \norm{\nabla f_i(x^*)}^2\right).
    \end{align*}
    where we used the assumption $x^0 = w^0.$ Non-negativity of the terms and convexity gives
    \begin{align*}
       f\left(\frac{1}{T} \sum_{t=1}^{T} x^{t}\right) - f(x^*) \leq \frac{1}{\gamma T}\norm{x^0 - x^*}^2 + \frac{f(x^{0}) - \nabla f(x^{*})}{T} + \frac{8 \gamma \omega}{n}\left(\frac{1}{n}\sum_{i=1}^n \norm{\nabla f_i(x^*)}^2\right).
    \end{align*}
 \end{proof}
 
 \begin{theorem}
    \label{theorem:dcgd_strong}
    Let us assume that Assumptions~\ref{ass:lipschitz_constant}, \ref{ass:workers_lipschitz_constant} and \ref{ass:convex} hold, $x^0 = w^0,$ and
    \begin{align*}
       &\gamma \leq \min\left\{\frac{n}{160 \omega L_{\max}}, \frac{\alpha}{100 L}\right\}.
    \end{align*}
    Then Algorithm~\ref{algorithm:dcgd_ef21_p} guarantees that
    \begin{align*}
        &\frac{1}{2\gamma}\Exp{\norm{x^{T} - x^*}^2} + \Exp{f(x^{T}) - f(x^*)} \\
        &\qquad\leq \left(1 - \frac{\gamma \mu}{2}\right)^T\left(\frac{1}{2\gamma}\Exp{\norm{x^0 - x^*}^2} + \left(f(x^{0}) - f(x^{*})\right)\right) + \frac{8 \omega}{n \mu}\left(\frac{1}{n}\sum_{i=1}^n \norm{\nabla f_i(x^*)}^2\right).
     \end{align*}
 \end{theorem}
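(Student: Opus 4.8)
The plan is to derive the result directly from the one-step descent inequality \eqref{eq:main_dcgd} established in Theorem~\ref{theorem:general_dcgd}, whose hypotheses on $\gamma$ coincide exactly with those assumed here. Introduce the Lyapunov function $\Psi^t \eqdef \frac{1}{2\gamma}\norm{x^t-x^*}^2 + \left(f(x^t)-f(x^*)\right) + \nu\norm{w^t-x^t}^2$, where $\nu \leq \frac{32\gamma\omega\widehat{L}^2}{n\alpha} + \frac{16L}{\alpha}$ is the constant supplied by that theorem. The first step is to fold the stray $\frac{1}{2}\Exp{f(x^t)-f(x^*)}$ term on the right-hand side of \eqref{eq:main_dcgd} into a contraction factor. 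Since $\gamma \leq \frac{\alpha}{100 L} \leq \frac{1}{\mu}$ (using $\alpha \leq 1$ and $\mu \leq L$), we have $\frac{\gamma\mu}{2} \leq \frac{1}{2}$, so $1 - \frac{\gamma\mu}{2} \geq \frac{1}{2}$ and hence $\frac{1}{2}\Exp{f(x^t)-f(x^*)} \leq \left(1-\frac{\gamma\mu}{2}\right)\Exp{f(x^t)-f(x^*)}$.

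With this replacement, \eqref{eq:main_dcgd} becomes the clean recursion
$$\Exp{\Psi^{t+1}} \leq \left(1-\frac{\gamma\mu}{2}\right)\Exp{\Psi^t} + E, \qquad E \eqdef \frac{4\gamma\omega}{n}\left(\frac{1}{n}\sum_{i=1}^n\norm{\nabla f_i(x^*)}^2\right),$$
mirroring the structure of the strongly convex \algname{DIANA} argument (Theorem~\ref{theorem:diana_strong}) except for the additive constant $E$, which is the price paid for not learning the gradient shifts.

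Unrolling this recursion over $t = 0,\ldots,T-1$ gives $\Exp{\Psi^T} \leq \left(1-\frac{\gamma\mu}{2}\right)^T\Psi^0 + E\sum_{i=0}^{T-1}\left(1-\frac{\gamma\mu}{2}\right)^i$, and bounding the geometric sum by its infinite counterpart $\sum_{i=0}^{\infty}\left(1-\frac{\gamma\mu}{2}\right)^i = \frac{2}{\gamma\mu}$ yields $\Exp{\Psi^T} \leq \left(1-\frac{\gamma\mu}{2}\right)^T\Psi^0 + \frac{2E}{\gamma\mu}$. Substituting the value of $E$ collapses the additive term to $\frac{2}{\gamma\mu}\cdot\frac{4\gamma\omega}{n}\cdot\frac{1}{n}\sum_{i=1}^n\norm{\nabla f_i(x^*)}^2 = \frac{8\omega}{n\mu}\left(\frac{1}{n}\sum_{i=1}^n\norm{\nabla f_i(x^*)}^2\right)$, which is precisely the neighborhood term in the statement.

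Finally, to pass from $\Psi^T$ to the quantity appearing in the theorem, I drop the nonnegative term $\nu\Exp{\norm{w^T-x^T}^2}$ on the left and invoke the initialization $w^0 = x^0$ to annihilate $\nu\norm{w^0-x^0}^2$ inside $\Psi^0$, leaving $\Psi^0 = \frac{1}{2\gamma}\Exp{\norm{x^0-x^*}^2} + \left(f(x^0)-f(x^*)\right)$. There is no genuine obstacle here: the substantive work---the Lyapunov construction, the compressor-variance bounds, and the calibration of $\nu$---has already been discharged in Theorem~\ref{theorem:general_dcgd}. The only two points that require care are verifying $\frac{\gamma\mu}{2}\leq\frac{1}{2}$ so that the function-value term can be absorbed into the contraction, and summing the finite geometric series via the bound $\frac{2}{\gamma\mu}$.
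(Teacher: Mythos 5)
Your proposal is correct and follows essentially the same route as the paper's own proof: both start from inequality \eqref{eq:main_dcgd} of Theorem~\ref{theorem:general_dcgd}, absorb the $\frac{1}{2}\Exp{f(x^t)-f(x^*)}$ term into the contraction factor via $\gamma\mu \leq 1$, unroll the resulting recursion, bound the finite geometric sum by $\frac{2}{\gamma\mu}$ to obtain the $\frac{8\omega}{n\mu}$ neighborhood term, and finish by invoking $x^0 = w^0$ and dropping the nonnegative $\nu\Exp{\norm{w^T - x^T}^2}$ term.
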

 
 \begin{proof}
    Using $\gamma \leq \frac{\alpha}{100 L} \leq \frac{1}{\mu},$ let us bound \eqref{eq:main_dcgd}:
    \begin{align*}
       &\frac{1}{2\gamma}\Exp{\norm{x^{t+1} - x^*}^2} + \Exp{f(x^{t+1}) - f(x^*)} + \nu \Exp{\norm{w^{t+1}-x^{t+1}}^2} \nonumber \\
       &\leq \frac{1}{2\gamma}\left(1 - \frac{\gamma \mu}{2}\right)\Exp{\norm{x^t - x^*}^2} + \frac{1}{2}\Exp{f(x^{t}) - f(x^{*})} + \nu \left(1-\frac{\gamma \mu}{2}\right) \Exp{\norm{w^t - x^t}^2} \\
           &\quad + \frac{4 \gamma \omega}{n}\left(\frac{1}{n}\sum_{i=1}^n \norm{\nabla f_i(x^*)}^2\right) \\
           &\leq \frac{1}{2\gamma}\left(1 - \frac{\gamma \mu}{2}\right)\Exp{\norm{x^t - x^*}^2} + \left(1 - \frac{\gamma \mu}{2}\right)\Exp{f(x^{t}) - f(x^{*})} + \nu \left(1-\frac{\gamma \mu}{2}\right) \Exp{\norm{w^t - x^t}^2} \\
           &\quad + \frac{4 \gamma \omega}{n}\left(\frac{1}{n}\sum_{i=1}^n \norm{\nabla f_i(x^*)}^2\right) \\
       &= \left(1 - \frac{\gamma \mu}{2}\right)\left(\frac{1}{2\gamma}\Exp{\norm{x^t - x^*}^2} + \Exp{f(x^{t}) - f(x^{*})} + \nu \Exp{\norm{w^t - x^t}^2}\right) \\
       &\quad+\frac{4 \gamma \omega}{n}\left(\frac{1}{n}\sum_{i=1}^n \norm{\nabla f_i(x^*)}^2\right).
    \end{align*}
    Recursively applying the last inequality and using $x^0 = w^0,$ one obtains
    \begin{align*}
       \frac{1}{2\gamma}&\Exp{\norm{x^{T} - x^*}^2} + \Exp{f(x^{T}) - f(x^*)} + \nu \Exp{\norm{w^{T}-x^{T}}^2} \\
       &\leq \left(1 - \frac{\gamma \mu}{2}\right)^T\left(\frac{1}{2\gamma}\Exp{\norm{x^0 - x^*}^2} + \left(f(x^{0}) - f(x^{*})\right)\right) \\
       &\qquad+ \sum_{i=0}^{T-1}\left(1 - \frac{\gamma \mu}{2}\right)^i\frac{4 \gamma \omega}{n}\left(\frac{1}{n}\sum_{i=1}^n \norm{\nabla f_i(x^*)}^2\right) \\
       &\leq \left(1 - \frac{\gamma \mu}{2}\right)^T\left(\frac{1}{2\gamma}\Exp{\norm{x^0 - x^*}^2} + \left(f(x^{0}) - f(x^{*})\right)\right) \\
       &\qquad+ \sum_{i=0}^{\infty}\left(1 - \frac{\gamma \mu}{2}\right)^i\frac{4 \gamma \omega}{n}\left(\frac{1}{n}\sum_{i=1}^n \norm{\nabla f_i(x^*)}^2\right) \\
       &= \left(1 - \frac{\gamma \mu}{2}\right)^T\left(\frac{1}{2\gamma}\Exp{\norm{x^0 - x^*}^2} + \left(f(x^{0}) - f(x^{*})\right)\right) + \frac{8 \omega}{n \mu}\left(\frac{1}{n}\sum_{i=1}^n \norm{\nabla f_i(x^*)}^2\right) \\
    \end{align*}
    Non-negativity of $\Exp{\norm{w^{T}-x^{T}}^2}$ gives
    \begin{align*}
       &\frac{1}{2\gamma}\Exp{\norm{x^{T} - x^*}^2} + \Exp{f(x^{T}) - f(x^*)} \\
       &\leq \left(1 - \frac{\gamma \mu}{2}\right)^T\left(\frac{1}{2\gamma}\Exp{\norm{x^0 - x^*}^2} + \left(f(x^{0}) - f(x^{*})\right)\right) + \frac{8 \omega}{n \mu}\left(\frac{1}{n}\sum_{i=1}^n \norm{\nabla f_i(x^*)}^2\right).
    \end{align*}
 \end{proof}

\end{document}